\newtheorem{theorem}{Theorem}[section]
\newtheorem{lemma}[theorem]{Lemma}
\newtheorem{proposition}[theorem]{Proposition}
\theoremstyle{definition}
\theoremstyle{remark}
\newtheorem{remark}[theorem]{Remark}
\numberwithin{equation}{section}
\def\dint{\mathop{\displaystyle \int}}%
\def\func#1{\mathop{\rm #1}}
\def\dfrac#1#2{{\displaystyle {#1 \over #2}}}
\def\XXint#1#2#3{{\setbox0=\hbox{$#1{#2#3}{\int}$}
     \vcenter{\hbox{$#2#3$}}\kern-.5\wd0}}
\begin{document}
\date{June 7, 2009}
\title[The Kato problem]{The solution of the Kato problem for degenerate
elliptic operators with Gaussian bounds}
\author{David Cruz-Uribe, SFO}
\address{Trinity College, Hartford, CT, USA}
\email{david.cruzuribe@trincoll.edu}
\author{Cristian Rios}
\address{University of Calgary, Calgary, AB, Canada}
\email{crios@math.ucalgary.ca}
\subjclass{Primary 35J70, 35C15, 47D06 ; Secondary  47N20, 35K65}

\begin{abstract}
We prove the Kato conjecture for degenerate elliptic operators on ${\mathbb{R%
}^n}$. More precisely, we consider the divergence form operator ${\mathcal{L}%
}_w=-w^{-1} {\mathrm{div}}\mathbf{A}{\nabla} $, where $w$ is a Muckenhoupt $%
A_{2}$ weight and $\mathbf{A}$ is a complex-valued $n\times n$ matrix such
that $w^{-1}\mathbf{A}$ is bounded and uniformly elliptic. We show that if
the heat kernel of the associated semigroup $e^{-t{\mathcal{L}}_w}$
satisfies Gaussian bounds, then the weighted Kato square root estimate, $\|{%
\mathcal{L}}_w^{1/2} f\| _{L^{2}\left( w\right) }\approx \| {\nabla} f\|
_{L^{2}\left( w\right) }$, holds.
\end{abstract}

\thanks{The authors would like to thank Steven Hofmann for his suggestion
that we work on such a beautiful problem.}
\maketitle

\section{Introduction}

The purpose of this work is to give a positive answer to the Kato square
root problem for a class of degenerate elliptic operators, under the
assumption that the associated heat kernel satisfies classic Gaussian upper
bounds.

Before stating our results, we briefly sketch the background. Given a
uniformly elliptic, $n\times n$ complex matrix $\mathbf{A}$, define the
second-order elliptic operator ${\mathcal{L}}=-{\mathrm{div}}\mathbf{A}{%
\nabla }$. Then the square root ${\mathcal{L}}^{1/2}$ can be defined using
the functional calculus. The original Kato problem was to show that for all $%
f$ in the Sobolev space $H^{1}$, $\Vert {\mathcal{L}}^{1/2}f\Vert
_{L^{2}}\approx \Vert {\nabla }f\Vert _{L^{2}}$. This was first posed by
Kato~\cite{kato61} in 1961, but only solved in the past decade in a series
of remarkable papers by Auscher, \emph{et al.}~\cite%
{auscher-hofmann-lacey-mcintosh-tchamitchian02,
auscher-hofmann-lewis-tchamitchian01, hofmann-lacey-mcintosh02}. Initially,
they solved the problem given the additional assumption that the heat kernel
of the semigroup $e^{-t{\mathcal{L}}_{w}}$ satisfied Gaussian bounds. Such
estimates were known to be true in the case $\mathbf{A}$ was real symmetric,
but it had been shown that they need not hold for complex matrices in higher
dimensions~\cite{auscher-coulhon-tchamitchian96}. The final proof omitted
this hypothesis. For a more complete history of this problem, we refer the
reader to the above papers or to the review by Kenig~\cite{kenig04}.

We have extended this result to the case of degenerate elliptic operators,
where the degeneracy is controlled by a weight in the Muckenhoupt class $A_2$%
. We say that a weight $w$ (i.e., a non-negative, locally integrable
function) is in $A_2$ if 
\begin{equation*}
\sup_Q \left( 
\mathchoice {{\setbox0=\hbox{$\displaystyle{\textstyle
-}{\int}$} \vcenter{\hbox{$\textstyle -$}}\kern-.5\wd0}} {{\setbox0=\hbox{$\textstyle{\scriptstyle -}{\int}$} \vcenter{\hbox{$\scriptstyle -$}}\kern-.5\wd0}} {{\setbox0=\hbox{$\scriptstyle{\scriptscriptstyle -}{\int}$} \vcenter{\hbox{$\scriptscriptstyle -$}}\kern-.5\wd0}} {{\setbox0=\hbox{$\scriptscriptstyle{\scriptscriptstyle -}{\int}$} \vcenter{\hbox{$\scriptscriptstyle -$}}\kern-.5\wd0}}
\!\int_{Q}w\left( x\right) dx\right) \left( \mathchoice
{{\setbox0=\hbox{$\displaystyle{\textstyle -}{\int}$}
\vcenter{\hbox{$\textstyle -$}}\kern-.5\wd0}}
{{\setbox0=\hbox{$\textstyle{\scriptstyle -}{\int}$}
\vcenter{\hbox{$\scriptstyle -$}}\kern-.5\wd0}}
{{\setbox0=\hbox{$\scriptstyle{\scriptscriptstyle -}{\int}$}
\vcenter{\hbox{$\scriptscriptstyle -$}}\kern-.5\wd0}}
{{\setbox0=\hbox{$\scriptscriptstyle{\scriptscriptstyle -}{\int}$}
\vcenter{\hbox{$\scriptscriptstyle -$}}\kern-.5\wd0}} \!\int_{Q}w\left(
x\right) dx\right) =\left[ w\right] _{A_{2}}^{2}<\infty ,
\end{equation*}%
where the supremum is taken over all cubes $Q\subset \mathbb{R}^{n}$. Given $%
w\in A_2$ and constants $\lambda,\,\Lambda$, $0<\lambda\leq \Lambda<\infty$,
let ${\mathcal{E}}_n(w, \lambda, \Lambda )$ denote the class of $n\times n$
matrices $\mathbf{A}=\left( A_{ij}\left( x\right) \right) _{i,j=1}^{n}$ of
complex-valued, measurable functions satisfying the degenerate ellipticity
condition 
\begin{equation}  \label{eqn:degen}
\begin{cases}
\lambda w\left( x\right) | \xi | ^{2}\leq {\mathrm{Re}}\left\langle \mathbf{A%
}\xi ,\xi \right\rangle =\mathrm{{\mathrm{Re}}}{\sum_{i,j=1}^{n}}%
A_{ij}\left( x\right) \xi _{j}\bar{\xi}_{i}, \\ 
|\left\langle \mathbf{A}\xi ,\eta \right\rangle |\leq \Lambda w(x)|\xi
||\eta |%
\end{cases}%
\end{equation}%
for all $\xi ,\,\eta \in \mathbb{C}^{n}$.

Given $\mathbf{A}\in {\mathcal{E}}_n(w,\lambda,\Lambda)$, define the
degenerate elliptic operator in divergence form ${\mathcal{L}}_{w}=-w^{-1}{{%
\mathrm{div}}}\mathbf{A}{\nabla}$. Such operators were first considered by
Fabes, Kenig and Serapioni~\cite{fabes-kenig-serapioni82} and have been
considered by a number of other authors since. (See, for example, \cite%
{chen02,chen03,chen03b,chiarenza-franciosi87,
franchi91,kurata-sugano00,MR2382861}.) It is a natural question to extend
the Kato problem to these operators: that is, to show that 
\begin{equation*}
\|{\mathcal{L}}_w^{1/2}\|_{L^2(w)} \approx \|{\nabla} f\|_{L^2(w)}
\end{equation*}
for all $f$ in the weighted Sobolev space $H_0^1(w)$. (Exact definitions
will be given below.) We consider this problem in the special case that the
heat kernel of the semigroup $e^{-t{\mathcal{L}}_w}$ satisfies Gaussian
bounds. More precisely, we assume there exists a heat kernel $W_{t}(x,y)$
associated to the operator $e^{-t{\mathcal{L}}_{w}}$ such that for all $f\in
C_{c}^{\infty }$, 
\begin{equation}
e^{-t{\mathcal{L}}_{w}}f(x)=\int_{{\mathbb{R}^{n}}}W_{t}(x,y)f(y)\,dy.
\label{eqn:gauss1}
\end{equation}%
Furthermore, for all $t>0$ and $x,\,y\in {\mathbb{R}^{n}}$, the kernel $W_t $
satisfies the Gaussian bounds 
\begin{equation}
|W_{t}(x,y)|\leq \frac{C_{1}}{t^{n/2}}\exp \left( -C_{2}\frac{|x-y|^{2}}{t}%
\right) ,  \label{ineq:gauss2}
\end{equation}%
and the H\"{o}lder continuity estimates 
\begin{multline}  \label{ineq:gauss3}
| W_{t}\left( x+h,y\right) -W_{t}\left( x,y\right) | +| W_{t}\left(
x,y+h\right) -W_{t}\left( x,y\right) | \\
\leq \frac{C_{1}}{t^{n/2}}\left( \frac{| h| }{t^{1/2}+| x-y| }\right) ^{\mu
}\exp \left( -C_{2}\frac{| x-y| ^{2}}{t}\right) ,
\end{multline}
where $h\in \mathbb{R}^{n}$ is such that $2| h| \leq t^{1/2}+| x-y| $. The
constants $C_{1}$, $C_{2}$ and $\mu $ depend only on $n$, $w$, $\lambda $,
and $\Lambda $. If these three properties hold we will say that $e^{-t{%
\mathcal{L}}_w}$ satisfies Condition (G).

\medskip

Our main result is the following theorem.

\begin{theorem}
\label{theorem:WKato} Given $w\in A_2$ and $\mathbf{A}\in{\mathcal{E}}%
_n(w,\lambda,\Lambda)$, suppose that $e^{-t{\mathcal{L}}_w}$ satisfies
Condition (G). Then there exists a positive constant $C=C\left( n,\lambda
,\Lambda, w\right)$ such that for all $f$ in $H_{0}^{1}(w)$, 
\begin{equation}  \label{eqn:kato}
\| {\nabla} f\| _{L^2(w)}C^{-1}\leq \|{\mathcal{L}}_w^{1/2}f\| _{L^2(w)}\leq
C\| {\nabla} f\| _{L^2(w)}.
\end{equation}
\end{theorem}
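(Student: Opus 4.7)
The plan is to follow the architecture of the Hofmann--Lacey--McIntosh and Auscher--Hofmann--Lewis--Tchamitchian proofs of the Kato conjecture in the presence of Gaussian bounds, carried out on the space of homogeneous type $({\mathbb R}^{n},dw,|\cdot|)$; this is a legitimate setting because $w\in A_{2}$ is doubling. The adjoint operator ${\mathcal L}_{w}^{*}=-w^{-1}{\mathrm{div}}\,{\mathbf A}^{*}\nabla$ lies in the same class ${\mathcal E}_{n}(w,\lambda,\Lambda)$ and inherits Condition (G) (its kernel is $\overline{W_{t}(y,x)}$), so via the sesquilinear identity $\mathfrak{a}(f,g)=\langle {\mathcal L}_{w}^{1/2}f,({\mathcal L}_{w}^{*})^{1/2}g\rangle_{L^{2}(w)}$ together with the ellipticity (\ref{eqn:degen}) it suffices to prove the upper bound $\|{\mathcal L}_{w}^{1/2}f\|_{L^{2}(w)}\leq C\|\nabla f\|_{L^{2}(w)}$; the reverse inequality drops out by applying the upper bound to ${\mathcal L}_{w}^{*}$ and dualizing.

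By McIntosh's quadratic estimate for maximal accretive operators with a bounded $H^{\infty }$ functional calculus on $L^{2}(w)$, this upper bound is equivalent to the square function inequality
\[
\int_{0}^{\infty }\|t{\mathcal L}_{w}e^{-t^{2}{\mathcal L}_{w}}f\|_{L^{2}(w)}^{2}\,\frac{dt}{t}\leq C\|\nabla f\|_{L^{2}(w)}^{2}.
\]
Integration by parts gives $t{\mathcal L}_{w}e^{-t^{2}{\mathcal L}_{w}}f=-tw^{-1}{\mathrm{div}}\,{\mathbf A}\nabla e^{-t^{2}{\mathcal L}_{w}}f$, so one decomposes the integrand as $\Theta_{t}(\nabla f)+R_{t}f$, where $\Theta_{t}$ is a kernel operator on vector fields and $R_{t}$ is a commutator-type remainder. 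The remainder is controlled by Gaffney--Davies off-diagonal estimates for $e^{-t^{2}{\mathcal L}_{w}}$ and $\nabla e^{-t^{2}{\mathcal L}_{w}}$, which follow from Condition (G) together with weighted Caccioppoli inequalities furnished by (\ref{eqn:degen}).

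The main estimate is thus $\int_{0}^{\infty }\|\Theta_{t}(\nabla f)\|_{L^{2}(w)}^{2}\frac{dt}{t}\leq C\|\nabla f\|_{L^{2}(w)}^{2}$, which one proves via a $Tb$-theorem for square functions on $({\mathbb R}^{n},dw)$. For each cube $Q$ and each unit direction $\xi \in{\mathbb C}^{n}$ one constructs a local test function of the form
\[
f_{Q}^{\xi}=\bigl(I+\epsilon^{2}\ell(Q)^{2}{\mathcal L}_{w}\bigr)^{-1}\bigl(\eta_{Q}(\xi \cdot x)\bigr),
\]
with $\eta_{Q}$ a smooth cutoff adapted to a fixed dilate of $Q$ and $\epsilon$ small. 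The Gaussian kernel bounds together with the weighted Poincar\'e inequality of Fabes--Kenig--Serapioni yield both an accretivity estimate ${\mathrm{Re}}\,\langle \xi,\aver{Q}\nabla f_{Q}^{\xi}\,dw\rangle \geq c>0$ and a size bound $\int_{Q}|\nabla f_{Q}^{\xi}|^{2}\,dw\leq Cw(Q)$. Running $\xi$ over a finite $\epsilon$-net on the complex sphere, a stopping-time/John--Nirenberg argument then reduces the square function inequality to the Carleson estimate
\[
\int_{0}^{\ell(Q)}\int_{Q}|\gamma_{t}(x)|^{2}\,\frac{dw(x)\,dt}{t}\leq Cw(Q),
\]
where $\gamma_{t}(x)$ is the matrix whose columns are $\Theta_{t}$ applied to the coordinate vector fields.

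The principal obstacle is this Carleson estimate itself. The strategy is to rewrite $\gamma_{t}(x)$ modulo dyadic averages of $\nabla f_{Q}^{\xi}$ and then control the residual quadratic term by pairing with the sesquilinear form of ${\mathcal L}_{w}$, exploiting (\ref{eqn:degen}) to absorb the matrix ${\mathbf A}$. Making this rigorous in the degenerate setting requires a package of weighted analytic ingredients: Poincar\'e--Sobolev inequalities on $A_{2}$-balls and Caccioppoli estimates for weak solutions of ${\mathcal L}_{w}u=0$, both drawn from Fabes--Kenig--Serapioni, together with $L^{2}(w)$ off-diagonal estimates for $e^{-t^{2}{\mathcal L}_{w}}$ and its gradient, which we extract from Condition (G) using the doubling of $w$ and the $L^{2}(w)$-boundedness of the Hardy--Littlewood maximal operator. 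The hardest technical step is the simultaneous control of gradient off-diagonal bounds and the stopping-time accretivity on $dw$-cubes; it is precisely here that the Gaussian hypothesis is essential, as it shortcuts the much more delicate $Tb$ machinery required in the absence of pointwise kernel bounds.
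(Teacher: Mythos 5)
Your outline matches the paper's five-stage architecture: reduce to the upper bound by duality and the symmetry $\mathbf{A}\leftrightarrow\mathbf{A}^*$; reduce that to a square-function estimate for $t\mathcal L_w e^{-t^2\mathcal L_w}$; reduce further to a Carleson-measure bound for $\gamma_t=V_t\varphi$; prove a weighted $Tb$ theorem for square roots; build test functions and run a stopping-time argument. At that level of description you have the right proof, but two of your steps gloss over obstacles that the paper has to work to overcome.

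The serious one is the accretivity claim $\func{Re}\langle\xi,\avgint_Q\nabla f_Q^\xi\,dw\rangle\geq c>0$, which you assert as a $w$-average. The available ingredients — the conservation law $e^{-t\mathcal L_w}1=1$ and the Gaussian kernel bounds — interact with Lebesgue measure and give $\left|\avgint_Q(\nabla F_Q-\mathbf{1})\,dx\right|\leq C\varepsilon$ (Lemma~\ref{lemma-2.1}), and an $A_2$ weight can oscillate badly relative to the kernel, so passing to a $dw$-average is not free and may fail. The paper accordingly runs the stopping-time selection \eqref{eqn:2.7}, \eqref{eqn:2.8} entirely in Lebesgue averages, bounds the bad set in terms of $w^{-1}$-measure, and only then transfers to $w(E_{Q,\nu})\geq\eta\,w(Q)$ by two applications of the $A_\infty$ comparison of Lemma~\ref{lemma:Ainfty}, using $w^{-1}\in A_2$. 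This two-measure bookkeeping is precisely where the degenerate weight bites, and your sketch elides it. A lesser caveat: invoking McIntosh's equivalence presupposes the dual quadratic estimate, which itself must be proven; the paper instead argues directly from the identity $\mathcal L_w^{1/2}f=\int_0^\infty t^3 e^{-2t^2\mathcal L_w}\mathcal L_w^2 f\,\frac{dt}{t}$ and proves the dual estimate by the Schur test of Proposition~\ref{prop:schur}, with the Gaussian bounds as sole input. The remaining differences are matters of taste: you use resolvent test functions with a built-in cutoff, the paper uses heat-semigroup test functions $e^{-\varepsilon^2\ell(Q)^2\mathcal L_w}\varphi\cdot\bar\nu$ and introduces the cutoff $\tilde\chi$ only inside the $Tb$ proof; you mention Gaffney--Davies off-diagonal bounds, but Condition~(G) already supplies the stronger pointwise Gaussian kernel estimates, which the paper uses directly. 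What your sketch omits entirely is Lemma~\ref{lemma-sq-Taylor}, the square-function inequality controlling the Taylor-remainder term $\int V_t(x,y)G_t(x,y)\,dy$; this replaces the Plancherel/Fourier arguments of the unweighted case with weighted Riesz-transform and Riesz-potential estimates funneled through Proposition~\ref{prop:schur}, and accounts for most of the new weighted analysis in the paper.
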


To prove Theorem~\ref{theorem:WKato} it actually suffices to prove the
second inequality, 
\begin{equation}  \label{eqn:kato-half}
\|{\mathcal{L}}_w^{1/2}f\| _{L^2(w)}\leq C\| {\nabla} f\| _{L^2(w)}.
\end{equation}
For suppose this inequality holds. Since $\mathbf{A}\in \mathcal{E}\left(
n,\lambda ,\Lambda ,w\right) $ implies $\mathbf{A}^*\in \mathcal{E}\left(
n,\lambda ,\Lambda ,w\right) $, \eqref{eqn:kato-half} holds for $({\mathcal{L%
}}_w^{1/2})^* = ({\mathcal{L}}_w^*)^{1/2}$. (This operator identity follows
from the functional calculus, for instance, from \eqref{eqn:sqr-func-calc}.)
Therefore, by the ellipticity conditions 
\begin{align*}
\| {\nabla} f\| _{L^2(w)}^{2} = & \int_{\mathbb{R}^n} | {\nabla}
f(x)|^{2}w(x)\, dx \\
& \leq \lambda ^{-1} {\mathrm{Re}}\int_{\mathbb{R}^n} \mathbf{A}{\nabla}
f(x)\cdot \overline{{\nabla} f(x)}\,dx \\
= & \lambda ^{-1}{\mathrm{Re}}\int_{\mathbb{R}^n} {\mathcal{L}}_wf(x)%
\overline{f(x)}w(x)\,dx \\
= & \lambda ^{-1}{\mathrm{Re}}\int_{\mathbb{R}^n} {\mathcal{L}}_w^{1/2}f(x)%
\overline{ ({\mathcal{L}}_w^{1/2})^{\ast }f(x)} w(x)\, dx \\
\leq & \lambda ^{-1}\|{\mathcal{L}}_w^{1/2}f\|_{L^{2}\left( w\right) } \|({%
\mathcal{L}}_w^{*})^{1/2} f\| _{L^2(w)} \\
\leq & C \lambda ^{-1}\|{\mathcal{L}}_w^{1/2}f\| _{L^2(w)}\| {\nabla} f\|
_{L^{2}(w)}.
\end{align*}

\medskip

Our proof of inequality \eqref{eqn:kato-half} follows the outline of the
proof of the classical Kato problem with Gaussian bounds in \cite%
{hofmann-lacey-mcintosh02}. (See also the expository treatment in \cite%
{hofmann01}.) There are four main steps: in~Section~\ref%
{section:square-functions} we reduce \eqref{eqn:kato-half} to a square
function inequality; in Section~\ref{section:Carleson} we show that this
inequality is a consequence of a Carleson measure estimate; in Section~\ref%
{section:Tb} we prove a weighted $Tb$-theorem for square roots; finally, in
Section~\ref{section:test-func} we construct the family of test functions
need to use the $Tb$ theorem to prove the Carleson measure estimate. Prior
to the proof itself, in Sections~\ref{section:preliminaries} and~\ref%
{section:weights} we give some preliminary results about degenerate elliptic
operators, Gaussian bounds and weighted norm inequalities. And in Section~%
\ref{section:two-ineq} we prove two weighted square function inequalities
needed in our proof. The first, in particular, is central, since it is the
replacement for the (much simpler) Fourier transform estimates used in the
unweighted case.

Throughout, all notation is standard or will be defined as needed. The
letters $C$, $c$ will denote constants whose value may change at each
appearance. Given a function $f$ and $t>0$, define $f_{t}(x)=t^{-n}f(x/t)$.
Given an operator $T$ on a Banach space $X$, let $\Vert T\Vert _{{\mathcal{B}%
}(X)}$ denote the operator norm of $T$.

\section{Degenerate Elliptic Operators}

\label{section:preliminaries}

The properties of the degenerate elliptic operator ${\mathcal{L}}_w$ and the
associated semigroup $e^{-t{\mathcal{L}}_w}$ were developed in detail in~%
\cite{cruz-uribe-riosP} and we refer the reader there for complete details.
Here we state the key ideas.

Given a weight $w\in A_2$, the space $H_0^1(w)$ is the weighted Sobolev
space that is the completion of $C_c^\infty$ with respect to the norm 
\begin{equation*}
\|f\|_{H_0^1(w)} = \left(\int_{\mathbb{R}^n} \left(|f(x)|^2+|\nabla
f(x)|^2\right)w(x)\,dx\right)^{1/2}.
\end{equation*}
Given a matrix $\mathbf{A}\in {\mathcal{E}}_n(w,\lambda,\Lambda)$, define ${%
\mathfrak{a}}(f,g)$ to be the sesquilinear form 
\begin{equation}  \label{eqn-form}
{\mathfrak{a}}(f,g) = \int_{\mathbb{R}^n} \mathbf{A}(x)\nabla f(x) \cdot 
\overline{\nabla g(x)} \,dx.
\end{equation}
Since $w\in A_2$ and $\mathbf{A}$ satisfies \eqref{eqn:degen}, $\mathfrak{a} 
$ is a closed, maximally accretive, continuous sesquilinear form. Therefore,
there exists a densely defined operator ${\mathcal{L}}_w$ on $L^2(w)$ such
that for every $f$ in the domain of ${\mathcal{L}}_w$ and every $g\in L^2(w)$%
, 
\begin{equation}  \label{eqn-a2}
{\mathfrak{a}}(f,g) = \langle {\mathcal{L}}_wf, g \rangle_w= \int_{\mathbb{R}%
^n} {\mathcal{L}}_wf(x)\overline{g(x)}w(x)\,dx.
\end{equation}
If $f,\,g$ are in ${\mathcal{C}}_c^\infty$ and in the domain of ${\mathcal{L}%
}_w$, then integration by parts yields 
\begin{equation*}
\langle {\mathcal{L}}_wf, g \rangle_w = {\mathfrak{a}}(f,g) = \langle {%
\mathcal{L}} f,g\rangle,
\end{equation*}
where $\langle\,,\,\rangle$ is the standard complex inner-product on $L^2$.
Thus, at least formally, ${\mathcal{L}}_w = w^{-1}{\mathcal{L}}= -w^{-1}{%
\mathrm{div}}\mathbf{A}{\nabla}$.

Further, the properties of the sesquilinear form $\mathfrak{a}$ guarantee
that the semigroup $e^{-t{\mathcal{L}}_w}$ exists. In the special case when $%
\mathbf{A}$ is real and symmetric, then the heat kernel of $e^{-t{\mathcal{L}%
}_w}$ satisfies Condition (G).

Finally, in \cite{cruz-uribe-riosP} we proved the following results which
will be needed in our proof.

\begin{lemma}
\label{lemma:conservation} Given a matrix $\mathbf{A}\in {\mathcal{E}}%
_{n}(w,\lambda ,\Lambda )$, suppose that the heat kernel of the associated
semigroup $e^{-t{\mathcal{L}}_{w}}$ satisfies Condition (G). Then for all $%
t>0$, $e^{-t{\mathcal{L}}_{w}}1=1$: that is, for all $x\in {\mathbb{R}^n}$, 
\begin{equation*}
\int_{\mathbb{R}^n} W_t(x,y)\,dy = 1.
\end{equation*}
\end{lemma}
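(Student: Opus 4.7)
The plan is to approximate the constant function $\mathbf{1}$ by $C_c^\infty$ cutoffs, pass them through the semigroup, and show that the error vanishes at infinity. Let $\phi \in C_c^\infty({\mathbb R}^n)$ satisfy $\phi \equiv 1$ on $B(0,1)$, $\mathrm{supp}\,\phi \subset B(0,2)$, $|\nabla\phi|\le C$, and set $\phi_R(x) = \phi(x/R)$; then $|\nabla\phi_R| \le C/R$ and $\mathrm{supp}(\nabla\phi_R) \subset B(0,2R)\setminus B(0,R)$. The Gaussian bound \eqref{ineq:gauss2} and dominated convergence immediately give
\begin{equation*}
e^{-t{\mathcal L}_w}\phi_R(x) = \int_{{\mathbb R}^n} W_t(x,y)\phi_R(y)\,dy \;\longrightarrow\; \int_{{\mathbb R}^n} W_t(x,y)\,dy
\end{equation*}
as $R\to\infty$, so it suffices to show that the left side also tends to $1$.

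Since $\phi_R \in C_c^\infty \subset \mathrm{Dom}({\mathcal L}_w)$, Duhamel's formula yields
\begin{equation*}
e^{-t{\mathcal L}_w}\phi_R(x) - \phi_R(x) = -\int_0^t e^{-s{\mathcal L}_w}{\mathcal L}_w\phi_R(x)\,ds.
\end{equation*}
Once $R$ is large enough that $|x|<R$, we have $\phi_R(x)=1$, and the task reduces to showing that the right-hand side tends to $0$ as $R\to\infty$. To bound the integrand, I would rewrite the pairing $\int W_s(x,y)\,{\mathcal L}_w\phi_R(y)\,dy$ through the sesquilinear form $\mathfrak{a}$ and integrate by parts to transfer the divergence onto $W_s(x,\cdot)$. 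Because $\mathrm{supp}(\nabla\phi_R)$ lies in the annulus $R\le|y|\le 2R$, which sits at distance at least $R/2$ from $x$, the combination of $|\mathbf{A}|\le\Lambda w$, $|\nabla\phi_R|\le C/R$, and Cauchy--Schwarz reduces matters to an off-diagonal $L^2(w)$-bound for $\nabla_y W_s(x,y)$ on this annulus.

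Such a bound I would obtain by applying the weighted Caccioppoli inequality (available in the $A_2$ setting) to $y\mapsto W_s(x,y)$, which weakly solves ${\mathcal L}_w W_s(x,\cdot) = -\partial_s W_s(x,\cdot)$, using \eqref{ineq:gauss2} as the $L^\infty$ input on a slightly larger annulus. The outcome should be an estimate roughly of the form
\begin{equation*}
\int_{R\le|y|\le 2R} |\nabla_y W_s(x,y)|^2 \, w(y)\,dy \le \frac{C}{s^{n+1}}\, e^{-cR^2/s}\, w(B(0,4R)),
\end{equation*}
which, together with the doubling of $w$, yields $|e^{-s{\mathcal L}_w}{\mathcal L}_w\phi_R(x)| \le C\, s^{-\alpha} R^{-1} e^{-cR^2/(2s)}$ for some $\alpha, c > 0$. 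Integrating in $s\in(0,t)$ then forces the error to vanish as $R\to\infty$ for each fixed $t>0$, and the two convergences combine to give $\int W_t(x,y)\,dy = 1$.

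The main obstacle is the off-diagonal Caccioppoli bound for $\nabla_y W_s(x,y)$ in the degenerate $A_2$ setting; once it is in hand, the rest is routine decay bookkeeping. In turn, this estimate should follow from the standard weighted Caccioppoli inequality applied on the annulus, together with the $L^\infty$ Gaussian bound \eqref{ineq:gauss2}, and possibly a self-improvement step to bring the constant $c$ in the exponential through the Cauchy--Schwarz step without degradation.
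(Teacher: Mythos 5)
The paper does not actually prove Lemma~\ref{lemma:conservation} here: it is imported from the earlier work \cite{cruz-uribe-riosP}, so there is no internal proof to compare against. Judging your proposal on its own merits: the overall strategy (approximate $\mathbf{1}$ by cutoffs $\phi_R$, reduce to Duhamel, and show the error term supported on the annulus $R\le |y|\le 2R$ decays via an off-diagonal estimate for the gradient of the heat kernel obtained from a weighted Caccioppoli inequality) is a well-established route to conservation for divergence-form operators, and is very likely the one used in \cite{cruz-uribe-riosP} or its unweighted model (Auscher--Tchamitchian, Ouhabaz). So the plan is sound in outline.

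There are two genuine gaps as written. First, the claim $\phi_R\in C_c^\infty\subset\mathrm{Dom}(\mathcal{L}_w)$ is not justified and is in general false: membership requires $w^{-1}\mathrm{div}(\mathbf{A}\nabla\phi_R)\in L^2(w)$, but $\mathbf{A}$ is merely a measurable matrix bounded by $\Lambda w$, so $\mathrm{div}(\mathbf{A}\nabla\phi_R)$ need not even be a function. Since the Duhamel identity $e^{-t\mathcal{L}_w}f - f = -\int_0^t\mathcal{L}_w e^{-s\mathcal{L}_w}f\,ds$ as a Bochner integral requires $f\in\mathrm{Dom}(\mathcal{L}_w)$ (the operator norm $\|\mathcal{L}_we^{-s\mathcal{L}_w}\|\sim s^{-1}$ is not integrable near $0$), you must either work in the form domain $H_0^1(w)$ (where $\phi_R$ does live) using $\|\mathcal{L}_w^{1/2}e^{-s\mathcal{L}_w}g\|\lesssim s^{-1/2}\|g\|$, or formulate Duhamel weakly against a test function and then shift $\mathcal{L}_w$ onto the heat kernel via the sesquilinear form. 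Second, the function $y\mapsto W_s(x,y)$ does \emph{not} weakly solve $\partial_sW_s(x,\cdot)=-\mathcal{L}_wW_s(x,\cdot)$; the equation in $x$ is $\partial_sW_s = -\mathcal{L}_{w,x}W_s$, while in $y$ the relevant object is $\overline{W_s^*(y,x)}=W_s(x,y)\,w(x)/w(y)$, which solves the adjoint equation for $\mathcal{L}_w^*$. So the Caccioppoli step must be applied (in parabolic form, since $\partial_s\ne 0$) to $W_s(x,\cdot)/w(\cdot)$ with respect to the transposed matrix $\mathbf{A}^T$, not to $W_s(x,\cdot)$ with $\mathbf{A}$. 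These are fixable but not cosmetic, and they are exactly the kind of detail that the paper outsources to \cite{cruz-uribe-riosP}.
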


If $e^{-t{\mathcal{L}}_w}$ satisfies Gaussian bounds, then its derivative
satisfies similar bounds. More precisely, let $V_{t}=-2t{\mathcal{L}}%
_{w}e^{-t^{2}{\mathcal{L}}_{w}}=\frac{d}{dt}e^{-t^{2}{\mathcal{L}}_{w}}$.
Then the following result holds.

\begin{lemma}
\label{lemma:gaussian-prime} The operator $V_{t}$ has a kernel $V_{t}(x,y)$
with the following properties:

For all $x,\,y\in {\mathbb{R}^{n}}$ and $t>0$, 
\begin{equation}
|V_{t}(x,y)|\leq \frac{C_{1}}{t^{n+1}}\exp \left( -C_{2}\frac{|x-y|^{2}}{%
t^{2}}\right).  \label{ineq:gauss-p}
\end{equation}

For almost every $x\in {\mathbb{R}^{n}}$, $V_t1=0$, that is, for all $x\in {%
\mathbb{R}^n}$, 
\begin{equation*}
\int_{\mathbb{R}^{n}}V_{t}(x,y)\,dy=0.
\end{equation*}

There exists $\alpha =\alpha \left( n,\lambda ,\Lambda ,w\right) >0$ such
that for almost every $x,\,y,\,\in {\mathbb{R}^{n}}$, $2\left\vert
h\right\vert <t+\left\vert x\right\vert ,$ 
\begin{equation*}
|V_{t}(x,y)-V_{t}(x,y+h)|+|V_{t}(x+h,y)-V_{t}(x,y)|\leq \frac{C_{1}}{t^{n+1}}%
\left( \dfrac{\left\vert h\right\vert }{t+\left\vert x\right\vert }\right)
^{\alpha }\exp \left( -C_{2}\frac{|x-y|^{2}}{t^{2}}\right) .
\end{equation*}
\end{lemma}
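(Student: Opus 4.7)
The plan is to exploit the kernel identity $V_t(x,y) = 2t\,\partial_s W_s(x,y)\big|_{s=t^2}$, which reduces all three assertions to estimates on the $s$-derivative of the heat kernel. Since the form $\mathfrak{a}$ in \eqref{eqn-form} is sectorial on $L^2(w)$ (by the degenerate ellipticity \eqref{eqn:degen}), standard form theory guarantees that $e^{-z\mathcal{L}_w}$ extends analytically to a sector $\Sigma_\theta = \{z\neq 0 : |\arg z| < \theta\}$ for some $\theta > 0$ depending only on $\lambda,\Lambda$. The first step is therefore to extend the Gaussian bound \eqref{ineq:gauss2} from real $t>0$ to complex times $z$ in a slightly smaller sector $\Sigma_{\theta'}$, obtaining
\begin{equation*}
|W_z(x,y)| \leq \frac{C}{|z|^{n/2}}\exp\!\left(-c\,\frac{|x-y|^2}{|z|}\right),\qquad z\in\Sigma_{\theta'}.
\end{equation*}
I would do this either by a Davies-type perturbation, bounding $e^{\rho\phi}e^{-z\mathcal{L}_w}e^{-\rho\phi}$ on $L^2(w)$ for a bounded Lipschitz phase $\phi$ with $|\nabla\phi|\leq 1$ and converting to pointwise bounds using $L^2(w)\to L^\infty$ estimates (which follow from Condition (G)), or by a Phragm\'en--Lindel\"of argument applied to $z\mapsto W_z(x,y)\exp(c|x-y|^2/z)$.

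Once the complex Gaussian bound is in hand, Cauchy's integral formula on a circle $|\zeta-s|=rs$ contained in $\Sigma_{\theta'}$ gives
\begin{equation*}
|\partial_s W_s(x,y)| = \left|\frac{1}{2\pi i}\oint\frac{W_\zeta(x,y)}{(\zeta-s)^2}\,d\zeta\right| \leq \frac{C}{s^{n/2+1}}\exp\!\left(-c\,\frac{|x-y|^2}{s}\right),
\end{equation*}
and substituting $s=t^2$ produces the pointwise bound \eqref{ineq:gauss-p}. The conservation $V_t1=0$ is then immediate: differentiating the identity $\int W_s(x,y)\,dy=1$ from Lemma~\ref{lemma:conservation} in $s$, with the interchange of differentiation and integration justified by the Gaussian bound on $\partial_s W_s$ and dominated convergence, gives $\int\partial_s W_s(x,y)\,dy=0$ at every real $s>0$, and evaluating at $s=t^2$ completes the claim.

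For the H\"older continuity I would apply the same Cauchy integral formula to the analytic functions $\zeta\mapsto W_\zeta(x+h,y)-W_\zeta(x,y)$ and $\zeta\mapsto W_\zeta(x,y+h)-W_\zeta(x,y)$, after first upgrading the H\"older estimate \eqref{ineq:gauss3} to complex $\zeta\in\Sigma_{\theta'}$ by the same Phragm\'en--Lindel\"of reasoning. The contour integration then contributes the extra factor $(|h|/(s^{1/2}+|x-y|))^\mu$; substituting $s=t^2$ and possibly reducing the H\"older exponent to some $\alpha\in(0,\mu]$ in order to absorb the comparison between $t^{1/2}+|x-y|$ and $t+|x-y|$ on the diagonal gives the announced form.

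The main obstacle is the extension of the real-time Gaussian and H\"older bounds to the complex sector in the weighted setting; once that is done, Cauchy's formula produces the derivative bounds mechanically. The key technical points to verify are that the form-theoretic machinery---analyticity of the semigroup, $L^2(w)\to L^\infty$ bounds, and the Davies perturbation estimates---transfers to $L^2(w)$ with constants depending only on $n,\lambda,\Lambda$ and $w$. This is ensured by the $A_2$ hypothesis on $w$ together with the degenerate ellipticity \eqref{eqn:degen}, as established in the preliminary results of \cite{cruz-uribe-riosP}.
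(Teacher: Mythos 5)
The paper does not prove Lemma~\ref{lemma:gaussian-prime} within its own text: it is quoted verbatim from the companion paper \cite{cruz-uribe-riosP}, to which the reader is referred at the start of Section~\ref{section:preliminaries}. There is therefore no in-paper argument to compare against line by line. That said, your sketch is the standard route to derivative Gaussian bounds and is correct in its broad strokes: the identity $V_t(x,y)=2t\,\partial_s W_s(x,y)\big|_{s=t^2}$, the analytic extension of the semigroup to a sector (sectoriality of the accretive form $\mathfrak{a}$), the extension of Condition (G) to complex times via Phragm\'en--Lindel\"of or Davies' perturbation, and Cauchy's integral formula on a circle of radius comparable to $s$ to convert size bounds on $W_\zeta$ into size bounds on $\partial_s W_s$. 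The conservation $V_t1=0$ by differentiating $\int W_s(x,y)\,dy=1$ under the integral sign (justified by the just-obtained bound) is exactly right.

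Two points you should make explicit rather than leave implicit. First, the Cauchy-formula step produces the factor $s^{-1}\sim t^{-2}$, and together with the prefactor $2t$ this gives $t^{-n-1}$; this bookkeeping is what makes the exponent in \eqref{ineq:gauss-p} come out right, so it is worth displaying. Second, for the H\"older estimate, the real-time bound \eqref{ineq:gauss3} at time $s=t^2$ already reads $\bigl(|h|/(t+|x-y|)\bigr)^{\mu}$; there is no genuine exponent loss from the reparametrization $s=t^2$ itself, and the reduction from $\mu$ to some $\alpha\in(0,\mu]$ actually arises from the Phragm\'en--Lindel\"of interpolation (which trades a fraction of the H\"older exponent for the complex extension), not from the substitution on the diagonal. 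As you note, the heavy lifting is the complex-time extension of the Gaussian and H\"older bounds in the weighted setting; since Condition (G) is hypothesized as a black box here (rather than derived), the perturbation argument goes through formally on $L^2(w)$ once one has the $L^2(w)\to L^\infty$ bounds that (G) itself provides, so the program is sound, modulo the technical detail you have flagged.
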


\section{Weighted Norm Inequalities}

\label{section:weights}

Central to our proof is the theory of weighted norm inequalities for
classical operators, particularly singular integrals and square functions.
In this section we state the results we need; the standard ones are given
without proof and we refer the reader to Duoandikoetxea~\cite%
{duoandikoetxea01}, Garc\'{\i}a-Cuerva and Rubio de Francia~\cite%
{garciacuerva-rubiodefrancia85}, and Grafakos~\cite{grafakos04} for complete
information.

We begin with the weighted norm inequalities for the Hardy-Littlewood
maximal operator, for convolution operators, and for singular integrals.

\begin{lemma}
\label{prop:conv} Let $w\in A_2$. Then $M$ is bounded on $L^2(w)$ and $%
\|M\|_{\mathbf{B}(L^2(w))} \leq C(n,[w]_{A_2})$. Furthermore, suppose $\phi$
and $\Phi$ are such that for all $x$, $|\phi(x)|\leq \Phi(x)$, and $\Phi$ is
radial, decreasing and integrable. Then the operators $\phi_t*f$ are
uniformly bounded on $L^2(w)$; in fact, 
\begin{equation*}
\sup_{t>0} |\phi_t*f(x)|\leq C(n) \|\Phi\|_1 Mf(x).
\end{equation*}
\end{lemma}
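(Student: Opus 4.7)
The plan is to prove the two parts of the lemma separately; both are classical and the argument should take only a few lines once the right results are cited.

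For the first assertion, I would invoke Muckenhoupt's theorem: if $w \in A_p$ with $1<p<\infty$, then the Hardy-Littlewood maximal operator $M$ is bounded on $L^p(w)$, with operator norm depending only on $n$, $p$ and $[w]_{A_p}$. Applied with $p=2$, this is exactly the first claim. A self-contained proof of this would proceed via the Calderón--Zygmund decomposition adapted to the measure $w(x)\,dx$ together with the reverse Hölder inequality for $A_2$ weights, but since this is textbook material (e.g.\ in the references cited at the start of the section) I would simply cite it.

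For the second assertion the key point is the pointwise estimate $\sup_{t>0}|\phi_t*f(x)|\leq C(n)\|\Phi\|_{1}Mf(x)$, which is an unweighted inequality. Since $|\phi(x)|\leq \Phi(x)$ we have $|\phi_t*f(x)|\leq \Phi_t*|f|(x)$. Because $\Phi$ is nonnegative, radial, and decreasing, one approximates $\Phi$ from below by finite linear combinations of characteristic functions of centered balls, $\Phi(x)=\lim_N \sum_{k} c_k^{(N)}\chi_{B(0,r_k^{(N)})}(x)$ with $c_k^{(N)}\geq 0$. Then
\begin{equation*}
\Phi_t*|f|(x) \leq \sum_k c_k^{(N)} t^{-n}\chi_{B(0,tr_k^{(N)})}*|f|(x)
\leq C(n)\Big(\sum_k c_k^{(N)}|B(0,r_k^{(N)})|\Big) Mf(x),
\end{equation*}
and the sum converges to $\|\Phi\|_{1}$ by monotone convergence. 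Passing to the supremum in $t$ gives the pointwise bound.

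Combining the pointwise bound with the first assertion yields
\begin{equation*}
\|\phi_t*f\|_{L^2(w)} \leq C(n)\|\Phi\|_{1}\|Mf\|_{L^2(w)} \leq C(n,[w]_{A_2})\|\Phi\|_{1}\|f\|_{L^2(w)},
\end{equation*}
uniformly in $t>0$. There is no real obstacle here; the only thing one needs to be careful about is verifying that the approximation of a radial decreasing function by simple functions over centered balls converges in $L^1$, which follows from the layer-cake representation $\Phi(x)=\int_0^{\Phi(x)}d\lambda$ together with Fubini.
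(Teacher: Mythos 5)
Your proof is correct and is exactly the classical argument found in the references the paper cites for this lemma (Duoandikoetxea, Garc\'ia-Cuerva--Rubio de Francia, Grafakos); the paper itself states the lemma without proof and simply refers to those texts. Citing Muckenhoupt's theorem for the first part and using the layer-cake approximation of the radial decreasing majorant $\Phi$ by centered balls for the pointwise bound is precisely the standard route, and your details (monotone convergence to recover $\|\Phi\|_1$, then composing with the $L^2(w)$ bound for $M$) are all in order.
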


\begin{remark}
\label{rem:conv} If $\phi$ is a Schwartz function then such a function $\Phi$
always exists.
\end{remark}

\medskip

\begin{lemma}
\label{prop:sio} Let $K : {\mathbb{R}^n}\setminus\{0\} \rightarrow {\mathbb{R%
}}$ be a locally integrable function such that $\widehat{K} \in L^\infty$
and 
\begin{equation*}
|{\nabla} K(x) | \leq \frac{C}{|x|^{n+1}}, \quad x\neq 0.
\end{equation*}
Then the singular integral 
\begin{equation*}
Tf(x) = \int_{\mathbb{R}^n} K(x-y)f(y)\,dy
\end{equation*}
and the maximal singular integral 
\begin{equation*}
T^*f(x) = \sup_{\epsilon>0}\left|\int_{|x-y|>\epsilon} K(x-y)f(y)\,dy\right|
\end{equation*}
are bounded on $L^2(w)$, and $\|T\|_{\mathbf{B}(L^2(w))},\,\|T^*\|_{\mathbf{B%
}(L^2(w))}\leq C(n,[w]_{A_2},K)$.
\end{lemma}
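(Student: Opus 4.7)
The plan is to recognize $T$ as a standard Calderón--Zygmund operator and then invoke the classical weighted theory of singular integrals. I would split the argument into unweighted $L^2$ boundedness, verification of the Hörmander regularity condition, the passage to $L^2(w)$, and finally the treatment of the maximal truncation.

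\textbf{Step 1 (unweighted $L^2$ bound).} Since $\widehat{K}\in L^{\infty}$, Plancherel's theorem gives, for $f\in L^2\cap L^1$,
\begin{equation*}
\|Tf\|_{2} = \|\widehat{K}\,\widehat{f}\,\|_{2} \leq \|\widehat{K}\|_{\infty}\|f\|_{2},
\end{equation*}
so $T$ extends to a bounded operator on the unweighted $L^2$.

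\textbf{Step 2 (Hörmander's condition).} From $|\nabla K(x)| \leq C|x|^{-n-1}$ and the fundamental theorem of calculus,
\begin{equation*}
K(x-y)-K(x) = -\int_{0}^{1} y\cdot\nabla K(x-sy)\,ds,
\end{equation*}
and whenever $|x|>2|y|$ one has $|x-sy|\geq |x|/2$ for every $s\in[0,1]$. This yields the pointwise regularity estimate
\begin{equation*}
|K(x-y)-K(x)| \leq C|y|\,|x|^{-n-1}, \qquad |x|>2|y|,
\end{equation*}
which integrates to Hörmander's condition $\int_{|x|>2|y|}|K(x-y)-K(x)|\,dx\leq C$, uniformly in $y\neq 0$.

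\textbf{Step 3 ($L^2(w)$ bound for $T$).} Steps 1 and 2 are exactly the hypotheses of the Calderón--Zygmund theorem. The standard CZ decomposition then produces a weak $(1,1)$ estimate, Marcinkiewicz interpolation and duality give unweighted $L^p$ boundedness for all $1<p<\infty$, and the $A_p$ theory of singular integrals (Coifman--Fefferman, see \cite{duoandikoetxea01, garciacuerva-rubiodefrancia85, grafakos04}) upgrades this to $\|Tf\|_{L^2(w)} \leq C\|f\|_{L^2(w)}$ for every $w\in A_2$, with constant depending only on $n$, $[w]_{A_2}$, $\|\widehat{K}\|_{\infty}$ and the constant in the gradient bound.

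\textbf{Step 4 (maximal truncation).} Under the size and smoothness conditions on $K$, the standard Cotlar-type inequality
\begin{equation*}
T^{*}f(x) \leq C\bigl(M(Tf)(x) + Mf(x)\bigr)
\end{equation*}
holds. Combining this with the $L^2(w)$ bound on $T$ from Step 3 and the $L^2(w)$ bound on $M$ from Lemma~\ref{prop:conv} yields $\|T^{*}f\|_{L^2(w)}\leq C\|f\|_{L^2(w)}$, completing the proof. There is no genuine obstacle here; the only point requiring a small amount of care is bookkeeping the dependence of the constants on $n$, $[w]_{A_2}$ and the kernel data, which is why the lemma is stated with that particular dependence.
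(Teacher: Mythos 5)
Your proof is correct and is precisely the standard Calder\'on--Zygmund argument that the paper itself defers to: the paper states this lemma without proof, citing Duoandikoetxea, Garc\'{\i}a-Cuerva--Rubio de Francia, and Grafakos for the classical weighted theory, which is exactly the chain (Plancherel, H\"ormander condition from the gradient bound, Coifman--Fefferman $A_p$ theorem, Cotlar inequality for the maximal truncation) you have laid out.
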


\begin{remark}
\label{rem:riesz} An important example of singular integrals are the Riesz
transforms: 
\begin{equation*}
R_jf(x) = c_n\int_{\mathbb{R}^n} \frac{x_j-y_j}{|x-y|^{n+1}}f(y)\,dy, \quad
1\leq j \leq n,
\end{equation*}
where the constant $c_n$ is chosen so that 
\begin{equation*}
\widehat{R_j f}(\xi) = -i \frac{\xi_j}{|\xi|}\widehat{f}(\xi).
\end{equation*}
\end{remark}

\medskip

Our next two results are square function inequalities. The first is a
weighted version of Carleson's theorem due to Journ\'e~\cite{journe83}.
Define the weighted Carleson measure norm of a function $\gamma _{t}$ by 
\begin{equation}
\Vert \gamma _{t}\Vert _{C,w}=\sup_{Q}\frac{1}{w(Q)}\int_{Q}\int_{0}^{\ell
(Q)}|\gamma _{t}(x)|^{2}\,\frac{dt}{t}\,w(x)\,dx.  \label{eqn:wtd-carleson}
\end{equation}

\begin{lemma}
\label{lemma:wtd-carleson} Let $w\in A_{2}$ and suppose $\gamma _{t}$ is
such that $\Vert \gamma _{t}\Vert _{C,w}<\infty $. Let $p\in C_{c}^{\infty }(%
{\mathbb{R}^{n}})$ be such that $p$ is a non-negative, radial, decreasing
function, ${\mathrm{supp}}(p)\subset B_{1}(0)$, and $\Vert p\Vert _{1}=1$.
Then for all $f\in L^{2}(w)$, 
\begin{equation*}
\int_{0}^{\infty }\int_{\mathbb{R}^{n}}|(p_{t}\ast f)(x)|^{2}|\gamma
_{t}(x)|^{2}\,w(x)\,dx\,\frac{dt}{t}\leq C\Vert \gamma _{t}\Vert _{C,w}\Vert
f\Vert _{L^{2}(w)}^{2}.
\end{equation*}
\end{lemma}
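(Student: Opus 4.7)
The plan is to recast this as a weighted Carleson embedding theorem on the upper half-space. Define the positive measure $d\mu(x,t) = |\gamma_t(x)|^2 w(x)\,dx\,\frac{dt}{t}$ on $\rp$. The hypothesis $\|\gamma_t\|_{C,w} < \infty$ says precisely that $\mu$ is a \emph{$w$-Carleson measure}: for every cube $Q \subset \rn$,
\begin{equation*}
\mu\bigl(Q \times (0, \ell(Q))\bigr) \leq \|\gamma_t\|_{C,w}\, w(Q).
\end{equation*}
Setting $F(x,t) = (p_t \ast f)(x)$, the lemma becomes
\begin{equation*}
\int\!\!\int_\rp |F|^2 \, d\mu \leq C\,\|\gamma_t\|_{C,w}\, \|f\|_{L^2(w)}^2.
\end{equation*}

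First I would reduce the $t$-dependent quantity $F(x,t)$ to a nontangential maximal function in $x$ alone. Because $p$ is nonnegative, radial, decreasing, and compactly supported, Lemma~\ref{prop:conv} (applied with $\Phi = p$) gives the pointwise bound $|(p_t\ast f)(y)| \leq C\, Mf(y)$ uniformly in $t>0$. Consequently the nontangential maximal function
\begin{equation*}
N^*F(x) \;=\; \sup_{|y-x|<t,\, t>0} |F(y,t)|
\end{equation*}
satisfies $N^*F(x) \leq C\, Mf(x)$, and a second application of Lemma~\ref{prop:conv} yields $\|N^*F\|_{L^2(w)} \leq C\|f\|_{L^2(w)}$. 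It therefore suffices to prove the weighted Carleson embedding
\begin{equation*}
\int\!\!\int_\rp |F(x,t)|^2\, d\mu(x,t) \;\leq\; C\,\|\gamma_t\|_{C,w}\,\|N^*F\|_{L^2(w)}^2
\end{equation*}
for arbitrary measurable $F$ on $\rp$.

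To prove this embedding I would follow Journé's standard good-$\lambda$ / layer-cake scheme. For each $\lambda>0$, let $E_\lambda = \{x\in\rn : N^*F(x) > \lambda\}$, which is open, and perform a Whitney decomposition $E_\lambda = \bigcup_j Q_j$ into dyadic cubes with $\ell(Q_j) \sim \mathrm{dist}(Q_j, \rn\setminus E_\lambda)$. The key geometric observation is that if $|F(x,t)|>\lambda$ then the entire cone tip $B(x,t) \subset E_\lambda$, so the half-space superlevel set satisfies
\begin{equation*}
\{(x,t)\in\rp:\ |F(x,t)|>\lambda\} \;\subset\; \bigcup_j Q_j^\ast \times \bigl(0,c\ell(Q_j)\bigr)
\end{equation*}
for a fixed dilate $Q_j^\ast$. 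Applying the Carleson condition on each $Q_j^\ast$, summing over $j$, and using both the disjointness of the $Q_j$ and the doubling property of $w\,dx$ (which holds since $w\in A_2$) yields
\begin{equation*}
\mu\bigl(\{|F|>\lambda\}\bigr) \;\leq\; C\,\|\gamma_t\|_{C,w}\, w(E_\lambda).
\end{equation*}
Integrating $2\lambda$ times this inequality via the layer-cake formula $\int|F|^2 d\mu = \int_0^\infty 2\lambda\,\mu(\{|F|>\lambda\})\,d\lambda$ gives the embedding, and combining it with the maximal bound from the previous paragraph completes the proof.

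The main obstacle is the Whitney-cover step and the passage from the cube-by-cube Carleson hypothesis to the distribution inequality $\mu(\{|F|>\lambda\}) \leq C\|\gamma_t\|_{C,w}\, w(E_\lambda)$: this is where the $A_2$ hypothesis is essential, both for doubling of $w$ and for the $L^2(w)$-boundedness of $M$ that dominates $N^*F$. Everything else is routine from Lemma~\ref{prop:conv}.
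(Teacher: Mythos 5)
The paper does not actually prove this lemma: it is stated without proof and attributed to Journ\'e. Your good-$\lambda$/layer-cake argument is the standard proof of the (weighted) Carleson embedding, and the overall architecture --- domination of $p_t\ast f$ by a nontangential maximal quantity, Whitney decomposition of the superlevel set of that quantity, cube-by-cube application of the Carleson condition together with the doubling property of $w$, and layer-cake integration --- is correct and is exactly what the cited reference does in the unweighted setting. There is one genuine logical gap, though, in the step where you pass from $|p_t\ast f(y)|\le C\,Mf(y)$ to $N^*F(x)\le C\,Mf(x)$. The word ``consequently'' conceals the fact that $\sup_{|y-x|<t,\ t>0}Mf(y)$ is \emph{not} controlled by $Mf(x)$ --- indeed that supremum ranges over all $y\in\rn$, so Lemma~\ref{prop:conv} alone gives nothing. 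What actually makes $N^*F$ comparable to $Mf$ is the compact support of $p$: if $\supp(p)\subset B_1(0)$, $|y-x|<t$, then $B_t(y)\subset B_{2t}(x)$ and hence
\begin{equation*}
|p_t\ast f(y)|\;\le\;\|p\|_\infty\, t^{-n}\int_{|y-z|<t}|f(z)|\,dz\;\le\; C(n,\|p\|_\infty)\,\textaver{B_{2t}(x)}|f(z)|\,dz\;\le\; C\,Mf(x),
\end{equation*}
which gives $N^*F(x)\le C\,Mf(x)$ directly. (With this observation one can in fact dispense with $N^*F$ altogether and run the good-$\lambda$ argument with $E_\lambda=\{x:Mf(x)>\lambda/C\}$, since the same support estimate shows that $|F(x,t)|>\lambda$ forces $Mf(y)>\lambda/C$ for every $y\in B_t(x)$.) With that repair the proof is complete.
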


The second result is a Littlewood-Paley type inequality.

\begin{lemma}
\label{lemma:gfunction} Given $w\in A_2$, let $\psi$ be a Schwartz function
such that $\widehat\psi(0)=0$. Then for all $f\in L^2(w)$, 
\begin{equation}  \label{eqn:gfunction1}
\int_{\mathbb{R}^n} \int_0^\infty |\psi_t*f(x)|^2\,\frac{dt}{t}\,w(x)\,dx
\leq C(n,\psi,[w]_{A_2})\|f\|_{L^2(w)}^2.
\end{equation}
\end{lemma}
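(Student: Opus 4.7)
The plan is to interpret the $g$-function as a Hilbert-space valued Calder\'on--Zygmund operator and invoke the weighted theory for such operators. Let $\mathcal{H}=L^{2}((0,\infty),\,dt/t)$ and define $Tf(x)=(\psi_{t}*f(x))_{t>0}$, viewed as an $\mathcal{H}$-valued function on $\rn$. Inequality \eqref{eqn:gfunction1} then reads $\|Tf\|_{L^{2}(\rn,w;\mathcal{H})}\leq C\|f\|_{L^{2}(w)}$, so it suffices to establish the weighted $L^{2}$ boundedness of $T$ into this vector-valued target.

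First I would verify the unweighted $L^{2}$ bound. By Plancherel and Fubini,
\begin{equation*}
\|Tf\|_{L^{2}(\rn;\mathcal{H})}^{2}=\int_{\rn}|\widehat{f}(\xi)|^{2}\left(\int_{0}^{\infty}|\widehat{\psi}(t\xi)|^{2}\,\frac{dt}{t}\right)d\xi ,
\end{equation*}
and the inner integral is a finite constant independent of $\xi\neq 0$: the substitution $s=t|\xi|$ removes the $\xi$-dependence, the hypothesis $\widehat{\psi}(0)=0$ together with smoothness gives $|\widehat{\psi}(\eta)|\leq C|\eta|$ near the origin (ensuring integrability at $s=0$), and Schwartz decay handles $s\to\infty$.

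Next I would verify that the $\mathcal{H}$-valued convolution kernel $K(x)(t)=t^{-n}\psi(x/t)$ obeys the size bound $\|K(x)\|_{\mathcal{H}}\leq C|x|^{-n}$ and the H\"ormander smoothness condition
\begin{equation*}
\int_{|x|>2|y|}\|K(x-y)-K(x)\|_{\mathcal{H}}\,dx\leq C,\qquad y\neq 0 .
\end{equation*}
Both reduce to elementary pointwise estimates, obtained by splitting the $t$-integral at $t=|x|$ (respectively $t=|y|$) and using the Schwartz decay of $\psi$ and $\grad\psi$: for $t$ much smaller than the relevant length scale the rapid decay of $\psi(x/t)$ dominates, while for larger $t$ one applies the mean value theorem together with $|\grad\psi(z)|\leq C(1+|z|)^{-N}$.

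With both ingredients in place, the conclusion follows from the standard vector-valued $A_{2}$ theorem: any operator bounded from $L^{2}(\rn)$ to $L^{2}(\rn;\mathcal{H})$ whose Hilbert-space valued kernel satisfies the displayed size and H\"ormander conditions is also bounded from $L^{2}(\rn,w)$ to $L^{2}(\rn,w;\mathcal{H})$ for every $w\in A_{2}$. This is the $\mathcal{H}$-valued analogue of Lemma~\ref{prop:sio} and is proved by a Coifman--Fefferman good-$\lambda$ argument comparing $\|Tf(x)\|_{\mathcal{H}}$ to $Mf(x)$ (available from Lemma~\ref{prop:conv}). The main technical point is the H\"ormander estimate in $\mathcal{H}$-norm; once that is established, the rest rests on well-known weighted singular integral machinery.
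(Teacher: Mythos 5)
Your approach is correct and in fact matches the paper's underlying viewpoint: the paper explicitly notes that its sketch ``is implicitly based on the idea that $g_{\psi}(f)$ can be regarded as a vector-valued singular integral,'' and you have simply made that viewpoint literal. The difference lies in the mechanism used to pass from kernel regularity to weighted bounds. The paper proves a pointwise sharp-maximal-function estimate $M^{\#}(g_{\psi}(f)^{\delta})\leq C_{\delta}(Mf)^{\delta}$, adapting the argument of Cruz-Uribe and P\'erez, and for this it needs only the one kernel smoothness estimate
\begin{equation*}
\Bigl(\int_{0}^{\infty}|\psi_{t}(x+h)-\psi_{t}(x)|^{2}\,\frac{dt}{t}\Bigr)^{1/2}\leq C\,\frac{|h|^{1/2}}{|x|^{n+1/2}},\qquad |x|>2|h|,
\end{equation*}
together with the unweighted $L^{2}$ and weak $(1,1)$ bounds for $g_{\psi}$. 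You instead package the size and H\"ormander conditions on the $\mathcal{H}$-valued kernel and cite the weighted theorem for Hilbert-space-valued Calder\'on--Zygmund operators. Both are standard, rigorous routes; the central technical content --- the $\mathcal{H}$-norm regularity of the kernel, obtained by splitting the $t$-integral and using Schwartz decay --- is identical. Two small remarks: first, your route requires the $L^{2}(\rn;\mathcal{H})$ to $L^{2}(\rn;\mathcal{H})$-type weighted theorem, which you should cite explicitly (it appears, for instance, in Garc\'{\i}a-Cuerva and Rubio de Francia, Ch.~V); second, while the paper only proves the weaker decay $|h|^{1/2}/|x|^{n+1/2}$ (using the trick of bounding $|a-b|^{2}\leq C|a-b|$ since $\psi$ is bounded), a direct application of the mean value theorem to $|\psi_{t}(x+h)-\psi_{t}(x)|^{2}$ gives the stronger pointwise estimate $\|K(x+h)-K(x)\|_{\mathcal{H}}\leq C|h|/|x|^{n+1}$, which also satisfies H\"ormander's integral condition, so your claim that this step ``reduces to elementary pointwise estimates'' is justified.
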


\begin{proof}
A direct proof of Lemma \ref{lemma:gfunction} is given by Wilson~\cite%
{wilsonP}. Here we sketch a proof that is implicitly based on the idea that%
\begin{equation*}
g_{\psi }(f)(x)=\left( \int_{0}^{\infty }|\psi _{t}\ast f(x)|^{2}\,\frac{dt}{%
t}\right) ^{1/2}
\end{equation*}
can be regarded as a vector-valued singular integral. 

By a standard argument in the theory of weighted norm inequalities, it will
suffice to prove that for all $0<\delta <1$ there exists a constant $%
C_{\delta }$ such that for $x\in {\mathbb{R}^{n}}$, 
\begin{equation}
M^{\#}(g_{\psi }(f)^{\delta })(x)\leq C_{\delta }Mf(x)^{\delta },
\label{eqn:sharp}
\end{equation}%
where $M$ is the Hardy-Littlewood maximal operator and $M^{\#}$ is the sharp
maximal operator of Fefferman and Stein.

The proof of inequality \eqref{eqn:sharp} is readily gotten by adapting the
argument in Cruz-Uribe and P\'{e}rez \cite[Lemma~1.6]{cruzuribe-perez02} for
the $g_{\lambda }^{\ast }$ operator. The changes are straightforward, so
here we only indicate the key steps. (Also see \'{A}lvarez and P\'{e}rez 
\cite{alvarez-perez94}.) Our assumptions on $\psi $ guarantee that $g_{\psi }
$ is bounded on $L^{2}$ and is weak $(1,1)$. (See \cite[p.~505]%
{garciacuerva-rubiodefrancia85}.) Therefore, we only have to prove that if $%
|x|>|h|/2$, then 
\begin{equation}
\left( \int_{0}^{\infty }|\psi _{t}(x+h)-\psi _{t}(x)|^{2}\frac{dt}{t}%
\right) ^{1/2}\leq C\frac{|h|^{1/2}}{|x|^{n+1/2}}.  \label{eqn:gfunction2}
\end{equation}
This is the vector-valued analog of the gradient condition in \cite[Lemma~1.6%
]{cruzuribe-perez02}. To prove \eqref{eqn:gfunction2}, note that since $\psi 
$ is a Schwartz function it is bounded. Hence, by the mean value theorem,
for each $x$ and $t$ there exists $\theta $, $0<\theta <1$, such that 
\begin{align*}
\int_{0}^{\infty }|\psi _{t}(x+h)-\psi _{t}(x)|^{2}\frac{dt}{t}&
=\int_{0}^{\infty }|\psi \left( \frac{x+h}{t}\right) -\psi \left( \frac{x}{t}%
\right) |^{2}\frac{dt}{t^{2n+1}} \\
& \leq C\int_{0}^{\infty }|\psi \left( \frac{x+h}{t}\right) -\psi \left( 
\frac{x}{t}\right) |\frac{dt}{t^{2n+1}} \\
& \leq C|h|\int_{0}^{\infty }|{\nabla }\psi \left( \frac{x+\theta h}{t}%
\right) |\frac{dt}{t^{2n+2}} \\
& =C|h|\int_{0}^{|x|}+C|h|\int_{|x|}^{\infty }
\end{align*}%
Since $|x+\theta h|>|x|/2$ and $|{\nabla }\psi (y)|\leq C|y|^{-2n-2}$, 
\begin{equation*}
|h|\int_{0}^{|x|}|{\nabla }\psi \left( \frac{x+\theta h}{t}\right) |\frac{dt%
}{t^{2n+2}}\leq C|h|\int_{0}^{|x|}|x|^{-2n-2}\,dt=C\frac{|h|}{|x|^{2n+1}}.
\end{equation*}%
We estimate the second integral in the same way, using that $|{\nabla }\psi
(x)|\leq C|x|^{-2n}$. Taking the square root we get \eqref{eqn:gfunction2}.
\end{proof}

\medskip

The next proposition is a key estimate in our proof of Theorem~\ref%
{theorem:WKato}. It yields a square function estimate given size and
regularity assumptions on the kernel of the operator. In the unweighted
case, this result can be found in, for example, Grafakos~\cite[p.~643]%
{grafakos04} or Hofmann~\cite{hofmann01}; in a somewhat different form it
can be found in Auscher and Tchamitchian~\cite{auscher-tchamitchian98}.

\begin{proposition}
\label{prop:schur} Let $w\in A_2$ and let $\psi$ be a radial Schwartz
function such that $\widehat{\psi}(0)=0$ and 
\begin{equation}  \label{eqn:repro1}
\int_0^\infty \widehat{\psi}(t)^2\,\frac{dt}{t} = 1.
\end{equation}
Let $Q_tf(x) = \psi_t*f(x)$. Given a family of sublinear operators $\{R_t\}$%
, suppose that each $R_t$ is bounded on $L^2(w)$, and for all $t,\,s>0$ the
composition $R_tQ_s$ is bounded on $L^2(w)$ and for some $\alpha >0$, 
\begin{equation}  \label{eqn:alpha}
\| R_{t}Q_{s}\|_{{\mathcal{B}}(L^2(w))}\leq K\min \left( \frac{t}{s},\frac{s%
}{t}\right)^{\alpha }.
\end{equation}
Then the family $\{R_{t}\}$ satisfies the square function estimate 
\begin{equation}  \label{eqn:schur}
\int_{0}^{\infty }\int_{{\mathbb{R}^n}}| R_tf(x)|^{2}w(x)\,dx\frac{dt}{t}
\leq K\cdot C(n,\psi,\alpha,[w]_{A_2})\|f\|_{L^2(w)}^2.
\end{equation}
\end{proposition}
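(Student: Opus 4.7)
The plan is to run a standard Schur-type almost-orthogonality argument via the Calder\'on reproducing formula associated to $\psi$, with the Plancherel step of the unweighted proof replaced by the weighted Littlewood--Paley inequality of Lemma \ref{lemma:gfunction}. Since $\psi$ is radial, the normalization \eqref{eqn:repro1} together with the change of variable $r = s|\xi|$ gives $\int_0^\infty \widehat{\psi}(s\xi)^2\,ds/s = 1$ for every $\xi \neq 0$, so by Plancherel
\[
f = \int_0^\infty Q_s^2 f\,\frac{ds}{s}
\]
for $f$ in the Schwartz class (and, after a density argument, for general $f \in L^2(w)$). Applying $R_t$, then invoking Minkowski's integral inequality on $L^2(w)$ together with hypothesis \eqref{eqn:alpha}, yields
\[
\|R_t f\|_{L^2(w)} \leq K\int_0^\infty \min\!\left(\frac{t}{s},\frac{s}{t}\right)^{\!\alpha} \|Q_s f\|_{L^2(w)}\,\frac{ds}{s}.
\]

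Next I would square this in $t$ and integrate against $dt/t$, applying Schur's test on the measure space $((0,\infty),dt/t)$ with the symmetric kernel $\min(t/s,s/t)^\alpha$. The substitution $u=s/t$ gives the uniform bound $\int_0^\infty \min(t/s,s/t)^\alpha\,ds/s = 2/\alpha$, so Schur's lemma controls the $L^2(dt/t)$-norm of $t\mapsto \|R_tf\|_{L^2(w)}$ by $(2K/\alpha)\,\|s\mapsto \|Q_sf\|_{L^2(w)}\|_{L^2(ds/s)}$. By Fubini this last quantity equals the weighted $g$-function of $f$ associated to $\psi$, which Lemma \ref{lemma:gfunction} bounds by $C(n,\psi,[w]_{A_2})\,\|f\|_{L^2(w)}$. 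A final application of Fubini on the left-hand side then yields \eqref{eqn:schur}, with constant proportional to $K/\alpha$ as required.

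The main technical point, hiding in the first step, is justifying the Calder\'on reproducing formula in a form compatible with the hypotheses on $R_t$, namely the interchange
\[
R_t\int_0^\infty Q_s^2 f\,\frac{ds}{s} = \int_0^\infty R_tQ_s(Q_s f)\,\frac{ds}{s}
\]
as an identity in $L^2(w)$. For $f\in C_c^\infty$ I would handle this by truncating the integral to $\varepsilon < s < 1/\varepsilon$, where the interchange is trivial by boundedness of $R_t$; the uniform $L^2(w)$ control of the tails follows from the same Minkowski/Schur estimate applied to the truncated kernel (which is finite even without the full hypothesis, since $Q_s f$ is rapidly decaying in $s$ on $C_c^\infty$), allowing passage to the limit. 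The general case then follows by density of $C_c^\infty$ in $L^2(w)$, which is standard for $A_2$ weights. Once this interchange is legitimate, the remainder of the proof is the routine Schur/Littlewood--Paley combination described above.
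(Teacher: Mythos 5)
Your approach is essentially the paper's: Calder\'on reproducing formula, Minkowski plus hypothesis \eqref{eqn:alpha}, Schur's test on $((0,\infty),dt/t)$, and the weighted Littlewood--Paley bound of Lemma~\ref{lemma:gfunction}. The one place you slip is in stating the interchange as an \emph{equality},
\[
R_t\int_0^\infty Q_s^2 f\,\frac{ds}{s} = \int_0^\infty R_tQ_s(Q_s f)\,\frac{ds}{s},
\]
which cannot hold: $R_t$ is only assumed sublinear, so no such identity is available. What sublinearity actually gives (via Riemann-sum approximation of the truncated Bochner integral) is the pointwise \emph{inequality}
\[
|R_t f_j(x)| \le \int_{1/j}^{j} |R_tQ_s(Q_s f)(x)|\,\frac{ds}{s},
\]
and that is all Minkowski needs. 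The paper avoids the whole issue by invoking Wilson's weighted reproducing formula, Lemma~\ref{lemma:reproducing}, whose truncations $f_j$ carry both a spatial cutoff $\chi_{B_j}$ and a restriction to $s\in[1/j,j]$, with the convergence $f_j\to f$ taken in $L^2(w)$; it then uses $L^2(w)$-boundedness of $R_t$ to pass to the limit and Fatou to bring the $\liminf$ past the outer integral.

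Your Plancherel-plus-density route is workable, but two things need to be filled in. First, the density argument must show that the truncated integrals converge to $f$ in $L^2(w)$, not merely in $L^2$; this is precisely the nontrivial content of Lemma~\ref{lemma:reproducing}, so either cite it or supply the $A_2$ argument. Second, rewrite the interchange as the sublinear inequality above and route the limit through Fatou, as the paper does. With those two corrections the argument coincides with the one in the paper.
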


The proof of Proposition \ref{prop:schur} requires a weighted version of the
Calder\'on reproducing formula given by Wilson~\cite{wilsonP}.

\begin{lemma}
\label{lemma:reproducing} For all $w\in A_2$ and $f\in L^2(w)$, 
\begin{equation*}
\int_0^\infty Q_t^2f(x)\frac{dt}{t} = f(x),
\end{equation*}
where this equality is understood as follows: for each $j>1$, let $B_j$ be
the ball centered at $0$ of radius $j$, and define the function 
\begin{equation*}
f_j(x) = \int_{1/j}^j Q_t(\chi_{B_j}Q_tf)(x) \frac{dt}{t}.
\end{equation*}
Then for each $j$, $f_j \in L^2(w)$ and $\{f_j\}$ converges to $f$ in $%
L^p(w) $.
\end{lemma}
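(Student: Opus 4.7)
The plan is to combine uniform $L^2(w)$ boundedness of the truncation operators $T_j : f\mapsto f_j$ with direct verification of convergence on a dense subspace. Well-definedness of $f_j\in L^2(w)$ is immediate: by Lemma~\ref{prop:conv} each $Q_t$ is bounded on $L^2(w)$, so the integrand $t\mapsto Q_t(\chi_{B_j}Q_tf)$ is a continuous $L^2(w)$-valued function on the compact interval $[1/j,j]$, and the integral exists.

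For the uniform bound I would pair $f_j$ against $g\in L^2(w)$. Since $\psi$ is radial, each $Q_t$ is self-adjoint on $L^2$, so Fubini gives
\begin{equation*}
\langle f_j,g\rangle_w = \int_{1/j}^j\!\!\int_{B_j} Q_tf(y)\,\overline{Q_t(gw)(y)}\,dy\,\frac{dt}{t}.
\end{equation*}
Cauchy--Schwarz in $(y,t)$, after the split $1=w^{1/2}\cdot w^{-1/2}$, yields
\begin{equation*}
|\langle f_j,g\rangle_w|\leq\Bigl(\int_0^\infty\!\!\int|Q_tf|^2 w\,dy\,\tfrac{dt}{t}\Bigr)^{1/2}\Bigl(\int_0^\infty\!\!\int|Q_t(gw)|^2 w^{-1}\,dy\,\tfrac{dt}{t}\Bigr)^{1/2}.
\end{equation*}
Lemma~\ref{lemma:gfunction} bounds the first factor by $C\|f\|_{L^2(w)}$. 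Since $w\in A_2$ if and only if $w^{-1}\in A_2$, Lemma~\ref{lemma:gfunction} applied with weight $w^{-1}$ to $gw\in L^2(w^{-1})$ bounds the second factor by $C\|gw\|_{L^2(w^{-1})}=C\|g\|_{L^2(w)}$. Taking the supremum over unit $g$ gives $\|f_j\|_{L^2(w)}\leq C\|f\|_{L^2(w)}$ uniformly in $j$.

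With this uniform bound in hand, a standard $\varepsilon/3$ argument reduces the claim to verifying $f_j\to f$ on a dense subspace of $L^2(w)$; for this I would take Schwartz $f$ with $\widehat f$ compactly supported in $\mathbb{R}^n\setminus\{0\}$ (dense in $L^2(w)$ by mollification plus a low-/high-frequency cutoff). Split $f-f_j=A_j+B_j$, where $A_j=\int_{(0,1/j)\cup(j,\infty)}Q_t^2 f\,\tfrac{dt}{t}$ is the time-truncation error and $B_j=\int_{1/j}^j Q_t((1-\chi_{B_j})Q_tf)\,\tfrac{dt}{t}$ is the spatial-truncation error. The normalization $\int_0^\infty\widehat\psi(t\xi)^2\,dt/t=1$ for $\xi\neq 0$ together with Plancherel gives $A_j\to 0$ in $L^2$ (in fact $A_j\equiv 0$ for $j$ large when $\widehat\psi$ is a bump, otherwise by dominated convergence in frequency using the Schwartz decay of $\widehat\psi$); Schwartz regularity and the polynomial $A_2$ bound $w(B(0,R))\leq CR^{Cn}$ upgrade this to $L^2(w)$. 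For $B_j$, the Schwartz decay of $f$ and $\psi$ forces $(1-\chi_{B_j})Q_tf(y)=O(j^{-N})$ uniformly for $t\in[1/j,j]$ and $|y|\geq j$, and one further application of $Q_t$ preserves enough decay to give $\|B_j\|_{L^2(w)}\to 0$ by the same volume estimate.

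The main obstacle is the weighted control of $B_j$: as $t$ approaches $j$ the kernel $\psi_t$ spreads spatially, eroding the decay of $Q_t$ of a far-field function, and the $A_2$ weight can grow polynomially at infinity, so the Schwartz decay rates must be tracked uniformly in $t\in[1/j,j]$ to beat the weight. The uniform bound from the first step is exactly what legitimizes restricting attention to such a well-behaved dense subspace.
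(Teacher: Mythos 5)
The paper does not actually prove this lemma: it is quoted from Wilson's work \cite{wilsonP}, so there is no internal argument to compare against, and a self-contained proof along your lines is a reasonable thing to attempt. Your first two steps are sound. The uniform bound is correct: pairing $f_j$ with $g$, using the (unweighted) self-adjointness of $Q_t$ and Fubini, splitting $1=w^{1/2}w^{-1/2}$, and applying Lemma~\ref{lemma:gfunction} once with weight $w$ (to $f$) and once with weight $w^{-1}\in A_2$ (to $gw\in L^2(w^{-1})$) does give $\Vert f_j\Vert_{L^2(w)}\le C\Vert f\Vert_{L^2(w)}$ uniformly in $j$, and then the problem is correctly reduced to convergence on a dense class.

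There are, however, two concrete gaps in the dense-class step. First, your stated justification for the spatial-truncation term $B_j$ is false as written: ``Schwartz decay of $f$ and $\psi$'' does not force $(1-\chi_{B_j})Q_tf(y)=O(j^{-N})$ uniformly for $t\in[1/j,j]$. For a Gaussian $f$ (so $\hat f(0)\neq0$), $t\sim j$ and $|y|\sim j$ one has $Q_tf(y)\approx t^{-n}\psi(y/t)\int f\sim j^{-n}$, and no amount of Schwartz decay beats the spreading of $\psi_t$ at scale $t\sim j$; with worst-case $A_2$ growth $w(B(0,R))\sim R^{2n-}$ this is exactly borderline. What rescues the claim is the frequency localization of your dense class, which you do not invoke: if $\hat f$ is supported in $\{a\le|\xi|\le b\}$, then $\widehat{Q_tf}=\hat\psi(t\cdot)\hat f$ decays like $(ta)^{-N}$ in every Schwartz seminorm for $t\ge1$, while for $t\le1$ one has $|Q_tf(y)|\le C_M(1+|y|)^{-M}$ uniformly; moreover you do not need the outer $Q_t$ to ``preserve decay'' at all, since Minkowski's inequality and the uniform $L^2(w)$-boundedness of $Q_t$ (Lemma~\ref{prop:conv}) reduce $B_j$ to $\int_{1/j}^j\Vert(1-\chi_{B_j})Q_tf\Vert_{L^2(w)}\,\frac{dt}{t}$. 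Second, the density of this class in $L^2(w)$ is not free and is where your quoted bound $w(B(0,R))\le CR^{Cn}$ is genuinely too weak: the low-frequency cutoff error is a convolution $f\ast g_\delta$ with $|f\ast g_\delta(x)|\lesssim\delta^n(1+\delta|x|)^{-N}$, and $\delta^{2n}w(B(0,1/\delta))$ need not tend to zero under the crude estimate $w(B(0,R))\le CR^{2n}$ that $A_2$ gives directly; you need the self-improvement $w\in A_{2-\epsilon}$ (equivalently, reverse doubling) to get $w(B(0,R))=o(R^{2n})$. With these two repairs (and reading ``$L^p(w)$'' in the statement as $L^2(w)$), your argument goes through.
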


\begin{proof}[Proof of Proposition~\textup{\protect\ref{prop:schur}}]
Fix $f\in L^{2}(w)$ and let $f_{j}$ be as in Lemma~\ref{lemma:reproducing}.
Since $R_{t}$ is bounded on $L^{2}(w)$, we have that for each $t>0$, 
\begin{equation*}
\int_{{\mathbb{R}^{n}}}|R_{t}f(x)|^{2}w(x)\,dx=\lim_{j\rightarrow \infty
}\int_{{\mathbb{R}^{n}}}|R_{t}f_{j}(x)|^{2}w(x)\,dx.
\end{equation*}%
Since each $R_{t}$ is sublinear, we have that 
\begin{equation*}
|R_{t}f_{j}(x)|\leq \int_{1/j}^{j}|R_{t}Q_{s}(\chi _{B_{j}}Q_{s}f)(x)|\frac{%
ds}{s}.
\end{equation*}%
Therefore, by Fatou's lemma, Minkowski's inequality, and \eqref{eqn:alpha}, 
\begin{align*}
& \int_{0}^{\infty }\int_{{\mathbb{R}^{n}}}|R_{t}f(x)|^{2}w(x)\,dx\,\frac{dt%
}{t} \\
& \qquad \leq \liminf_{j\rightarrow \infty }\int_{0}^{\infty }\int_{{\mathbb{%
R}^{n}}}|R_{t}f_{j}(x)|^{2}w(x)\,dx\,\frac{dt}{t} \\
& \qquad \leq \liminf_{j\rightarrow \infty }\int_{0}^{\infty }\int_{{\mathbb{%
R}^{n}}}\left( \int_{1/j}^{j}| R_{t}Q_{s}(\chi _{B_{j}}Q_{s}f)(x)| \frac{ds}{%
s}\right) ^{2}w(x)\,dx\,\frac{dt}{t} \\
& \qquad \leq \liminf_{j\rightarrow \infty }\int_{0}^{\infty }\left(
\int_{1/j}^{j}\left( \int_{\mathbb{R}^{n}}|R_{t}Q_{s}(\chi
_{B_{j}}Q_{s}f)(x)|^{2}w(x)\,dx\right) ^{1/2}\frac{ds}{s}\right) ^{2}\frac{dt%
}{t} \\
& \qquad \leq \liminf_{j\rightarrow \infty }K\int_{0}^{\infty }\left(
\int_{1/j}^{j}\min \left( \frac{t}{s},\frac{s}{t}\right) ^{\alpha }\Vert
\chi _{B_{j}}Q_{s}f\Vert _{L^{2}(w)}\frac{ds}{s}\right) ^{2}\frac{dt}{t} \\
& \qquad \leq K\int_{0}^{\infty }\left( \int_{0}^{\infty }\min \left( \frac{t%
}{s},\frac{s}{t}\right) ^{\alpha }\Vert Q_{s}f\Vert _{L^{2}(w)}\frac{ds}{s}%
\right) ^{2}\frac{dt}{t}.
\end{align*}%
For all $s>0$, 
\begin{equation*}
\int_{0}^{\infty }\min \left( \frac{t}{s},\frac{s}{t}\right) ^{\alpha }\frac{%
dt}{t}=\int_{0}^{\infty }\min \left( u,\frac{1}{u}\right) ^{\alpha }\frac{du%
}{u}=C(\alpha )<\infty ,
\end{equation*}%
and the same is true if we reverse the roles of $s$ and $t$. Therefore, if
we apply Schwartz' inequality, Fubini's theorem and Lemma~\ref%
{lemma:gfunction} we get that 
\begin{align*}
& \int_{0}^{\infty }\left( \int_{0}^{\infty }\min \left( \frac{t}{s},\frac{s%
}{t}\right) ^{\alpha }\Vert Q_{s}f\Vert _{L^{2}(w)}\frac{ds}{s}\right) ^{2}%
\frac{dt}{t} \\
& \qquad \leq C(\alpha )\int_{0}^{\infty }\left( \int_{0}^{\infty }\min
\left( \frac{t}{s},\frac{s}{t}\right) ^{\alpha }\frac{ds}{s}\right) \left(
\int_{0}^{\infty }\min \left( \frac{t}{s},\frac{s}{t}\right) ^{\alpha }\Vert
Q_{s}f\Vert _{L^{2}(w)}^{2}\frac{ds}{s}\right) \frac{dt}{t} \\
& \qquad \leq C(\alpha )\int_{0}^{\infty }\left( \int_{0}^{\infty }\min
\left( \frac{t}{s},\frac{s}{t}\right) ^{\alpha }\frac{dt}{t}\right) \Vert
Q_{s}f\Vert _{L^{2}(w)}^{2}\frac{ds}{s} \\
& \qquad \leq C(\alpha ,\psi ,n,[w]_{A_{2}})\Vert f\Vert _{L^{2}(w)}^{2}.
\end{align*}
\end{proof}

Operator norm bounds such as those in~\eqref{eqn:alpha} can generally be
deduced in the unweighted case using the Fourier transform or kernel
estimates. We will make use of the following result from Grafakos~\cite[%
Theorem~8.6.3]{grafakos04}.

\begin{lemma}
\label{lemma:reg-family} Let $\{T_t\}$, $t>0$ be a family of integral
operators such that $T_t1=0$ and such that the kernels $K_t$ satisfy 
\begin{gather}
|K_t(x,y)| \leq \frac{C}{t^n(1+t^{-1}|x-y|)^{n+1}},  \label{eqn:grafakos1} \\
|K_t(x,y)-K_t(x,y^{\prime })| \leq \frac{C|y-y^{\prime }|}{t^{n+1}}.
\label{eqn:grafakos2}
\end{gather}
Then for some $\alpha>0$, 
\begin{equation*}
\|T_t\|_{\mathbf{B}(L^2)} \leq C\min\left(\frac{t}{s},\frac{s}{t}%
\right)^\alpha.
\end{equation*}
\end{lemma}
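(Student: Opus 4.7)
The plan is to establish the bound by the classical almost-orthogonality argument. I interpret the stated inequality as a bound on the composition $\|T_t Q_s\|_{\mathcal{B}(L^2)}$; the parameter $s$ on the right-hand side is otherwise unexplained, and with $Q_s f = \psi_s \ast f$ this matches the way the lemma is invoked in Proposition~\ref{prop:schur}. The Schwartz kernel of the composition is
\[
L(x,z) = \int_{\mathbb{R}^n} K_t(x,y)\,\psi_s(y-z)\,dy,
\]
and the goal is to obtain a pointwise bound of the form
\[
|L(x,z)| \leq C\min(s/t, t/s)^{\alpha}\,r^{-n}\bigl(1 + |x-z|/r\bigr)^{-(n+1)}, \qquad r=\max(s,t),
\]
for some $\alpha \in (0,1)$, from which Schur's test delivers the $L^2$ bound.

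By symmetry I split into the cases $s \leq t$ and $s > t$, in each of which one of the two cancellation hypotheses does the work. For $s \leq t$, since $\widehat{\psi}(0) = 0$ implies $\int \psi_s = 0$, I rewrite
\[
L(x,z) = \int_{\mathbb{R}^n} \bigl[K_t(x,y) - K_t(x,z)\bigr]\,\psi_s(y-z)\,dy,
\]
and split the $y$-integral at $|y-z| \sim t$. On the inner piece, the Lipschitz hypothesis $|K_t(x,y) - K_t(x,z)| \leq C|y-z|/t^{n+1}$ together with $\int |w|\,|\psi_s(w)|\,dw = O(s)$ produces a gain of order $s/t$, and the spatial decay in $|x-z|$ is recovered from the size bound on $K_t(x,z)$ (and, where needed, on $K_t(x,y)$ via $|x-y| \geq |x-z| - |y-z|$). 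On the outer piece the Schwartz decay of $\psi_s$ yields any power of $s/t$ with the required spatial decay. For $s > t$, I use instead the complementary cancellation $\int K_t(x,y)\,dy = 0$ from $T_t 1 = 0$ to write
\[
L(x,z) = \int_{\mathbb{R}^n} K_t(x,y)\,\bigl[\psi_s(y-z) - \psi_s(x-z)\bigr]\,dy,
\]
apply the mean value theorem to $\psi_s$, and combine with the integrable size bound on $K_t$ in $y$; the Schwartz decay of $\psi_s$ at scale $s$ now supplies the spatial decay, and a similar splitting produces the gain $t/s$.

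The main obstacle is the bookkeeping required to extract the gain $\min(s/t,t/s)^{\alpha}$ \emph{and} the spatial decay in $|x-z|$ from the same estimate: each cancellation identity by itself yields a pointwise bound that is small in the ratio of scales but only flat in $|x-z|$, while the kernel size bounds give spatial decay but no smallness. The interpolation between the two is carried out by choosing the splitting radius in the $y$-integral as a suitable intermediate scale, and this trade-off also fixes the value of $\alpha \in (0,1)$. Once the pointwise bound on $L(x,z)$ is in place, both $\sup_x \int |L(x,z)|\,dz$ and $\sup_z \int |L(x,z)|\,dx$ are dominated by $C\min(s/t,t/s)^{\alpha}$, and Schur's test concludes.
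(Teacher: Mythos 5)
The paper does not prove this lemma; it cites it as \cite[Theorem~8.6.3]{grafakos04}, so there is no in-paper proof to compare against. Your reading of the (garbled) statement is certainly the intended one: as written, $s$ appears nowhere on the left, and in both places the lemma is invoked the conclusion is $\|T_tQ_s\|_{{\mathcal B}(L^2)}\leq C\min(t/s,s/t)^\alpha$ with $Q_sf=\psi_s\ast f$. Your overall strategy --- compute the kernel $L(x,z)$ of $T_tQ_s$, exploit the vanishing mean of $\psi_s$ when $s\leq t$ and the condition $T_t1=0$ when $s>t$, obtain a pointwise bound carrying both the smallness in the scale ratio and spatial decay, and finish with Schur's test --- is exactly the standard almost-orthogonality argument and is the right proof.

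Two quantitative cautions. First, the target pointwise bound you set up, $|L(x,z)|\leq C\min(s/t,t/s)^\alpha\,r^{-n}(1+|x-z|/r)^{-(n+1)}$, is not achievable under the stated hypotheses: the Lipschitz estimate \eqref{eqn:grafakos2} carries no decay in $|x-y|$, so the bound $|L|\leq Cs/t^{n+1}$ (small, flat) and the bound $|L|\leq Ct^{-n}(1+|x-z|/t)^{-(n+1)}$ (decaying, not small) can only be combined by a geometric mean $|L|\leq A^{1-\theta}B^{\theta}$, which yields decay exponent $\theta(n+1)$; taking $\theta\in(n/(n+1),1)$ gives a decay exponent strictly between $n$ and $n+1$ (integrable, which is all Schur's test needs) and a gain $\alpha=1-\theta\in(0,1/(n+1))$. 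You do flag that the choice of splitting radius ``fixes $\alpha$,'' so this is a matter of making the trade-off explicit rather than a conceptual error. Second, in the regime $s>t$ where you use $T_t1=0$ and the mean value theorem on $\psi_s$, note that $\int|K_t(x,y)|\,|x-y|\,dy$ is logarithmically divergent because \eqref{eqn:grafakos1} decays only like $(n+1)$; you must cap $|x-y|$ against $s$ and split there, which introduces a factor $(t/s)\log(s/t)$ rather than $t/s$ --- again harmless (it is still $\lesssim(t/s)^{\alpha'}$ for any $\alpha'<1$) but worth tracking. With these adjustments your argument is complete and is the standard one.
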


In order to get this estimate on $L^2(w)$, $w\in A_2$, we use the following
clever application of interpolation due to Duoandikoetxea and Rubio de
Francia~\cite{duoandikoetxea-rubiodefrancia86}.

\begin{lemma}
\label{lemma:interpol} Suppose that a sublinear operator $T$ is bounded on $%
L^{2}(w)$ for all $w\in A_{2}$, with $\Vert T\Vert _{{\mathcal{B}}(L^{2}(w))}
$ depending only on $[w]_{A_{2}}$ and dimension $n$. Then for any $w\in A_{2}
$, there exists $\theta $, $0<\theta <1$, that depends on $[w]_{A_{2}}$ such
that, 
\begin{equation*}
\Vert T\Vert _{{\mathcal{B}}(L^{2}(w))}\leq C(n,[w]_{A_{2}})\Vert T\Vert _{{%
\mathcal{B}}(L^{2})}^{\theta }.
\end{equation*}
\end{lemma}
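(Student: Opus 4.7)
The plan is to interpolate $L^2(w)$ between the unweighted space $L^2$ and the space $L^2(w^s)$ for a suitably chosen exponent $s>1$, exploiting the ``slack'' in the $A_2$ class that is provided by the reverse H\"older inequality.

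First, by the standard self-improvement property of Muckenhoupt weights, there exists $s=s(n,[w]_{A_2})>1$ such that $w^s\in A_2$, with $[w^s]_{A_2}\leq C(n,[w]_{A_2})$. By the hypothesis applied to the weight $w^s$,
\begin{equation*}
\|T\|_{\mathcal{B}(L^2(w^s))}\leq C(n,[w]_{A_2}).
\end{equation*}
Next, note the elementary identity $w=1^{1-1/s}\cdot(w^s)^{1/s}$, so that, with respect to the logarithmic scaling of weights, $w$ lies between the constant weight $1$ and the weight $w^s$. Applying the Stein--Weiss interpolation theorem with change of measure to $T$ acting on $L^2(v)$ for $v\in\{1,w^s\}$, with interpolation parameter $\theta'=1/s$, yields
\begin{equation*}
\|T\|_{\mathcal{B}(L^2(w))}\leq \|T\|_{\mathcal{B}(L^2)}^{1-1/s}\,\|T\|_{\mathcal{B}(L^2(w^s))}^{1/s}.
\end{equation*}
Combining the two bounds and setting $\theta=1-1/s\in(0,1)$ (which depends on $[w]_{A_2}$ through $s$) gives
\begin{equation*}
\|T\|_{\mathcal{B}(L^2(w))}\leq C(n,[w]_{A_2})\|T\|_{\mathcal{B}(L^2)}^{\theta},
\end{equation*}
as required.

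The main obstacle is that the Stein--Weiss complex-interpolation theorem is classically stated for linear operators, while the lemma is formulated for sublinear $T$. To address this, I would first deduce weak-type $(2,2)$ bounds on $L^2(v)$ for $v\in\{1,w^s\}$ from the strong-type bounds via Chebyshev's inequality, and then invoke the Marcinkiewicz-style interpolation with change of measure (which is valid for sublinear operators and yields the same geometric-mean estimate). Any absolute constant arising from that interpolation can be absorbed into $C(n,[w]_{A_2})$.
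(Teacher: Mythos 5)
Your argument is essentially the paper's: self-improvement of $A_2$ yields $s>1$ with $w^s\in A_2$ and $[w^s]_{A_2}$ controlled by $[w]_{A_2}$, then $w$ is written as the intermediate weight between $1$ and $w^s$ with parameter $1/s$, and Stein--Weiss interpolation with change of measure gives the geometric-mean bound with $\theta = 1-1/s$. The paper cites Stein--Weiss and Bergh--L\"ofstr\"om and stops there.

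The one point worth flagging is your remark about sublinearity, which is a genuine subtlety the paper glosses over (the lemma is indeed stated for sublinear $T$). However, your proposed repair does not work as stated: passing to weak-type $(2,2)$ bounds on $L^2$ and $L^2(w^s)$ and then invoking Marcinkiewicz fails because both endpoints sit at the same Lebesgue exponent $p=2$; Marcinkiewicz interpolation upgrades weak to strong only between two \emph{distinct} exponents $p_0<p<p_1$, so there is no ``interior'' in which to land. The correct way to handle sublinear $T$ is either (a) to use \emph{real} interpolation directly on the strong-type bounds: the $K$-method applies to sublinear operators, and $(L^2,\,L^2(w^s))_{\theta,2}=L^2(w^{s\theta})$ with the expected geometric-mean norm estimate; or (b) to linearize: in every application in this paper the sublinear operator has the form $Tf(x)=\int |S_h f(x)|\,d\mu(h)$ with each $S_h$ linear, so for given $f$ one can choose a measurable unimodular $\sigma(h,x)$ with $Tf(x)=\int \sigma(h,x)\,S_hf(x)\,d\mu(h)=:T_\sigma f(x)$, each $T_\sigma$ is linear and dominated by $T$ applied to $|f|$, and Stein--Weiss applies to each $T_\sigma$ uniformly. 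Either route closes the gap; the weak-type/Marcinkiewicz route does not.
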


\begin{proof}
This is a consequence of the structural properties of $A_{2}$ weights and
the theory of interpolation with change of measure due to Stein and Weiss~ 
\cite{stein-weiss58} (see also Bergh and L\"{o}fstr\"{o}m~\cite%
{berg-lofstrom76}). Given $w\in A_{2}$, there exists $s>1$, depending only
on $[w]_{A_{2}}$, such that $w^{s}\in A_{2}$ with $[w^{s}]_{A_{2}}$
depending only on $[w]_{A_{2}}$. Hence, by hypothesis $\Vert T\Vert _{{%
\mathcal{B}}(L^{2}(w^{s}))}$ is bounded by a constant that depends only on $%
[w]_{A_{2}}$ and $n$. Choose $\theta $ such that $1-\theta =1/s$. Then $%
w=1^{\theta }w^{s(1-\theta )}$, so by interpolation with change of measure, 
\begin{equation*}
\Vert T\Vert _{{\mathcal{B}}(L^{2}(w))}\leq \Vert T\Vert _{{\mathcal{B}}%
(L^{2}(w^{s}))}^{1-\theta }\Vert T\Vert _{{\mathcal{B}}(L^{2})}^{\theta
}\leq C(n,[w]_{A_{2}})\Vert T\Vert _{{\mathcal{B}}(L^{2})}^{\theta }.
\end{equation*}
\end{proof}

\section{Two Square Function Inequalities}

\label{section:two-ineq}

In this section we prove two weighted square functions inequalities. The
first is for the operator $V_{t}=\frac{d}{dt}e^{-t^{2}{\mathcal{L}}_{w}}$
and is used in Sections~\ref{section:Carleson} and~\ref{section:Tb} below.

\begin{lemma}
\label{lemma-sq-Taylor}Let $p\in C_{c}^{\infty }$ be a non-negative, radial,
decreasing function such that $\Vert p\Vert _{1}=1$ and $\mathrm{{\mathrm{%
supp}}}(p)\subset B_{1}(0)$, and for $f\in H^{1}\left( w\right) $ let%
\begin{equation*}
G_{t}(x,y)=f(y)-f(x)-(y-x)\cdot (p_{t}\ast {\nabla }f)(x).
\end{equation*}%
Then there exists $C>0$ depending only on $n$, $w$ and the constants in the
Gaussian estimates, such that%
\begin{equation}
\int_{0}^{\infty }\int_{\mathbb{R}^{n}}\left\vert \int_{\mathbb{R}%
^{n}}V_{t}(x,y)G_{t}(x,y)\,dy\right\vert ^{2}\,w(x)\,dx\,\frac{dt}{t}\leq
C\Vert {\nabla }f\Vert _{L^{2}(w)}^{2}.  \label{eqn:ugly-sqf}
\end{equation}
\end{lemma}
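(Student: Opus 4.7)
The plan is to use the cancellation $V_t 1=0$ from Lemma~\ref{lemma:gaussian-prime} to rewrite the integrand as a linear operator acting on $\nabla f$, and then to apply Proposition~\ref{prop:schur}. Set $\gamma_t(x):=\int_{\mathbb{R}^n}V_t(x,y)(y-x)\,dy$. Since $\int V_t(x,y)\,dy=0$,
\[
\int_{\mathbb{R}^n}V_t(x,y)\,G_t(x,y)\,dy \;=\; V_tf(x)\;-\;\gamma_t(x)\cdot(p_t\ast\nabla f)(x) \;=:\;T_tf(x).
\]
A direct computation shows that $T_t$ annihilates affine functions: if $f(y)=a\cdot y+b$ then $p_t\ast\nabla f=a$, and $\gamma_t(x)\cdot a=\int V_t(x,y)\,a\cdot(y-x)\,dy=V_tf(x)$. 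Hence $T_tf$ depends only on $g:=\nabla f$. Writing $f(y)-f(x)=\int_0^1(y-x)\cdot g(x+s(y-x))\,ds$ and changing variables $z=x+s(y-x)$ in the resulting integral yields $T_tf(x)=\widetilde T_t g(x)$, where $\widetilde T_t$ is the vector-valued linear operator with kernel
\[
K_t(x,z)\;=\;\int_0^1 s^{-n-1}\,V_t\!\left(x,\,x+\tfrac{z-x}{s}\right)(z-x)\,ds\;-\;p_t(x-z)\,\gamma_t(x),
\]
and one checks directly that $\int K_t(x,z)\,dz=0$, i.e., $\widetilde T_t$ annihilates constants.

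The remaining task is to verify the hypotheses of Proposition~\ref{prop:schur} for $\{\widetilde T_t\}$ (which is immediately applicable componentwise to vector-valued input). For uniform $L^2(w)$-boundedness, the Gaussian bounds in Lemma~\ref{lemma:gaussian-prime} yield the Schur-type integrability $\int|K_t(x,z)|\,dz\le C$ (and symmetrically in $x$) uniformly in $t$; combined with the maximal function dominance $\sup_t|p_t\ast g|\le CMg$ from Lemma~\ref{prop:conv}, this gives $\sup_t\|\widetilde T_t\|_{\mathcal{B}(L^2(w))}<\infty$. For the off-diagonal estimate $\|\widetilde T_tQ_s\|_{\mathcal{B}(L^2(w))}\le C\min(t/s,s/t)^\alpha$, Lemma~\ref{lemma:interpol} reduces the problem to the unweighted case, where the H\"older estimate of Lemma~\ref{lemma:gaussian-prime} transfers to kernel regularity for $K_t$. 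When $s\le t$ one exploits the cancellation $\widehat\psi(0)=0$ of $Q_s$ together with the $x$-regularity of $K_t$; when $t\le s$ one uses $\widetilde T_t 1=0$ together with the smoothness and decay of $\psi$. In each regime, kernel manipulations in the spirit of Lemma~\ref{lemma:reg-family} produce the required power gain $\alpha>0$.

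Applying Proposition~\ref{prop:schur} to $\{\widetilde T_t\}$ with input $g=\nabla f$ then gives \eqref{eqn:ugly-sqf}. The main technical obstacle is the off-diagonal decay: the first term of $K_t(x,z)$ inherits from the change of variable a mild $|z-x|^{-(n-1)}$ singularity at the diagonal (locally integrable but not uniformly bounded by $t^{-n}$), so the two pieces of $K_t$ cannot be estimated independently. Splitting the inner $s$-integral into the regimes $s\le|z-x|/t$ and its complement, and tracking how the $p_t\gamma_t$ term partially cancels the near-diagonal behaviour, is the expected resolution.
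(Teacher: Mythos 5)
Your high-level strategy is the same as the paper's (use $V_t1=0$ to reduce the left side to a square function applied to $\nabla f$, then invoke Proposition~\ref{prop:schur}), but your kernel representation is genuinely different: you use the fundamental-theorem-of-calculus parametrization $f(y)-f(x)=\int_0^1(y-x)\cdot\nabla f(x+s(y-x))\,ds$ directly, whereas the paper first decomposes the Gaussian tail of $V_t$ into dyadic annuli and then substitutes the Riesz potential identity $f=I_1(R\cdot\nabla f)$, which turns each annular piece into a genuine convolution in $x$ with the auxiliary function $F=R\cdot\nabla f$. That distinction is not cosmetic. Your uniform $L^2(w)$-boundedness step is essentially sound (your kernel's first piece obeys $|K_t^{(1)}(x,z)|\lesssim |z-x|^{1-n}t^{-1}\int_{|z-x|/t}^\infty u^{n-1}e^{-cu^2}\,du$, which is dominated by a radial decreasing $L^1$ convolution profile, so Lemma~\ref{prop:conv} applies exactly as the paper's bound \eqref{eqn:rad-bdd} on $L$ is used — though note the Schur $L^1$ test alone gives only unweighted $L^2$, so it is the domination by a radial decreasing profile that matters, not the Schur test as you phrase it).

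The genuine gap is in the off-diagonal estimate $\|\widetilde T_tQ_s\|_{\mathcal{B}(L^2(w))}\lesssim\min(t/s,s/t)^\alpha$. The paper proves the analogous bound \eqref{eqn:Restimate4} by Plancherel, which works precisely because after the Riesz potential substitution every operator in sight is a Fourier multiplier with an explicit symbol; that route is unavailable to you because $K_t(x,z)$ is not a convolution kernel. You would instead have to verify kernel conditions of the type in Lemma~\ref{lemma:reg-family}, but $K_t^{(1)}$ fails \eqref{eqn:grafakos1}: near the diagonal it behaves like $|z-x|^{1-n}/t$, which is unbounded as $z\to x$ (for $n\ge 2$), whereas \eqref{eqn:grafakos1} requires a uniform $t^{-n}$ bound. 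Your proposed fix — that the $p_t(x-z)\gamma_t(x)$ term ``partially cancels the near-diagonal behaviour'' — cannot work pointwise, since that second piece is $O(t^{-n})$ near the diagonal while $K_t^{(1)}$ diverges; the only cancellation available between the two pieces is the moment condition $\int K_t(x,z)\,dz=0$, which is not a pointwise statement. Worse, the H\"older regularity you want to borrow from Lemma~\ref{lemma:gaussian-prime} degrades under the change of variables $y=x+(z-x)/s$: a perturbation $z\to z'$ becomes $|y-y'|=|z-z'|/s$, so the small-$s$ part of the $s$-integral destroys the modulus-of-continuity estimate needed for \eqref{eqn:grafakos2}. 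Thus neither regularity hypothesis of Lemma~\ref{lemma:reg-family} is satisfied, and your sketch does not supply a replacement. This is exactly the obstruction the paper's Riesz-potential/Fourier route is designed to sidestep; without that device (or some other genuinely new idea to handle the diagonal singularity), the off-diagonal power gain is unproved and the argument is incomplete.
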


The second square function inequality is needed in the last step of the
proof in Section~\ref{section:test-func}. In the unweighted case this
inequality is due to Journ\'e~\cite{journe91}; our proof is adapted from
that of Auscher and Tchamitchian~\cite{auscher-tchamitchian98} as explicated
by Grafakos~\cite{grafakos04}.

Define the averaging operator $A_{t}$ by 
\begin{equation*}
A_{t}f(x)=%
\mathchoice {{\setbox0=\hbox{$\displaystyle{\textstyle
-}{\int}$} \vcenter{\hbox{$\textstyle -$}}\kern-.5\wd0}} {{\setbox0=\hbox{$\textstyle{\scriptstyle -}{\int}$} \vcenter{\hbox{$\scriptstyle -$}}\kern-.5\wd0}} {{\setbox0=\hbox{$\scriptstyle{\scriptscriptstyle -}{\int}$} \vcenter{\hbox{$\scriptscriptstyle -$}}\kern-.5\wd0}} {{\setbox0=\hbox{$\scriptscriptstyle{\scriptscriptstyle -}{\int}$} \vcenter{\hbox{$\scriptscriptstyle -$}}\kern-.5\wd0}}%
\!\int_{Q_{t}(x)}f(y)\,dy,
\end{equation*}%
where $Q_{t}(x)$ is the unique dyadic cube containing $x$ such that $t\leq
\ell (Q_{t}(x))<2t$. If $2^{-k-1}<t\leq 2^{-k}$, and $Q_{j}^{k}$ denotes the
collection of dyadic cubes of side length $2^{-k}$, then the kernel of $%
A_{t} $ is 
\begin{equation*}
A_{t}(x,y)=\sum_{j}|Q_{j}^{k}|^{-1}\chi _{Q_{j}^{k}}(x)\chi _{Q_{j}^{k}}(y).
\end{equation*}%
It follows immediately from the definition of $A_{t}$ that $|A_{t}f(x)|\leq
Mf(x)$, and so by Lemma~\ref{prop:conv}, $A_{t}$ is bounded on $L^{2}(w)$
for all $w\in A_{2}$ with a constant independent of $t$.

Define the operator $P_tf(x) = p_t*f(x)$, where $p_t(x)=t^{-n}p(x/t)$ and $p$
is a non-negative, radial function such that ${\mathrm{supp}}(p)\subset
B_1(0)$ and $\|p\|_1=1$. Then again by Lemma~\ref{prop:conv}, $|P_tf(x)|\leq
C(n)Mf(x)$ and $P_t$ is uniformly bounded on $L^2(w)$, $w\in A_2$.

\begin{lemma}
\label{lemma:dyadic-conv} Given $w\in A_2$, there exists a constant $C$ such
that for all $f\in L^2(w)$, 
\begin{equation}  \label{eqn:avg1}
\int_0^\infty\int_{{\mathbb{R}^n}} |(P_t-A_t)f(x)|^2 w(x)\,dx\frac{dt}{t}
\leq C\int_{{\mathbb{R}^n}} |f(x)|^2w(x)\,dx.
\end{equation}
\end{lemma}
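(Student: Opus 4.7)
The strategy is to apply Proposition~\ref{prop:schur} to the family $R_t := P_t - A_t$, since the desired inequality \eqref{eqn:avg1} is exactly the conclusion of that proposition. The first hypothesis of Proposition~\ref{prop:schur} --- uniform $L^2(w)$ boundedness of each $R_t$ --- is immediate: both $P_t f$ and $A_t f$ are pointwise dominated by $C(n) Mf$, so Lemma~\ref{prop:conv} gives $\|R_t\|_{\mathcal{B}(L^2(w))}\leq C(n,[w]_{A_2})$ independently of $t$. Everything therefore comes down to verifying the quasi-orthogonality estimate
\begin{equation*}
\|R_t Q_s\|_{\mathcal{B}(L^2(w))} \leq K \min\!\left(\tfrac{t}{s},\tfrac{s}{t}\right)^{\alpha}
\end{equation*}
for some $\alpha>0$, where $Q_s$ is any convolution with a radial Schwartz function $\psi$ with $\widehat\psi(0)=0$ satisfying \eqref{eqn:repro1}.

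My plan is to prove this estimate first on the unweighted space $L^2$ and then transfer it to $L^2(w)$ using Lemma~\ref{lemma:interpol}: if the unweighted bound is $K\min(t/s,s/t)^{\alpha}$, interpolation yields a weighted bound of the form $C(n,[w]_{A_2}) K^{\theta} \min(t/s,s/t)^{\alpha\theta}$ for a $\theta\in(0,1)$ depending only on $[w]_{A_2}$, and $\alpha\theta$ is still large enough to feed into Proposition~\ref{prop:schur}. (Strictly speaking, one applies Lemma~\ref{lemma:interpol} to the normalized operator $K^{-1}\min(t/s,s/t)^{-\alpha} R_t Q_s$, which is uniformly bounded on $L^2(w')$ for every $w'\in A_2$.) For the unweighted estimate, I would exploit the two available cancellations: $Q_s 1 = 0$ (since $\widehat\psi(0)=0$) and $R_t 1 = P_t 1 - A_t 1 = 1-1=0$. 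When $s\leq t$, insert $\int \psi_s(z-y)\,dy = 0$ into the kernel expression of $R_t Q_s$ and use the Hölder continuity of the smooth companion kernel $p_t$ (and the size estimate for $A_t$) to extract a factor $(s/t)^{\alpha}$. When $s\geq t$, use $R_t 1 = 0$ to write $R_t Q_s f(x) = R_t\bigl(Q_s f - (Q_s f)(x)\bigr)(x)$ and exploit the Lipschitz continuity of $Q_s f$ at scale $s$ against the $t$-scale support/decay of $R_t$ to extract $(t/s)^{\alpha}$. These two kernel-level estimates, fed into a Schur/Cotlar-type bound (Lemma~\ref{lemma:reg-family} applied to the appropriately rescaled composition), give the unweighted quasi-orthogonality.

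The main obstacle is the non-smoothness of $A_t$: its kernel $A_t(x,y)=\sum |Q_j^k|^{-1}\chi_{Q_j^k}(x)\chi_{Q_j^k}(y)$ is only piecewise constant, so one cannot directly appeal to a Lipschitz-in-$y$ estimate of the Grafakos type for $A_t$ alone. The way around this, following Auscher--Tchamitchian, is to keep $A_t$ in a passive role --- using only that it is an $L^2(w)$-bounded averaging operator reproducing constants --- and to place all regularity demands on its smooth partners $P_t$ and $Q_s$. In practice this means always working with the composition $R_t Q_s$ as a whole rather than estimating the kernel of $R_t$ in isolation: when $t\le s$ it is the smoothness of $\psi_s$ that supplies regularity, and when $s\le t$ it is the smoothness of $p_t$ combined with the dyadic cancellation $R_t 1 = 0$. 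Once this technical point is handled, Proposition~\ref{prop:schur} delivers \eqref{eqn:avg1}.
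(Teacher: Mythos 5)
Your high-level strategy matches the paper exactly: reduce to Proposition~\ref{prop:schur} via the unweighted quasi-orthogonality bound $\Vert R_tQ_s\Vert_{L^2}\lesssim\min(t/s,s/t)^\alpha$ and then upgrade to $L^2(w)$ by Lemma~\ref{lemma:interpol}. You also correctly flag the central obstacle, namely that $A_t$ has a piecewise-constant kernel with no H\"older regularity. The problem is that the mechanism you then propose for the regime $s\le t$ does not actually resolve it. To extract $(s/t)^\alpha$ from $R_tQ_s$ using $Q_s1=0$, you must pair the cancellation $\int\psi_s=0$ with regularity of the kernel $R_t(x,\cdot)=P_t(x,\cdot)-A_t(x,\cdot)$ in its second slot; the $P_t$ piece cooperates, but the $A_t$ piece is an indicator and does not. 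Invoking ``the size estimate for $A_t$'' gives no decay, and invoking $R_t1=0$ is the cancellation adapted to the \emph{other} regime $s\ge t$ (where it is indeed what you use, correctly, against the Lipschitz regularity of $\psi_s$). So as written, the bound $\Vert A_tQ_s\Vert_{L^2}\lesssim(s/t)^\alpha$ for $s\le t$ — which is genuinely needed and genuinely nontrivial (it rests on the fact that $Q_s^*\chi_Q$ concentrates in an $s$-collar of $\partial Q$, giving $(s/t)^{1/2}$) — is left unproved and unsourced.

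The paper avoids estimating the rough kernel of $R_t$ directly by the further decomposition $R_t=R_t(I-P_t)+R_tP_t$. The piece $R_tP_t=P_t^2-A_tP_t$ has a kernel that \emph{is} Lipschitz in $y$, because the smoothing in the $y$-variable is entirely supplied by $p_t$; together with $R_tP_t1=0$, this is precisely the setting of Lemma~\ref{lemma:reg-family}, and the full $\min(t/s,s/t)^\alpha$ bound follows at once. For the remaining piece $R_t(I-P_t)$, the extra factor $(I-P_t)$ handles $t\le s$ via Plancherel, while for $s\le t$ one commutes convolutions to reduce to $\Vert R_tQ_s\Vert_{L^2}$ and then cites the estimate $\Vert A_tQ_s\Vert_{L^2}\lesssim(s/t)^\alpha$ from Grafakos. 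This decomposition is the concrete realization of your phrase ``keep $A_t$ in a passive role'': composing with $P_t$ on the right literally mollifies the dyadic kernel. If you insert it, and either cite or prove the $\Vert A_tQ_s\Vert$ decay, your argument becomes the paper's.
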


\begin{remark}
\label{remark:dyadic-conv} Though Lemma~\ref{lemma:dyadic-conv} is stated in
terms of the dyadic grid, it will be clear from the proof that it is true if
we replace the dyadic grid by the ``dyadic'' grid relative to a fixed cube $%
Q $: that is, the collection of cubes gotten as in the construction of the
standard dyadic grid, but starting with $Q$ instead of $[0,1)^n$.
\end{remark}

\subsection*{Proof of Lemma~\protect\ref{lemma-sq-Taylor}}

The proof requires several steps. First, we will show that there exists a
family of sublinear operators $\{R_{t}^{k}\}$, $k\geq 0$, such that %
\eqref{eqn:ugly-sqf} holds provided that there exists $A>1$ such that 
\begin{equation}
\int_{0}^{\infty }\int_{\mathbb{R}^{n}}|R_{t}^{k}F(x)|^{2}\,w(x)\,dx\,\frac{%
dt}{t}\leq CA^{k}\Vert F\Vert _{L^{2}(w)}^{2},  \label{eqn:Rsquare}
\end{equation}%
where 
\begin{equation*}
F(x)=R\cdot {\nabla }f(x)=\sum_{j=1}^{n}R_{j}\left( \frac{\partial f}{%
\partial x_{j}}\right) (x)
\end{equation*}%
and the $R_{j}$ are the Riesz transforms. This square function estimate will
follow from Proposition~\ref{prop:schur} if we can prove that the operators $%
R_{t}^{k}$ are uniformly bounded on $L^{2}(w)$ and satisfy two operator norm
estimates. Let $\psi $ be a radial Schwartz function such that $\widehat{%
\psi }(0)=0$ and such that \eqref{eqn:repro1} holds, and let $Q_{s}f=\psi
_{s}\ast f$. We will first show that for all $s,\,t>0$, 
\begin{equation}
\Vert R_{t}^{k}Q_{s}\Vert _{\mathbf{B}(L^{2}(w))}\leq C(n,[w]_{A_{2}},\psi )
\label{eqn:Restimate1}
\end{equation}%
and then show the stronger estimate 
\begin{equation}
\Vert R_{t}^{k}Q_{s}\Vert _{\mathbf{B}(L^{2}(w))}\leq C(n,[w]_{A_{2}},p,\psi
)A^{k}\min \left( \frac{t}{s},\frac{s}{t}\right) ^{\alpha }.
\label{eqn:Restimate2}
\end{equation}

\medskip

\subsection*{Reduction to \eqref{eqn:Rsquare}}

By Lemma \ref{lemma:gaussian-prime}, 
\begin{align*}
& \left| \int_{\mathbb{R}^{n}}V_{t}(x,y)G_{t}(x,y)\,dy\right| \\
& \qquad \leq C_{1}t^{-n-1}\int_{\mathbb{R}^{n}}\exp \left( -C_{2}\frac{%
|x-y|^{2}}{t^{2}}\right) |G_{t}(x,y)|\,dy \\
& \qquad =C_{1}t^{-n-1}\int_{|x-y|<t}\exp \left( -C_{2}\frac{|x-y|^{2}}{t^{2}%
}\right) |G_{t}(x,y)|\,dy \\
& \qquad \qquad +C_{1}t^{-n-1}\sum_{k=1}^{\infty }\int_{2^{k-1}t\leq
|x-y|<2^{k}t}\exp \left( -C_{2}\frac{|x-y|^{2}}{t^{2}}\right)
|G_{t}(x,y)|\,dy.
\end{align*}

In the first integral, make the change of variables $h=(y-x)/t$; in the
integrals in the sum, make the change of variables $h=(y-x)/2^{k}t$. Then
there exist positive constants $B_1$ and $B_2$ such that 
\begin{equation*}
\left|\int_{\mathbb{R}^n} V_t(x,y)G_t(x,y)\,dy\right| \leq
B_1t^{-1}\sum_{k=0}^\infty 2^{nk}\exp(-B_24^k)\int_{|h|<1}
|G_t(x,x+2^{k}ht)|\,dh.
\end{equation*}

Note that for any $B_{2}>0$, 
\begin{equation*}
\sum_{k=0}^{\infty }2^{nk}\exp (-B_{2}4^{k})<\infty .
\end{equation*}%
Therefore, by H\"{o}lder's inequality we get the following estimate: 
\begin{align}
& \int_{0}^{\infty }\int_{\mathbb{R}^{n}}\left| \int_{\mathbb{R}^n}
V_{t}(x,y)G_{t}(x,y)\,dy\right|^{2}\,w(x)\,dx\,\frac{dt}{t}  \notag \\
& \qquad \leq C\int_{0}^{\infty }\int_{\mathbb{R}^{n}}\left(
t^{-1}\sum_{k=0}^{\infty }2^{nk}\exp
(-B_{2}4^{k})\int_{|h|<1}|G_{t}(x,x+2^{k}ht)|\,dh\right) ^{2}\,w(x)\,dx\,%
\frac{dt}{t}  \notag \\
& \qquad \leq C\sum_{k=0}^{\infty }2^{nk}\exp (-B_{2}4^{k})\int_{0}^{\infty
}\int_{\mathbb{R}^{n}}\left(
t^{-1}\int_{|h|<1}|G_{t}(x,x+2^{k}ht)|\,dh\right) ^{2}\,w(x)\,dx\,\frac{dt}{t%
};  \notag \\
\intertext{if we make the change of variables $t\mapsto 2^{-k}t=t_{k}$, we
get}
& \qquad \leq C\sum_{k=0}^{\infty }2^{(n+2)k}\exp (-B_{2}4^{k})  \notag \\
& \qquad \qquad \qquad \times \int_{0}^{\infty }\int_{\mathbb{R}^{n}}\left(
t^{-1}\int_{|h|<1}|G_{t_{k}}(x,x+ht)|\,dh\right) ^{2}\,w(x)\,dx\,\frac{dt}{t}%
.  \label{eqn:Rprev}
\end{align}

Assume for the moment that for each $k$ there exists a family of sublinear
operators $\{R_{t}^{k}\}$ such that 
\begin{equation}
R_{t}^{k}F(x)=t^{-1}\int_{|h|<1}|G_{t_{k}}(x,x+ht)|\,dh,  \label{eqn:Rdefn}
\end{equation}
where $F=R\cdot {{\nabla} }f$, and that there exists $A>1$ such that %
\eqref{eqn:Rsquare} holds. By Lemma~\ref{prop:sio}, $\Vert F\Vert
_{L^{2}(w)}\leq C(n,[w]_{A_{2}})\Vert {{\nabla} }f\Vert _{L^{2}(w)}$, and so
if we combine \eqref{eqn:Rprev} and \eqref{eqn:Rsquare} we get inequality~(%
\ref{eqn:ugly-sqf}).

\medskip

To construct the operators $R_t^k$ and show that \eqref{eqn:Rdefn} holds,
recall that the the Riesz potential $I_1$ is the convolution operator with
kernel $i_1(x)=c_n|x|^{1-n}$, where the constant $c_n$ is chosen so that 
\begin{equation*}
\widehat{I_1f}(\xi) = (2\pi|\xi|)^{-1}\widehat{f}(\xi).
\end{equation*}
The following identities are well-known (see, for instance, Stein~\cite%
{stein70}): if $f$ is a Schwartz function and $F= R\cdot {\nabla} f$, then $%
f = I_1F$. Further, given any $h\in {\mathbb{R}^n}$, $h\cdot{\nabla} f = -
R\cdot(hF)$.

Define the convolution kernel 
\begin{equation*}
J_{t}^{h}(x)=t^{-1}\left( i_{1}(x+ht)-i_{1}(x)\right) ;
\end{equation*}%
then we have that 
\begin{align*}
t^{-1}G_{t_{k}}\left( x,x+ht\right) & = \frac{f\left( x+ht\right) -f\left(
x\right) }{t}-p_{t_{k}}\ast (h\cdot {{\nabla} }f)\left( x\right) \\
& =\left( J_{t}^{h}\ast F\right) \left( x\right) +p_{t_{k}}\ast (R\cdot
(hF))(x)=H_{t,h}^{k}F(x).
\end{align*}
Therefore, if we define 
\begin{equation*}
R_{t}^{k}F(x)=\int_{|h|<1}|H_{t,h}^{k}F(x)|\,dh,
\end{equation*}
then \eqref{eqn:Rdefn} holds. And, since the operators $H_{t,h}^{k}$ are
linear, each $R_{t}^{k}$ is sublinear.

\subsection*{Proof of inequality \eqref{eqn:Restimate1}}

Since by Lemma~\ref{prop:conv} the operators $Q_s$ are uniformly bounded on $%
L^2(w)$, to prove \eqref{eqn:Restimate1} it will suffice to prove that for
all $t$ and $k$, 
\begin{equation}  \label{eqn:Restimate3}
\|R_t^k\|_{\mathbf{B}(L^2(w))} \leq C(n,[w]_{A_2},\psi).
\end{equation}
By definition, for all Schwartz functions $g$ we have that 
\begin{equation*}
|R_t^kg(x)| \leq \int_{|h|<1} |(J_t^h*g)(x)|\,dh + \int_{|h|<1}
|p_{t_k}*(R\cdot(hg))(x)|\,dh.
\end{equation*}
To prove \eqref{eqn:Restimate3} we will prove that each term on the
right-hand side is uniformly bounded on $L^2(w)$. The boundedness of the
second follows immediately from Lemmas~\ref{prop:conv} and \ref{prop:sio}:
since $\|p\|_1=1$, for all $k$ and $t$ we have that

\begin{align*}
& \int_{\mathbb{R}^{n}}\left( \int_{|h|<1}|p_{t_{k}}\ast (R\cdot
(hg))(x)|\,dh \right)^{2}w(x)\,dx \\
& \qquad \qquad \leq C(n)\int_{|h|<1}\int_{\mathbb{R}^{n}}|p_{t_{k}}\ast
(R\cdot (hg))(x)|^{2}w(x)\,dx\,dh \\
& \qquad \qquad \leq C(n,[w]_{A_{2}})\int_{|h|<1}\int_{\mathbb{R}%
^{n}}|R\cdot (hg)(x)|^{2}w(x)\,dx\,dh \\
& \qquad \qquad \leq C(n,[w]_{A_{2}})\int_{|h|<1}\int_{\mathbb{R}%
^{n}}|hg(x)|^{2}w(x)\,dx\,dh \\
& \qquad \qquad \leq C(n,[w]_{A_{2}})\int_{\mathbb{R}^{n}}|g(x)|^{2}w(x)\,dx.
\end{align*}

We will now prove that the first term is bounded on $L^2(w)$: 
\begin{equation}  \label{eqn:firstterm}
\int_{\mathbb{R}^n} \left(\int_{|h|<1} |J_t^h*g(x)|\,dh\right)^2 w(x)\,dx
\leq C(n,[w]_{A_2})\|g\|_{L^2(w)}^2.
\end{equation}

To prove this we first estimate the inner integral on the left-hand side: 
\begin{align*}
\int_{|h|<1}|J_{t}^{h}\ast g(x)|\,dh & \leq \int_{|h|<1} \bigg| \left(
J_{t}^{h}+c_{n}(n-1)\frac{h\cdot x}{|x|^{n+1}}\chi _{\{|x/t|>2\}}\right)
\ast g(x)\bigg| \,dh \\
& \qquad + \int_{|h|<1}\bigg| c_{n}(n-1)\frac{h\cdot x}{|x|^{n+1}}\chi
_{\{|x/t|>2\}}\ast g(x) \bigg| \,dh \\
& \leq \bigg( \int_{|h|<1}\bigg| J_{t}^{h}+c_{n}(n-1)\frac{h\cdot x}{%
|x|^{n+1}}\chi _{\{|x/t|>2\}}\bigg| \,dh\bigg) \ast |g|(x) \\
& \qquad +C_{n}\sum_{i=1}^{n}\bigg| \frac{x_{i}}{|x|^{n+1}}\chi
_{\{|x/t|>2\}}\ast g(x)\bigg| \\
& \leq (L_{t}\ast |g|)(x)+C(n)\sum_{i=1}^{n}R_{i}^{\ast }g(x),
\end{align*}
where 
\begin{equation*}
L(x)=\int_{|h|<1}\bigg| J_{1}^{h}(x)+c_{n}(n-1)\frac{h\cdot x}{|x|^{n+1}}%
\chi _{|x|>2}(x)\bigg| \,dh,
\end{equation*}
$c_{n}$ is the constant in the Riesz potential and $R_{i}^{\ast }$ is the
maximal singular integral associated with the Riesz transform $R_{i}$.

By Lemma~\ref{prop:sio}, 
\begin{equation*}
\int_{\mathbb{R}^{n}}| \sum_{i=1}^{n}R_{i}^{\ast }g(x)| ^{2}w(x)\,dx\leq
C(n,[w]_{A_{2}})\Vert g\Vert _{L^{2}(w)}^{2}.
\end{equation*}
Therefore, to complete the proof of \eqref{eqn:firstterm} we need to show
that 
\begin{equation*}
\int_{\mathbb{R}^{n}}|(L_{t}\ast | g| )(x)|^{2}w(x)\,dx\leq
C(n,[w]_{A_{2}})\Vert g\Vert _{L^{2}(w)}^{2}.
\end{equation*}
But by Lemma~\ref{prop:conv} it will suffice to prove that 
\begin{equation}
|L(x)|\leq C(n)\min \left( \frac{1}{|x|^{n-1}},\frac{1}{|x|^{n+1}}\right) ,
\label{eqn:rad-bdd}
\end{equation}
since the right-hand side is a radial, decreasing function in $L^{1}$. To
prove this estimate we treat several cases depending on the size of $x$.

\textsc{Case 1: $|x|>2$}. In this case, since $h\cdot {{\nabla} }%
i_{1}(x)=c_{n}(n-1)h\cdot x/|x|^{n+1}$, if we apply the mean value theorem
twice we get that 
\begin{align*}
|L(x)|& =c_{n}\int_{|h|<1}| | x+h| ^{1-n}-| x| ^{1-n}+(n-1)\frac{h\cdot x}{%
|x|^{n+1}}| \,dh \\
& \leq C(n)\int_{|h|<1}\frac{1}{|x|^{n+1}}\,dh\leq \frac{C(n)}{|x|^{n+1}}.
\end{align*}

\textsc{Case 2: $|x|\leq 2$}. In this case we have that 
\begin{equation}
|L(x)|\leq \sum_{k=0}^{\infty }\int_{\{2^{-k-1}\leq
|h|<2^{-k}\}}|i_{1}(x+h)-i_{i}(x)|\,dh.  \label{eqn:case2}
\end{equation}%
Fix $k$. Then we consider the following sub-cases.

\textsc{Sub-case 2.1: $|x|>2^{-k+1}$.} In this case, $|x|/2>|h|$. Hence, $%
|x+h|>|x|/2$, so 
\begin{equation}  \label{eqn:kbound}
\int_{\{2^{-k-1}\leq |h|<2^{-k}\}} |i_1(x+h)-i_i(x)|\,dh \leq C(n)\frac{%
2^{-kn}}{|x|^{n-1}}.
\end{equation}

\textsc{Sub-case 2.2: $|x|<2^{-k-2}$}. In this case, we have that $%
|x+h|>|h|/2>2^{-k-2}$, so we can argue as in the previous case to get %
\eqref{eqn:kbound}.

\textsc{Sub-case 2.3: $2^{-k-2}\leq |x|\leq 2^{-k+1}$}. In this case we
estimate as follows :%
\begin{align*}
\int_{\{2^{-k-1}\leq |h|<2^{-k}\}}|i_{1}(x+h)-i_{i}(x)|\,dh & =
c_{n}\int_{\{2^{-k-1}\leq |h|<2^{-k}\}}\big|| x+h| ^{1-n}-| x| ^{1-n}\big|%
\,dh \\
& = \frac{c_{n}}{| x| ^{n-1}}\int_{\{2^{-k-1}\leq |h|<2^{-k}\}}\big|| \frac{x%
}{| x| }+\frac{h}{| x| }| ^{1-n}-1\big|\,dh \\
\intertext{if we make the change of variables $u=h/|x|$, we get}
& = \frac{c_{n}| x| ^{n}}{| x| ^{n-1}}\int_{\{\frac{2^{-k-1}}{| x| }\leq |u|<%
\frac{2^{-k}}{| x| }\}}\big|| \frac{x}{| x| }+u| ^{1-n}-1\big|\,du \\
& \leq \frac{C_{n}2^{-kn}}{|x|^{n-1}}\int_{\{\frac{1}{4}\leq |u|<4\}}\big|| 
\frac{x}{| x| }+u| ^{1-n}-1\big|\,du \\
& = \frac{C_{n}2^{-kn}}{|x|^{n-1}}\int_{\{\frac{1}{4}\leq |u|<4\}}\big||
1+u| ^{1-n}-1\big|\,du,
\end{align*}
where the last equality holds by rotational symmetry. Note that this last
integral is finite and its value depends on $n$ but is independent of $k$
and $x$.

If we apply these three sub-cases to \eqref{eqn:case2}, we get that for all $%
|x|\leq 2$, 
\begin{equation*}
|L(x)| \leq \sum_{k=0}^\infty \frac{C(n)2^{-kn}}{|x|^{n-1}} \leq \frac{C(n)}{%
|x|^{n-1}}.
\end{equation*}

This completes our proof of (\ref{eqn:rad-bdd}), and so of %
\eqref{eqn:firstterm}. Therefore, we have shown that inequality %
\eqref{eqn:Restimate3} holds.

\subsection*{Proof of Inequality \eqref{eqn:Restimate2}}

By Lemma~\ref{lemma:interpol} and inequality \eqref{eqn:Restimate1}, to
prove \eqref{eqn:Restimate2} it will suffice to show the corresponding
unweighted norm estimate: 
\begin{equation}  \label{eqn:Restimate4}
\|R_t^kQ_s\|_{\mathbf{B}(L^2)} \leq C(n,[w]_{A_2},p,\psi)A^k\min\left(\frac{t%
}{s},\frac{s}{t}\right).
\end{equation}

Fix $s,\,t>0$, $k\geq 0$, and recall that $Q_sf= \psi_s*f$. By H\"older's
inequality and Plancherel's theorem, 
\begin{align*}
& \int_{\mathbb{R}^n} |R_t^kQ_sg(x)|^2\,dx \\
& \qquad \leq C(n) \int_{|h|<1}\int_{\mathbb{R}^n}
|J_t^h*\psi_s*g(x)+p_{t_k}*(R\cdot(h(\psi_s*g))(x)|^2\,dx\,dh \\
& \qquad = C(n) \int_{|h|<1}\int_{\mathbb{R}^n} \left|\widehat{J_t^h}(\xi)%
\widehat{\psi}(s\xi)\widehat{g}(\xi) -i \,\widehat{p}(t_k\xi) \frac{\xi\cdot
h}{|\xi|}\widehat{\psi}(s\xi)\widehat{g}(\xi) \right|^2\,d\xi\,dh \\
& \qquad = C(n) \int_{|h|<1}\int_{\mathbb{R}^n} \left|\frac{e^{2\pi i
th\cdot\xi}-1- 2\pi i \widehat{p}(t_k\xi)\xi\cdot h}{2\pi t|\xi|}\right|^2 |%
\widehat{\psi}(s\xi)|^2|\widehat{g}(\xi)|^2\,d\xi\,dh
\end{align*}

To estimate the last term, we will use the following: since $\psi $ is a
Schwartz function such that $\widehat{\psi }(0)=0$, 
\begin{equation*}
|\widehat{\psi }(\xi )|\leq C(\psi )\min (|s\xi |,|s\xi |^{-1}).
\end{equation*}%
Since $p\in C_{c}^{\infty }$, 
\begin{equation*}
|\widehat{p}(t_{k}\xi )|\leq \frac{C(p)}{t_{k}|\xi |}=\frac{C(p)2^{k}}{t|\xi
|}.
\end{equation*}%
And, since $\widehat{p}(0)=1$, 
\begin{equation*}
e^{2\pi ith\cdot \xi }-1-2\pi i\,\widehat{p}(t_{k}\xi )\xi \cdot h=O(|t\xi
|^{2}).
\end{equation*}%
Therefore, we have that 
\begin{equation*}
\left\vert \frac{e^{2\pi ith\cdot \xi }-1-2\pi i\widehat{p}(t_{k}\xi )\xi
\cdot h}{2\pi t|\xi |}\right\vert \leq C(p)2^{k}\min (|t\xi |,|t\xi |^{-1}).
\end{equation*}%
Combining these estimates we get that 
\begin{align*}
& \int_{\mathbb{R}^{n}}|R_{t}^{k}Q_{s}g(x)|^{2}\,dx \\
& \qquad \leq C(n,p,\psi )4^{k}\int_{|h|<1}\int_{\mathbb{R}^{n}}\min (|t\xi
|,|t\xi |^{-1})^{2}\min (|s\xi |,|s\xi |^{-1})^{2}|\widehat{g}(\xi
)|^{2}\,d\xi \,dh \\
& \qquad \leq C(n,p,\psi )4^{k}\min \left( \frac{s}{t},\frac{t}{s}\right)
^{2}\int_{|h|<1}\int_{\mathbb{R}^{n}}|\widehat{g}(\xi )|^{2}\,d\xi \,dh \\
& \qquad \leq C(n,p,\psi )4^{k}\min \left( \frac{s}{t},\frac{t}{s}\right)
^{2}\int_{\mathbb{R}^{n}}|g(x)|^{2}\,dx.
\end{align*}%
Inequality \eqref{eqn:Restimate4} follows immediately. This finishes the
proof of Lemma \ref{lemma-sq-Taylor}.

\subsection*{Proof of Lemma~\protect\ref{lemma:dyadic-conv}}

For brevity, let $R_t=P_t-A_t$. Then we have that 
\begin{equation*}
R_tf = R_t(I-P_t)f + R_tP_tf,
\end{equation*}
so it will suffice to prove that \eqref{eqn:avg1} holds with $R_t$ replaced
by $R_t(I-P_t)$ and $R_tP_t$. By our remarks above, it is clear that both of
these operators are uniformly bounded on $L^2(w)$. Therefore, by Lemma~\ref%
{lemma:interpol} and Proposition~\ref{prop:schur} it will suffice to show
that for all $s,\,t>0$, there exist constants $C,\,\alpha >0$ such that 
\begin{gather}
\|R_t(I-P_t)Q_s\|_{L^2} \leq C\min\left(\frac{t}{s},\frac{s}{t}%
\right)^\alpha,  \label{eqn:bnd1} \\
\|R_tP_tQ_s\|_{L^2} \leq C\min\left(\frac{t}{s},\frac{s}{t}\right)^\alpha,
\label{eqn:bnd2}
\end{gather}
where $Q_sf= \psi_s*f(x)$ with $\psi$ a radial Schwartz function with $%
\widehat{\psi}(0) = 0$.

We first prove \eqref{eqn:bnd1}. Since $R_t$ is uniformly bounded on $L^2$, 
\begin{equation*}
\|R_t(I-P_t)Q_s\|_{L^2} \leq C\|(I-P_t)Q_s\|_{L^2}.
\end{equation*}
We can bound the right-hand side using Plancherel's theorem. By our choice
of $\psi$ and since $\hat{p}(0)=1$, 
\begin{equation*}
|\hat{\psi}(s\xi)| \leq \frac{C(\psi)}{|s\xi|}, \qquad |1-\hat{p}(t\xi)|
\leq C|t\xi|.
\end{equation*}
Therefore, 
\begin{multline*}
\int_{{\mathbb{R}^n}} |(I-P_t)Q_sf(x)|^2\,dx = \int_{{\mathbb{R}^n}} |(1-%
\hat{p}(t\xi)|^2|\hat{\psi}(s\xi)^2||\hat{f}(\xi)|^2\,d\xi \\
\leq C \left(\frac{t}{s}\right)^2\int_{{\mathbb{R}^n}} |\hat{f}%
(\xi)|^2\,d\xi = C \left(\frac{t}{s}\right)^2\|f\|_2^2,
\end{multline*}
and so 
\begin{equation*}
\|R_t(I-P_t)Q_s\|_{L^2} \leq C\left(\frac{t}{s}\right).
\end{equation*}

On the other hand, since convolution operators commute and $I-P_t$ is
uniformly bounded on $L^2$, 
\begin{equation*}
\|R_t(I-P_t)Q_s\|_{L^2} = \|R_tQ_s(I-P_t)\|_{L^2} \leq C\|R_tQ_s\|_{L^2}.
\end{equation*}
For any $\alpha$, $0<\alpha<1/2$, there exists a constant $C$ such that 
\begin{equation*}
\|A_tQ_s \|_{L^2} \leq C\left(\frac{s}{t}\right)^\alpha.
\end{equation*}
(See Grafakos~\cite{grafakos04}.) Further, since 
\begin{equation*}
|\hat{p}(t\xi)| \leq \frac{C}{|t\xi|}, \qquad |\hat{\psi}(s\xi)| \leq
C|s\xi|,
\end{equation*}
we can again use Plancherel's theorem to see that 
\begin{equation*}
\|P_tQ_s\|_{L^2} \leq C\left(\frac{s}{t}\right).
\end{equation*}
It follows that for $\alpha<1/2$ 
\begin{equation*}
\|R_tQ_s\|_{L^2} \leq C\left(\frac{s}{t}\right)^{\alpha},
\end{equation*}
and so we get that \eqref{eqn:bnd1} holds.

\medskip

To prove \eqref{eqn:bnd2} we will apply Lemma~\ref{lemma:reg-family}. It
will suffice to show that $R_tP_t(1)=0$, and that the kernel $K_t$ of the
operator $R_tP_t$ satisfies \eqref{eqn:grafakos1} and \eqref{eqn:grafakos2}.
The identity is immediate: both $A_t$ and $P_t$ are bounded on $L^\infty$
and $A_t(1)=P_t(1)=1$.

Let $K_t = J_t + L_t$, where $J_t$ is the kernel of $P_t^2$ and $L_t$ is the
kernel of $A_tP_t$, that is, 
\begin{gather*}
J_t(x,y) = \int_{{\mathbb{R}^n}} p_t(x-z)p_t(y-z)\,dz, \\
L_t(x,y) = \int_{\mathbb{R}^n} \sum_j
|Q_j^k|\chi_{Q_j^k}(x)\chi_{Q_j^k}(z)p_t(y-z)\,dz.
\end{gather*}
It is immediate from these expressions that there exists a constant $c>0$
such that both $J_t(x,y)$ and $L_t(x,y)$ are non-zero only if $|x-y|<ct$.
Further, we have that 
\begin{equation*}
|J_t(x,y)| \leq t^{-n}\|p\|_{L^\infty}\|p\|_{L^1} \leq Ct^{-n}.
\end{equation*}
Similarly, since $|Q_j^k|\approx t^{-n}$, 
\begin{equation*}
|L_t(x,y)| \leq Ct^{-n}\|p\|_{L^\infty}.
\end{equation*}
Inequality \eqref{eqn:grafakos1} follows at once.

The proof of \eqref{eqn:grafakos2} is similar. By the mean value theorem, 
\begin{equation*}
|p_{t}(y-z)-p_{t}(y^{\prime}-z)|\leq t^{-n}\Vert {\nabla }p\Vert _{L^{\infty
}}\left\vert \frac{y-y^{\prime }}{t}\right\vert =\frac{C|y-y^{\prime }|}{%
t^{n+1}}.
\end{equation*}%
If we use this to estimate $|J_{t}(x,y)-J_{t}(x,y^{\prime })|$ and $%
|L_{t}(x,y)-L_{t}(x,y^{\prime })|$ and argue as before, we get that both
satisfy a similar bound. Inequality \eqref{eqn:grafakos2} follows at once.
Therefore, we have proved \eqref{eqn:bnd2} and our proof is complete.

\section{Reduction to a Square Function Estimate\label%
{section:square-functions}}

In this section we begin our proof of Theorem~\ref{theorem:WKato} by proving
that inequality~(\ref{eqn:kato-half}) holds if we have the square function
estimate 
\begin{equation}  \label{eqn:firstreduce}
\left( \int_{0}^{\infty }\int_{\mathbb{R}^{n}}| V_{t}f(x)| ^{2}\,w(x)\,dx\,%
\frac{dt}{t}\right) ^{1/2}\leq C\Vert {{\nabla} }f\Vert _{L^{2}(w)},
\end{equation}
where $V_{t}=-2t{\mathcal{L}}_{w}e^{-t^{2}{\mathcal{L}}_{w}}$. Fix $f\in D({%
\mathcal{L}}_{w})$; recall that $D({\mathcal{L}}_{w})$ is dense in $%
H_{0}^{1}(w)$ (see \cite{cruz-uribe-riosP}). If we apply integration by
parts to a well known formula (see, for instance, Kato \cite{kato66}) we get
that 
\begin{equation}
{\mathcal{L}}_{w}^{1/2}f=\int_{0}^{\infty }t^{3}e^{-2t^{2}{\mathcal{L}}_{w}}{%
\mathcal{L}}_{w}^{2}f\,\frac{dt}{t}.  \label{eqn:sqr-func-calc}
\end{equation}
Therefore, for all $g\in L^{2}(w)$, 
\begin{align*}
| \langle {\mathcal{L}}_{w}^{1/2}f,g\rangle _{w}| ^{2}& = \left|
\int_{0}^{\infty }\left\langle t^{3}e^{-2t^{2}{\mathcal{L}}_{w}}{\mathcal{L}}%
_{w}^{2}f,g\right\rangle _{w} \frac{dt}{t}\right| ^{2} \\
& = \left| \int_{0}^{\infty }\left\langle te^{-t^{2}{\mathcal{L}}_{w}}{%
\mathcal{L}}_{w}f,t^{2}{\mathcal{L}}_{w}^{\ast }e^{-t^{2}{\mathcal{L}}%
_{w}^{\ast }}g\right\rangle _{w}\frac{dt}{t}\right| ^{2} \\
& \leq \left( \int_{0}^{\infty }\int_{\mathbb{R}^{n}}|te^{-t^{2}{\mathcal{L}}%
_{w}}{\mathcal{L}}_{w}f(x)|^{2}\,w(x)\,\frac{dt}{t}\right) \\
& \qquad \times \left( \int_{0}^{\infty }\int_{\mathbb{R}^{n}}|t^{2}{%
\mathcal{L}}_{w}^{\ast }e^{-t^{2}{\mathcal{L}}_{w}^{\ast }}g(x)|^{2}\,w(x)\,%
\frac{dt}{t}\right) .
\end{align*}

Hence, by duality we have shown that \eqref{eqn:kato-half} follows from~(\ref%
{eqn:firstreduce}) provided that we can prove the square function inequality 
\begin{equation*}
\int_{0}^{\infty }\int_{\mathbb{R}^n}| t^{2}{\mathcal{L}}_w^* e^{-t^{2}{%
\mathcal{L}}_w^{*}}g(x)|^{2}\,w(x)\,\frac{dt}{t} \leq C\|g\|^2_{L^2(w)}.
\end{equation*}

Since the semigroups $e^{-t{\mathcal{L}}_{w}^{\ast }}$ and $e^{-t{\mathcal{L}%
}_{w}}$ satisfy the same estimates, this is equivalent to proving that 
\begin{equation}
\int_{0}^{\infty }\int_{\mathbb{R}^{n}}|tV_{t}g(x)|^{2}\,w(x)\,\frac{dt}{t}%
\leq C\Vert g\Vert _{L^{2}(w)}^2.  \label{eqn:reduce2}
\end{equation}

To prove this square function estimate we use Proposition~\ref{prop:schur}.
Let $G(x)=\exp (-C_{2}|x|^{2})$. Then by Lemma \ref{lemma:gaussian-prime}, 
\begin{eqnarray*}
|tV_{t}f(x)| &\leq &\int_{\mathbb{R}^{n}}|tV_{t}(x,y)||f(y)|\,dy \\
&\leq &C_{1}\int_{\mathbb{R}^{n}}t^{-n}\exp \left( -C_{2}\frac{|x-y|^{2}}{%
t^{2}}\right) |f(y)|\,dy=C_{1}(G_{t}\ast |f|)(x),
\end{eqnarray*}%
Since $G\in L^{1}$ and is radial, by Lemma~\ref{prop:conv}, $%
\sup_{t>0}(G_{t}\ast |f|)(x)\leq CMf(x)$ and the operators $tV_{t}$ are
uniformly bounded on $L^{2}(w)$. Let $\psi $ be a radial Schwartz function
such that $\widehat{\psi }(0)=0$. For $s>0$, let $Q_{s}=\psi _{s}\ast f$.
Again by Lemma~\ref{prop:conv} we have that the operators $Q_{s}$ are
uniformly bounded on $L^{2}(w)$. Therefore, there exists a constant $C$ such
that for all $s,\,t>0$, 
\begin{equation*}
\Vert tV_{t}Q_{s}\Vert _{{\mathcal{B}}(L^{2}(w))}\leq C.
\end{equation*}

Further, by Lemmas~\ref{lemma:gaussian-prime} and~\ref{lemma:reg-family}
there exists $\beta >0$ such that 
\begin{equation*}
\Vert tV_{t}Q_{s}\Vert _{{\mathcal{B}}(L^{2})}\leq C\min \left( \frac{t}{s},%
\frac{s}{t}\right) ^{\beta }.
\end{equation*}%
Hence, by Lemma~\ref{lemma:interpol} we have that for some $\alpha >0$, 
\begin{equation*}
\Vert tV_{t}Q_{s}\Vert _{{\mathcal{B}}(L^{2}(w))}\leq C\min \left( \frac{t}{s%
},\frac{s}{t}\right) ^{\alpha }.
\end{equation*}%
Therefore, the operators $\{tV_{t}\}$ satisfy the hypotheses of Lemma~\ref%
{prop:schur}, so \eqref{eqn:reduce2} holds.

\section{Reduction to a Carleson Measure Estimate\label{section:Carleson}}

In this section we prove that \eqref{eqn:firstreduce} holds provided that we
have a Carleson measure estimate. More precisely, we will show that if $w\in
A_2$, then for all Schwartz functions~$f$, 
\begin{equation}  \label{eqn:goal}
\int_0^\infty \int_{\mathbb{R}^n} |V_tf(x)|^2 \,w(x)\,dx\,\frac{dt}{t} \leq
C(1+\|\gamma_t\|_{C,w})\|{\nabla} f\|_{L^2(w)}^2,
\end{equation}
where $\gamma_t=V_t\phi$, with $\phi(x)=x$.

To prove this, we first show the role played by the Carleson measure
estimate. Let $p\in C_{c}^{\infty }$ be a non-negative, radial, decreasing
function such that $\Vert p\Vert _{1}=1$ and $\mathrm{{\mathrm{supp}}}%
(p)\subset B_{1}(0)$. By Lemma~\ref{lemma:gaussian-prime}, $V_{t}1=0$, so 
\begin{align}
V_{t}f(x)& =\int_{\mathbb{R}^{n}}V_{t}(x,y)f(y)\,dy  \notag \\
& =\left( \int_{\mathbb{R}^{n}}V_{t}(x,y)y\,dy\right) \cdot (p_{t}\ast {%
\nabla }f)(x)  \notag \\
& \qquad +\int_{\mathbb{R}^{n}}V_{t}(x,y)\left[ f(y)-f(x)-(y-x)\cdot
(p_{t}\ast {\nabla }f)(x)\right] \,dy  \notag \\
& =\gamma _{t}(x)\cdot (p_{t}\ast {\nabla }f)(x)+\int_{\mathbb{R}%
^{n}}V_{t}(x,y)G_{t}(x,y)\,dy,  \label{eqn:pieces}
\end{align}%
where $G_{t}(x,y)=f(y)-f(x)-(y-x)\cdot (p_{t}\ast {\nabla }f)(x)$. The first
term in \eqref{eqn:pieces} satisfies a square function estimate. This
follows from the Carleson measure estimate: if $\Vert \gamma _{t}\Vert
_{C,w}<\infty $, then by Lemma \ref{lemma:wtd-carleson}, 
\begin{equation*}
\int_{0}^{\infty }\int_{\mathbb{R}^{n}}|(p_{t}\ast {\nabla }%
f)(x)|^{2}|\gamma _{t}(x)|^{2}\,w(x)\,dx\,\frac{dt}{t}\leq C\Vert \gamma
_{t}\Vert _{C,w}\Vert {\nabla }f\Vert _{L^{2}(w)}^{2}.
\end{equation*}%
The proof of~(\ref{eqn:goal}) now follows from Lemma \ref{lemma-sq-Taylor}.

\section{The Weighted $Tb$ Theorem for Square Roots \label{section:Tb}}

We have reduced the proof of Theorem~\ref{theorem:WKato} to proving that $%
\gamma _{t}\left( x\right) =V_{t}\phi \left( x\right) $ is a Carleson
measure with respect to the weight $w\left( x\right) $: that is, 
\begin{equation}
\| \gamma _{t}\| _{C,w}=\sup_{Q}\frac{1}{w\left( Q\right) }%
\int_{Q}\int_{0}^{\ell \left( Q\right) }| \gamma _{t}\left( x\right) | ^{2}%
\frac{dt}{t}w\left( x\right) dx<\infty,  \label{eqn:Carleson}
\end{equation}
where $\gamma _{t}\left( x\right) =t{\mathcal{L}}_we^{-t^{2}{\mathcal{L}}_w}{%
\varphi }\left( x\right) $ and ${\varphi }\left( x\right) =x$.

In order to prove this fact we will establish a $Tb$ theorem for square
roots, a weighted version of a result due to Auscher and Tchamitchian~\cite%
{auscher-tchamitchian98}. For technical reasons we actually need a slightly
different theorem that is given in Lemma~\ref{lemma-TbRoot-mod} below.
However, it seemed clearer to start with this simpler version and then
sketch the modifications needed to prove the full result.

\begin{lemma}
\label{lemma-TbRoot}Suppose that for every cube $Q$ there exists a mapping $%
F_{Q}:5Q\rightarrow \mathbb{C}^{n}$ and a constant $C=C\left( n,\lambda
,\Lambda ,w\right) <\infty $ such that%
\renewcommand{\theenumi}{\roman{enumi}}%

\begin{enumerate}
\item \renewcommand{\itemsep}{6pt}

\item \label{1:16i}$\displaystyle \int_{5Q}\left\vert \nabla F_{Q}\left(
x\right) \right\vert ^{2}w\left( x\right) dx\leq C w\left( Q\right) $,

\item \label{1:16ii}$\displaystyle \int_{10Q}\left\vert \mathcal{L}%
_{w}F_{Q}\left( x\right) \right\vert ^{2}w\left( x\right) dx\leq Cw\left(
Q\right) \ell \left( Q\right) ^{-2}$,

\item \label{1:16iii} 
\begin{equation*}
\left\Vert \gamma _{t}\right\Vert _{C,w}^{2} \leq C\left\{ \sup_{Q}\frac{1}{%
w\left( Q\right) }\int_{Q}\int_{0}^{\ell \left( Q\right) }\left\vert \gamma
_{t}\left( x\right) \nabla P_{t}F_{Q}\left( x\right) \right\vert ^{2}\frac{dt%
}{t}w\left( x\right) dx+\sup_{t>0}\left\Vert \gamma _{t}\right\Vert _{\infty
}^{2}\right\},
\end{equation*}
\end{enumerate}

where $F_{Q}=\left( F_{1},F_{2},\dots ,F_{n}\right) \in \mathbb{C}^{n}$, and 
$\nabla F_{Q}$ is the matrix $\left( \frac{\partial }{x_{i}}F_{j}\right)
_{i,j=1}^{n}$. Then $\left\Vert \gamma _{t}\right\Vert _{C,w}\leq C\left(
n,\lambda ,\Lambda ,w\right) <\infty $, i.e., $\gamma _{t}$ is a Carleson
measure with respect to $w$.
\end{lemma}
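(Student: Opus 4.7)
\textbf{Proof proposal for Lemma~\ref{lemma-TbRoot}.} The plan is to use hypothesis~(iii) as the basic reduction and bound each of the two terms on its right-hand side separately. Estimating the Carleson-type integral in terms of $\|\gamma_t\|_{C,w}$ itself produces a self-improving inequality of the form $\|\gamma_t\|_{C,w}^2 \lesssim \|\gamma_t\|_{C,w}+1$, from which the conclusion follows by absorption.

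First, we establish the uniform $L^\infty$ bound. By Lemma~\ref{lemma:gaussian-prime}, $V_t 1=0$, so writing $\phi(y)=y$ and applying $V_t 1(x)=0$ componentwise,
\begin{equation*}
\gamma_t(x) = V_t\phi(x) = \int_{\mathbb{R}^n} V_t(x,y)(y-x)\,dy.
\end{equation*}
The Gaussian bound~\eqref{ineq:gauss-p} together with the substitution $u=(y-x)/t$ yields $|\gamma_t(x)| \leq C\int_{\mathbb{R}^n} e^{-C_2|u|^2}|u|\,du$, independent of $x$ and $t$, so $\sup_{t>0}\|\gamma_t\|_\infty \leq C$.

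Next, we estimate the Carleson-type integral. The key observation is that convolution commutes with differentiation, so componentwise $\nabla P_t F_Q = P_t(\nabla F_Q)$. Since $\mathrm{supp}(p_t)\subset B_t(0)$, for $x\in Q$ and $0<t<\ell(Q)$ the values of $\nabla F_Q$ used in $P_t(\nabla F_Q)(x)$ lie in $5Q$, so $P_t(\nabla F_Q)(x) = P_t(\chi_{5Q}\nabla F_Q)(x)$ with $\nabla F_Q$ extended by zero outside $5Q$. Apply the weighted Carleson embedding (Lemma~\ref{lemma:wtd-carleson}) to the localized Carleson datum $\tilde\gamma_t(x) = \gamma_t(x)\chi_Q(x)\chi_{\{t<\ell(Q)\}}$, whose Carleson norm is bounded by $\|\gamma_t\|_{C,w}$, with $f$ taken componentwise to be a component of $\chi_{5Q}\nabla F_Q$; by hypothesis~(i),
\begin{equation*}
\int_Q\int_0^{\ell(Q)} |\gamma_t(x)\nabla P_t F_Q(x)|^2\,\tfrac{dt}{t}\,w(x)\,dx \leq C\|\gamma_t\|_{C,w}\int_{5Q}|\nabla F_Q|^2 w \leq C\|\gamma_t\|_{C,w}w(Q).
\end{equation*}
Dividing by $w(Q)$, taking the supremum over $Q$, and feeding the result and the $L^\infty$ bound into~(iii) yields $\|\gamma_t\|_{C,w}^2 \leq C_1\|\gamma_t\|_{C,w}+C_2$, whence $\|\gamma_t\|_{C,w}\leq C$ by the quadratic formula.

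The chief obstacle is justifying the a priori finiteness of $\|\gamma_t\|_{C,w}$ needed both to invoke the Carleson embedding and to absorb. I would handle this by introducing truncated Carleson norms in which the outer supremum runs only over cubes with $\ell(Q)\leq R$ and the inner integral over $t\in(\epsilon,\min(\ell(Q),R))$; these are manifestly finite thanks to the pointwise bound on $\gamma_t$, the estimates above go through with constants independent of $\epsilon$ and $R$, and one lets $\epsilon\to 0$, $R\to\infty$ at the end. Hypothesis~(ii), controlling $\|\mathcal{L}_w F_Q\|_{L^2(w)}$, is not explicitly used in the argument above; I expect it to enter in the next section when condition~(iii) must be verified for the concrete test functions constructed in Section~\ref{section:test-func}, by way of the identity $V_t F_Q = -2t\,e^{-t^2\mathcal{L}_w}(\mathcal{L}_w F_Q)$ combined with Gaussian $L^2$ bounds on the semigroup.
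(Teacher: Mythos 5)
Your overall plan — bound the Carleson integral on the right of~(iii), feed back into~(iii), and solve for $\|\gamma_t\|_{C,w}$ — is a genuinely different route from the paper's, but as written it has a real gap: it is circular, and the absorption step needs an a priori finiteness that you do not have. With only the uniform bound $\sup_t\|\gamma_t\|_\infty\leq C$, the quantity $\|\gamma_t\|_{C,w}$ is not a priori finite (the $\int_0^{\ell(Q)}\frac{dt}{t}$ diverges), so the inequality $\|\gamma_t\|_{C,w}^2\leq C_1\|\gamma_t\|_{C,w}+C_2$ you derive is vacuous unless you already know the left side is finite. The truncation you propose does not repair this: hypothesis~(iii) is stated for the full Carleson norm, and to run your argument at level $(\epsilon,R)$ you would need a truncated analogue of~(iii) (with the supremum over small cubes and $t>\epsilon$ on \emph{both} sides) together with constants uniform in the truncation — that is a strictly stronger hypothesis than what Lemma~\ref{lemma-TbRoot} assumes, and nothing in the statement lets you deduce it. In effect you would be proving a different lemma.

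The paper sidesteps the circularity entirely by bounding the Carleson-type integral on the right of~(iii) \emph{directly} by a constant, never appealing to Lemma~\ref{lemma:wtd-carleson}. The mechanism is a decomposition of $\gamma_t(x)\nabla P_t F_Q(x)$: using $\gamma_t=\theta_t\mathbf{1}$ with $\theta_t=2te^{-t^2\mathcal{L}_w}w^{-1}\mathrm{div}\,\mathbf{A}$ and a cutoff $\tilde\chi$, one writes
$\gamma_t\cdot P_t(\nabla\tilde\chi\tilde F_Q) = R_t(\nabla\tilde\chi\tilde F_Q) + \theta_t(\nabla\tilde\chi\tilde F_Q)$ with $R_t=\theta_t\mathbf{1}(x)P_t-\theta_t$. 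The $R_t$-term has the Taylor-remainder structure handled by Lemma~\ref{lemma-sq-Taylor}, which together with the weighted Poincar\'e inequality and hypothesis~(i) gives a bound by $C\,w(Q)$. The $\theta_t$-term equals $-2te^{-t^2\mathcal{L}_w}\mathcal{L}_w(\tilde\chi\tilde F_Q)$, which is controlled by the $L^2(w)$ contraction of the semigroup and then \emph{hypothesis~(ii)}. The fact that your proof never uses~(ii) should have been a warning sign: (ii) is exactly what makes the direct, non-circular bound possible. To repair your argument you would either need to establish a priori finiteness through a truncated version of~(iii) (changing the lemma), or — better — abandon the Carleson embedding and prove the direct bound as the paper does, via Lemma~\ref{lemma-sq-Taylor} and hypothesis~(ii).
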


\begin{proof}
We follow the proof of the unweighted lemma in \cite{hofmann01}. Since the
kernel of $V_{t}$ satisfies the Gaussian bounds (\ref{ineq:gauss-p}), we
have that%
\begin{equation}
\sup_{t>0}\left\Vert \gamma _{t}\right\Vert _{\infty }\leq C\left( n,\lambda
,\Lambda ,w\right) <\infty ,  \label{ineq:gammat-b}
\end{equation}%
Indeed, since $V_{t}$ has zero moment, 
\begin{equation*}
\gamma _{t}\left( x\right) =V_{t}\phi \left( x\right) =\int_{\mathbb{R}%
^{n}}V_{t}(x,y)y~dy=-\int_{\mathbb{R}^{n}}V_{t}(x,y)\left( x-y\right) ~dy;
\end{equation*}%
thus, applying the Gaussian estimates, 
\begin{multline}  \label{eqn-sup-gamma}
\left\vert \gamma _{t}\left( x\right) \right\vert \leq \frac{C_{1}}{t^{n+1}}%
\int_{\mathbb{R}^{n}}\exp \left( -C_{2}\frac{|x-y|^{2}}{t^{2}}\right)
\left\vert x-y\right\vert \, dy \\
=C_{1}\int_{0}^{\infty }\int_{S^{n-1}}\exp \left( -C_{2}\frac{r^{2}}{t^{2}}%
\right) \,\left( \dfrac{r}{t}\right) ^{n}d\omega \,\dfrac{dr}{t} \leq
C_{1}\left\vert S^{n-1}\right\vert \int_{0}^{\infty }\exp \left(
-C_{2}s^{2}\right) s^{n}\,ds<\infty ,
\end{multline}
and this bound is independent of $t$. Hence, given (\ref{1:16iii}) to prove (%
\ref{eqn:Carleson}) we only need to show that 
\begin{equation}
\sup_{Q}\frac{1}{w\left( Q\right) }\int_{Q}\int_{0}^{\ell \left( Q\right)
}\left\vert \gamma _{t}\left( x\right) \nabla P_{t}F_{Q}\left( x\right)
\right\vert ^{2}\frac{dt}{t}w\left( x\right) dx\leq C\left( n,\lambda
,\Lambda ,w\right) <\infty .  \label{eqn:tb1}
\end{equation}
Recall that $P_{t}g=p_{t}\ast g$ with $p_{t}\left( x\right) =t^{-n}p\left( 
\frac{x}{t}\right) $, $p\in \mathcal{C}_{0}^{\infty }\left( B_{1}\left(
0\right) \right) $ is radial and $\int p\left( x\right) dx=1$. Let $\tilde{%
\chi}\left( x\right) \in \mathcal{C}_{0}^{\infty }\left( \mathbb{R}%
^{n}\right) $ satisfy 
\begin{equation*}
{\mathrm{supp}}\left( \tilde{\chi}\right) \subset 4Q, \quad 0 \leq \tilde{%
\chi} \leq 1, \quad \left. \tilde{\chi}\right\vert _{3Q}\equiv 1,\quad
\left\vert \nabla \tilde{\chi}\right\vert \leq C\ell \left( Q\right) ^{-1}.
\end{equation*}
Since $p$ is supported in the unit ball and the gradient operator commutes
with $P_{t}$, for $x\in Q$ and $0\leq t\leq \ell \left( Q\right) $ we have
that 
\begin{multline}
\nabla P_{t}F_{Q}\left( x\right) = P_{t}\nabla F_{Q}\left( x\right)
=\int_{B_{t}\left( 0\right) }p_{t}\left( y\right) \left( \nabla \tilde{\chi}%
F_{Q}\right) \left( x-y\right) ~dy \\
= P_{t}\nabla \tilde{\chi}F_{Q}\left( x\right) =P_{t}\nabla \tilde{\chi}%
\left( F_{Q}\left( x\right) -c_{0}\right),  \label{eqn:tb1.5}
\end{multline}
where $c_{0}$ is a constant to be fixed below. Let $\tilde{F}_{Q}=F_{Q}-c_{0}
$; then (\ref{eqn:tb1}) is equivalent to 
\begin{equation}
\sup_{Q}\frac{1}{w\left( Q\right) }\int_{Q}\int_{0}^{\ell \left( Q\right)
}\left\vert \gamma _{t}\left( x\right) P_{t}\nabla \tilde{\chi}\tilde{F}%
_{Q}\left( x\right) \right\vert ^{2}\frac{dt}{t}w\left( x\right) dx\leq
C\left( n,\lambda ,\Lambda ,w\right) <\infty .  \label{eqn:tb2}
\end{equation}
Recall that $\frac{1}{2}V_{t}=-t{\mathcal{L}}_{w}e^{-t^{2}{\mathcal{L}}%
_{w}}=-te^{-t^{2}{\mathcal{L}}_{w}}{\mathcal{L}}_{w}=te^{-t^{2}{\mathcal{L}}%
_{w}}w^{-1}\func{div}\mathbf{A}\nabla $. Define $\theta _{t}=2te^{-t^{2}%
\mathcal{L}_{w}}w^{-1}\func{div}\mathbf{A}$; then $\theta_t$ acts on $%
n\times n$ matrix valued functions. If $\mathbf{1}$ is the $n\times n$
identity matrix, $\gamma _{t}=\theta _{t}\nabla \phi =\theta _{t}\mathbf{1}$%
, so for $x\in Q$ and $0\leq t\leq \ell \left( Q\right) $ we can write 
\begin{eqnarray*}
\gamma _{t}\left( x\right) \nabla P_{t}\tilde{\chi}\tilde{F}_{Q}\left(
x\right) &=&\theta _{t}\mathbf{1}\left( x\right) \cdot P_{t}\left( \nabla 
\tilde{\chi}\tilde{F}_{Q}\right) \left( x\right) \\
&=&\left[ \theta _{t}\mathbf{1}\left( x\right) \cdot P_{t}-\theta _{t}\right]
\left( \nabla \tilde{\chi}\tilde{F}_{Q}\right) \left( x\right) +\theta
_{t}\left( \nabla \tilde{\chi}\tilde{F}_{Q}\right) \left( x\right) \\
&=&R_{t}\left( \nabla \tilde{\chi}\tilde{F}_{Q}\right) \left( x\right)
+\theta _{t}\left( \nabla \tilde{\chi}\tilde{F}_{Q}\right) \left( x\right) ,
\end{eqnarray*}
where $R_{t}=\theta _{t}\mathbf{1}\left( x\right) P_{t}-\theta _{t}$. We
claim that 
\begin{equation}
\frac{1}{w\left( Q\right) }\int_{Q}\int_{0}^{\ell \left( Q\right)
}\left\vert R_{t}\left( \nabla \tilde{\chi}\tilde{F}_{Q}\right) \left(
x\right) \right\vert ^{2}\frac{dt}{t}w\left( x\right) dx\leq \frac{C}{%
w\left( Q\right) }\int_{5Q}\left\vert \nabla F_{Q}\right\vert ^{2}w\left(
x\right) dx.  \label{ineq:sqrf}
\end{equation}
If this is the case, then by assumption \eqref{1:16i} 
\begin{equation*}
\frac{1}{w\left( Q\right) }\int_{Q}\int_{0}^{\ell \left( Q\right)
}\left\vert R_{t}\left( \nabla \tilde{\chi}\tilde{F}_{Q}\right) \left(
x\right) \right\vert ^{2}\frac{dt}{t}w\left( x\right) dx\leq C<\infty .
\end{equation*}
Furthermore, since the operator $e^{-t^{2}\mathcal{L}_{w}}$ is a contraction
in $L^{2}\left( w\right) $, by \eqref{1:16ii}, we have 
\begin{align*}
&\frac{1}{w\left( Q\right) }\int_{Q}\int_{0}^{\ell \left( Q\right)
}\left\vert \theta _{t}\left( \nabla \tilde{\chi}\tilde{F}_{Q}\right) \left(
x\right) \right\vert ^{2}\frac{dt}{t}w\left( x\right) dx \\
& \qquad =\frac{4}{w\left( Q\right) }\int_{Q}\int_{0}^{\ell \left( Q\right)
}\left\vert e^{-t^{2}\mathcal{L}_{w}}\mathcal{L}_{w}\tilde{\chi}\tilde{F}%
_{Q}\left( x\right) \right\vert ^{2}t\, dt\, w\left( x\right) dx \\
& \qquad \leq \frac{4}{w\left( Q\right) }\int_{0}^{\ell \left( Q\right)
}\int_{5Q}\left\vert \mathcal{L}_{w}F_{Q}\left( x\right) \right\vert
^{2}w\left( x\right) dx\, t \, dt \\
&\qquad \leq C\ell \left( Q\right) ^{-2}\int_{0}^{\ell \left( Q\right) }t \,
dt \\
& \qquad \leq C<\infty.
\end{align*}

Therefore, to complete the proof we only have to show that (\ref{ineq:sqrf})
holds. Since $\int V_{t}\left( x,y\right) dy=0$, 
\begin{align*}
& R_{t}\left( \nabla \tilde{\chi}\tilde{F}_{Q}\right) \left( x\right) \\
&\qquad =\theta _{t}\mathbf{1}\left( x\right) \cdot P_{t}\left( \nabla 
\tilde{\chi}\tilde{F}_{Q}\right) \left( x\right) -\theta _{t}\left( \nabla 
\tilde{\chi}\tilde{F}_{Q}\right) \left( x\right) \\
&\qquad = -2te^{-t^{2}\mathcal{L}_{w}}\mathcal{L}_{w}\phi \left( x\right)
\cdot P_{t}\left( \nabla \tilde{\chi}\tilde{F}_{Q}\right) \left( x\right)
-2te^{-t^{2}\mathcal{L}_{w}}\mathcal{L}_{w}\tilde{\chi}\tilde{F}_{Q}\left(
x\right) \\
& \qquad =\left( \int V_{t}\left( x,y\right) y\,dy\right) P_{t}\left( \nabla 
\tilde{\chi}\tilde{F}_{Q}\right) \left( x\right) +\int V_{t}\left(
x,y\right) \left( \tilde{\chi}\tilde{F}_{Q}\right) \left( y\right) dy \\
&\qquad = \int V_{t}\left( x,y\right) \left[ \left( \tilde{\chi}\tilde{F}%
_{Q}\right) \left( y\right) -\left( \tilde{\chi}\tilde{F}_{Q}\right) \left(
x\right) -\left( y-x\right) P_{t}\left( \nabla \tilde{\chi}\tilde{F}%
_{Q}\right) \left( x\right) \right] dy \\
& \qquad = \int V_{t}\left( x,y\right) \tilde{G}_t(x,y)\,dy.
\end{align*}
By Lemma \ref{lemma-sq-Taylor}, 
\begin{multline}
\int_{\mathbb{R}^{n}}\int_{0}^{\infty }\left\vert R_{t}\left( \nabla \tilde{%
\chi}\tilde{F}_{Q}\right) \left( x\right) \right\vert ^{2}\frac{dt}{t}%
w\left( x\right) ~dx \\
=\int_{\mathbb{R}^{n}}\int_{0}^{\infty }\left\vert \int V_{t}\left(
x,y\right) \tilde{G}_{t}\left( \tilde{F}_{Q}\right) \left( x,y\right)
dy\right\vert ^{2}\frac{dt}{t}w\left( x\right) ~dx \leq C\left(
n,[w]_{A_{2}}\right) ^{2}\left\Vert {\nabla }\tilde{\chi}\tilde{F}%
_{Q}\right\Vert _{L^{2}(w)}^{2}.  \label{eqn:tb3}
\end{multline}
We estimate the right hand side using the product rule and the weighted
Poincar\'{e} inequality (see \cite{fabes-kenig-serapioni82}); if we fix $%
c_{0}=\frac{1}{w\left( 5Q\right) }\int_{5Q}F_{Q}\left( x\right) dx$, then 
\begin{align*}
\left\Vert {\nabla }\tilde{\chi}F_{Q}\right\Vert _{L^{2}(w)}^{2} & =\int
\left\vert {\nabla }\tilde{\chi}\tilde{F}_{Q}(x)\right\vert ^{2}w(x)\,dx \\
& \leq \frac{C}{\ell ^{2}\left( Q\right) }\int_{5Q}\left\vert
F_{Q}(x)-c_{0}\right\vert ^{2}w(x)\,dx +C\int_{5Q}\left\vert {\nabla }%
F_{Q}(x)\right\vert ^{2}w(x)\,dx \\
&\leq \frac{C\ell ^{2}\left( Q\right) }{\ell ^{2}\left( Q\right) }%
\int_{5Q}\left\vert \nabla F_{Q}(x)\right\vert ^{2}w(x)\,dx
+C\int_{5Q}\left\vert {\nabla }F_{Q}(x)\right\vert ^{2}w(x)\,dx \\
&\leq C\int_{5Q}\left\vert {\nabla }F_{Q}(x)\right\vert ^{2}w(x)\,dx.
\end{align*}
This proves (\ref{ineq:sqrf}) and our proof is complete.
\end{proof}

In our proof we actually need to replace (\ref{1:16iii}) with a more
complicated criterion; for ease of reference we record this as a separate
lemma.

\begin{lemma}
\label{lemma-TbRoot-mod} Suppose that there exists a finite index set $%
\left\{ \nu \right\} $ with cardinality $N=N\left( n,\lambda ,\Lambda
,w\right) $, such that for each cube $Q$ there are mappings $F_{Q,\nu
}:5Q\rightarrow \mathbb{C}$ and a constant $C=C\left( n,\lambda ,\Lambda
,w\right) <\infty $ satisfying%
\renewcommand{\theenumi}{\Roman{enumi}}%

\begin{enumerate}
\renewcommand{\itemsep}{6pt}

\item \label{2.23i}$\displaystyle \int_{5Q}\left\vert \nabla F_{Q,\nu
}\left( x\right) \right\vert ^{2}w\left( x\right) dx\leq C~w\left( Q\right) ,
$

\item \label{2.23ii}$\displaystyle \int_{10Q}\left\vert \mathcal{L}%
_{w}F_{Q,\nu }\left( x\right) \right\vert ^{2}w\left( x\right) dx\leq
Cw\left( Q\right) \ell \left( Q\right) ^{-2},$

\item \label{2.23iii}$\displaystyle{
\|\gamma_t\|_{C,w}^2 \leq C\sup_{t>0} \| \gamma _{t}\| _{\infty}^{2} +
C\sum_{\nu }\frac{1}{w( Q) }\int_{Q}\int_{0}^{\ell (Q) }| \gamma _{t}( x)
\nabla P_{t}F_{Q,\nu }(x) |^{2}\frac{dt}{t}\,w( x)\, dx. }$
\end{enumerate}

\smallskip

\noindent Then $\left\Vert \gamma _{t}\right\Vert _{C,w}\leq C\left(
n,\lambda ,\Lambda ,w\right) <\infty $: i.e., $\gamma _{t}$ is a Carleson
measure with respect to $w$.
\end{lemma}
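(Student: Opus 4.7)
The plan is to reduce Lemma~\ref{lemma-TbRoot-mod} to $N$ applications of the argument used in the proof of Lemma~\ref{lemma-TbRoot}. Hypothesis (\ref{2.23iii}) differs from (\ref{1:16iii}) only in that the supremum over $Q$ is now outside a finite sum over $\nu$; since the cardinality $N$ of the index set depends only on $n,\lambda,\Lambda,w$, it suffices to obtain, for each fixed $Q$ and each $\nu$, a uniform bound of the form
$$\frac{1}{w(Q)}\int_Q\int_0^{\ell(Q)} \bigl|\gamma_t(x)\,\nabla P_t F_{Q,\nu}(x)\bigr|^{2}\,\frac{dt}{t}\,w(x)\,dx\leq C(n,\lambda,\Lambda,w),$$
since the companion bound $\sup_{t>0}\|\gamma_t\|_\infty \leq C(n,\lambda,\Lambda,w)$ follows from the same zero-moment computation as in \eqref{eqn-sup-gamma}, using only Lemma~\ref{lemma:gaussian-prime}.

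To establish this uniform bound I would repeat the construction from Lemma~\ref{lemma-TbRoot} with the scalar-valued test function $F_{Q,\nu}$ replacing the vector-valued $F_Q$. Setting $c_{Q,\nu} = w(5Q)^{-1}\int_{5Q}F_{Q,\nu}\,dx$, $\tilde F_{Q,\nu}=F_{Q,\nu}-c_{Q,\nu}$, and choosing the same cutoff $\tilde\chi \in C_c^\infty$ with $\tilde\chi\equiv 1$ on $3Q$, $\mathrm{supp}(\tilde\chi)\subset 4Q$, and $|\nabla\tilde\chi|\leq C/\ell(Q)$, the support condition on $p$ gives $\nabla P_t F_{Q,\nu}(x) = P_t\nabla(\tilde\chi\tilde F_{Q,\nu})(x)$ for $x\in Q$ and $0<t\leq\ell(Q)$. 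The splitting
$$\gamma_t(x)\cdot\nabla P_t F_{Q,\nu}(x) = R_t(\nabla\tilde\chi\tilde F_{Q,\nu})(x) + \theta_t(\nabla\tilde\chi\tilde F_{Q,\nu})(x), \qquad R_t = (\theta_t\mathbf{1})(x)P_t - \theta_t,$$
then carries over verbatim.

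The $R_t$-term is rewritten, via $V_t1=0$, as $\int V_t(x,y)\tilde G_t(x,y)\,dy$ with $\tilde G_t(x,y) = (\tilde\chi\tilde F_{Q,\nu})(y) - (\tilde\chi\tilde F_{Q,\nu})(x) - (y-x)\cdot P_t\nabla(\tilde\chi\tilde F_{Q,\nu})(x)$, which is precisely the form required by Lemma~\ref{lemma-sq-Taylor}; applying that lemma, followed by the product rule, the weighted Poincar\'e inequality of Fabes--Kenig--Serapioni, and hypothesis (\ref{2.23i}), bounds the square function norm by $Cw(Q)$, with the choice of $c_{Q,\nu}$ being exactly what makes Poincar\'e applicable. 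The $\theta_t$-term is handled by $L^2(w)$-contractivity of $e^{-t^2\mathcal{L}_w}$ together with hypothesis (\ref{2.23ii}); the weight $t\,dt$ integrated over $(0,\ell(Q))$ contributes a factor $\ell(Q)^2$ that cancels the $\ell(Q)^{-2}$ in (\ref{2.23ii}), again giving $Cw(Q)$. Dividing by $w(Q)$ and summing over the $N$ indices $\nu$ completes the argument. I expect no genuinely new analytic obstacle here; the only delicate step is verifying that the decomposition $R_t+\theta_t$ and all intermediate estimates remain valid componentwise for scalar test functions, and that no cross-terms appear between different $\nu$, both of which are automatic since (\ref{2.23iii}) already provides a sum rather than a product of $\nu$-terms.
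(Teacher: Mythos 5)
Your proposal is correct and follows essentially the same route as the paper: the paper simply states that the argument reduces, via the zero-moment bound \eqref{eqn-sup-gamma}, to establishing the per-cube, per-$\nu$ square-function estimate, and that this ``is done exactly as in the proof of (\ref{eqn:tb1}).'' You have fleshed out precisely that reduction — the cutoff $\tilde\chi$, the splitting into $R_t$ and $\theta_t$, Lemma~\ref{lemma-sq-Taylor} plus Poincar\'e for the $R_t$-term, and $L^2(w)$-contractivity of the semigroup plus hypothesis (\ref{2.23ii}) for the $\theta_t$-term — and correctly observed that the finite sum over $\nu$ contributes only a factor $N=N(n,\lambda,\Lambda,w)$, so no new ideas are needed.
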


The proof is essentially identical to the proof of Lemma~\ref{lemma-TbRoot}.
By (\ref{eqn-sup-gamma}), it is enough to show that 
\begin{equation*}
\sup_{Q}\frac{1}{w\left( Q\right) }\int_{Q}\int_{0}^{\ell \left( Q\right)
}\left\vert \gamma _{t}\left( x\right) \nabla P_{t}F_{Q,\nu }\left( x\right)
\right\vert ^{2}\frac{dt}{t}\,w( x)\, dx\leq C\left( n,\lambda ,\Lambda
,w\right) <\infty,
\end{equation*}
and this is done exactly as in the proof of (\ref{eqn:tb1}).

\section{Proof of the Weighted Kato Theorem}

\label{section:test-func}

To complete the proof of Theorem~\ref{theorem:WKato} we will construct a
finite index set $\{\nu\}$ and for each cube $Q\subset {\mathbb{R}^n}$ a
family of functions $F_{Q,\nu}: 5Q\rightarrow \mathbb{C}$ that satisfy the
hypotheses of Lemma~\ref{lemma-TbRoot-mod}. To do so we adapt the proof of
the non-weighted case in~\cite{hofmann01}.

Recall that $\varphi \left( x\right) =x$. Given a cube $Q\subset {\mathbb{R}%
^n}$ define the function 
\begin{equation}  \label{FQ}
F_{Q}\left( x\right) =e^{-\varepsilon ^{2}\ell \left( Q\right) ^{2}\mathcal{L%
}_{w}}\varphi \left( x\right),
\end{equation}
where $\varepsilon >0$ will be chosen below. The set $\{\nu\}$ will be a
finite collection of vectors in $\mathbb{C}^n$, $|\nu|=1$, also to be chosen
below. Define 
\begin{equation*}
F_{Q,\nu} = F_Q\cdot \bar{\nu}. 
\end{equation*}

Since $|F_{Q,\nu}|\leq |F_Q|$, and similarly for the gradient, to prove %
\eqref{2.23i} and \eqref{2.23ii} in Lemma~\ref{lemma-TbRoot-mod} it will
suffice to prove that $F_Q$ satisfies \eqref{1:16i} and \eqref{1:16ii} in
Lemma~\ref{lemma-TbRoot}. To prove \eqref{1:16ii}: from (\ref{eqn-sup-gamma}%
) we have%
\begin{equation*}
\left\vert t{\mathcal{L}}_{w}e^{-t^{2}{\mathcal{L}}_{w}}{\varphi }\left(
x\right) \right\vert =\left\vert V_{t}\varphi \left( x\right) \right\vert
=\left\vert \int V_{t}\left( x,y\right) y\,dy\right\vert \leq C<\infty
\end{equation*}%
for some constant independent of $t$. Then%
\begin{equation*}
\left\vert \mathcal{L}_{w}F_{Q}\left( x\right) \right\vert =\frac{1}{%
\varepsilon \ell \left( Q\right) }\left\vert V_{\varepsilon \ell \left(
Q\right) }\varphi \left( x\right) \right\vert =\frac{1}{\varepsilon \ell
\left( Q\right) }\left\vert \int V_{\varepsilon \ell \left( Q\right) }\left(
x,y\right) y\,dy\right\vert \leq \frac{C}{\varepsilon \ell \left( Q\right) }.
\end{equation*}
Since $w\in A_2$ it is a doubling measure, and so 
\begin{equation}
\int_{10Q}\left\vert \mathcal{L}_{w}F_{Q}\left( x\right) \right\vert
^{2}w\left( x\right) dx\leq \frac{C}{\varepsilon ^{2}}w\left( Q\right) \ell
\left( Q\right) ^{-2}.  \label{1.16ii-proved}
\end{equation}%
This proves (\ref{1:16ii}).

To prove \eqref{1:16i}, fix $\eta \in \mathcal{C}_{0}^{\infty }\left( 
\mathbb{R}^{n}\right) $ such that $\eta \equiv 1$ in $5Q$, $\eta \equiv 0$
in ${\mathbb{R}^n}\setminus 10Q$, $\left\Vert \eta \right\Vert _{\infty
}\leq 1$ and $\left\Vert \nabla \eta \right\Vert _{\infty }\leq C\ell \left(
Q\right) ^{-1}$. Then, by the ellipticity condition \eqref{eqn:degen}, 
\begin{multline}  \label{eqn:extra-step}
\int_{5Q}| {\nabla} F_{Q}(x)|^{2}w(x)\, dx \\
\leq \int_{\mathbb{R}^n}| {\nabla} F_{Q}(x)| ^{2}\eta(x)^{2}w(x) \,dx \leq
\lambda^{-1}\int_{\mathbb{R}^n}\mathbf{A}(x) {\nabla} F_{Q}(x) \cdot {\nabla}
F_{Q}(x) \eta(x)^{2}\,dx.
\end{multline}
By the conservation property, Lemma \ref{lemma:conservation}, $e^{-t^{2}%
\mathcal{L}_{w}}\mathbf{1}=\mathbf{1}$; hence, $\nabla _{x}e^{-t^{2}\mathcal{%
L}_{w}}\mathbf{1}=0$ and so we can write%
\begin{multline*}
{\nabla} _{x}e^{-t^{2}{\mathcal{L}}_w}\varphi (x) = {\nabla}_{x}\int
W_{t^{2}}\left( x,y\right) y\,dy \\
= {\nabla} _{x}\int W_{t^{2}}\left( x,y\right) ( y-x)\, dy+\mathbf{1} = {%
\nabla} _{x}G_{t}(x) +\mathbf{1},
\end{multline*}
where $G_{t}(x) =\big( e^{-t^{2}{\mathcal{L}}_w}-I\big)\varphi (x) $. Thus $%
\nabla F_{Q}=\mathbf{1}+\nabla G_{\varepsilon \ell \left( Q\right) }$. By
the Gaussian decay~(\ref{ineq:gauss2}) of the heat kernel, 
\begin{equation}
\left\vert G_{t}\left( x\right) \right\vert \leq \dfrac{C_{1}}{t^{n}}\dint
\exp \left( -C_{2}\dfrac{\left\vert x-y\right\vert ^{2}}{t^{2}}\right)
\left\vert y-x\right\vert dy\leq Ct.  \label{Gt}
\end{equation}
Integrating by parts, we have that 
\begin{align*}
& \int_{\mathbb{R}^n} \mathbf{A}(x) {\nabla} F_Q(x) \cdot {\nabla}
G_{\varepsilon\ell(Q)}(x)\eta(x)^2\,dx \\
& \qquad = \int_{\mathbb{R}^n} \mathbf{A}(x) {\nabla} F_Q(x) \cdot {\nabla}
(G_{\varepsilon\ell(Q)}\eta^2)(x)\,dx \\
& \qquad \qquad \qquad - 2\int_{\mathbb{R}^n} \mathbf{A}(x) {\nabla} F_Q(x)
\cdot G_{\varepsilon\ell(Q)}(x){\nabla}\eta(x)\eta(x)\,dx \\
& \qquad = \int_{\mathbb{R}^n} {\mathcal{L}}_w F_Q(x)
G_{\varepsilon\ell(Q)}(x)\eta(x)^2\,dx \\
& \qquad \qquad \qquad - 2\int_{\mathbb{R}^n} \mathbf{A}(x) {\nabla} F_Q(x)
\cdot G_{\varepsilon\ell(Q)}(x){\nabla}\eta(x)\eta(x)\,dx. \\
\end{align*}

If we combine this with \eqref{eqn:extra-step} we get that 
\begin{align*}
&\int_{\mathbb{R}^n}| {\nabla} F_{Q}(x) | ^{2}w(x) \eta(x)^{2}\,dx \\
& \qquad \leq \lambda ^{-1}\int_{\mathbb{R}^n}\mathbf{A}(x) {\nabla}
F_{Q}(x) \cdot \left( \mathbf{1}+{\nabla} G_{\varepsilon \ell(Q) }(x)
\right) \eta(x)^{2}dx \\
&\qquad \leq \lambda ^{-1}\int_{\mathbb{R}^n}| \mathbf{A}(x) {\nabla}
F_{Q}(x) \cdot \mathbf{1}| \eta(x)^{2}dx \\
& \qquad \qquad +\lambda ^{-1}\int_{\mathbb{R}^n}| {\mathcal{L}}_wF_{Q}(x) |
| G_{\varepsilon \ell (Q) }(x)|\eta(x)^{2}w(x)\, dx \\
& \qquad \qquad +2\lambda ^{-1}\int_{\mathbb{R}^n}\eta(x) | \mathbf{A}(x) {%
\nabla} F_{Q}(x) \cdot {\nabla} \eta(x) | | G_{\varepsilon \ell \left(
Q\right) }(x)|\, dx.
\end{align*}
By ellipticity, the inequality $| 2ab| \leq \delta a^{2}+\delta ^{-1}b^{2}$, 
$\delta >0$, and \eqref{Gt} we get 
\begin{align*}
& \int_{\mathbb{R}^n}| {\nabla} F_{Q}(x) | ^{2}w(x) \eta(x) ^{2}\,dx \\
& \qquad \leq \frac{n\delta \Lambda}{2\lambda } \int_{\mathbb{R}^n}| {\nabla}
F_{Q}(x) |^{2}w(x) \eta(x)^{2}dx + \frac{n\Lambda}{2 \lambda\delta }\int_{%
\mathbb{R}^n}\eta ^{2}w(x)\, dx \\
& \qquad \qquad +C\frac{\varepsilon ^{2}\ell(Q)^2}{\lambda} \int_{10Q}|{%
\mathcal{L}}_wF_{Q}(x)|^{2}w(x)\, dx + \frac{C}{\lambda}\int_{\mathbb{R}^n}%
\eta(x) ^{2}w(x) \,dx \\
&\qquad \qquad + \frac{\delta \Lambda}{ \lambda} \int_{\mathbb{R}^n}| {\nabla%
} F_{Q}(x)| ^{2}\eta(x) ^{2}w(x)\, dx +\frac{\Lambda }{\lambda\delta }%
\varepsilon ^{2} \ell(Q)^{2}\int_{10Q}| {\nabla} \eta(x) |^{2}w(x)\, dx.
\end{align*}
If we apply \eqref{1:16ii}, use the fact that $\varepsilon \leq 1$, $|{\nabla%
} \eta| \approx\ell(Q) ^{-1}$ and $w$ is doubling, and if we take $\delta>0$
sufficiently small, then 
\begin{equation*}
\int_{\mathbb{R}^n}| {\nabla} F_{Q}(x) | ^{2}w(x) \eta(x)^{2}\,dx \leq \frac{%
1}{2}\int_{\mathbb{R}^n}|{\nabla} F_{Q}(x)|^{2}w(x) \eta(x) ^{2}\,dx +C\frac{%
\Lambda }{\lambda}w( Q) .
\end{equation*}
Re-arranging terms and using the fact that $\eta \equiv 1$ on $5Q$, we get %
\eqref{1:16i}.

\medskip

To prove that \eqref{2.23iii} holds we make two reductions. First, recall
that the averaging operator $A_{t}$ is defined by 
\begin{equation*}
A_{t}f(x)=%
\mathchoice {{\setbox0=\hbox{$\displaystyle{\textstyle
-}{\int}$} \vcenter{\hbox{$\textstyle -$}}\kern-.5\wd0}} {{\setbox0=\hbox{$\textstyle{\scriptstyle -}{\int}$} \vcenter{\hbox{$\scriptstyle -$}}\kern-.5\wd0}} {{\setbox0=\hbox{$\scriptstyle{\scriptscriptstyle -}{\int}$} \vcenter{\hbox{$\scriptscriptstyle -$}}\kern-.5\wd0}} {{\setbox0=\hbox{$\scriptscriptstyle{\scriptscriptstyle -}{\int}$} \vcenter{\hbox{$\scriptscriptstyle -$}}\kern-.5\wd0}}%
\!\int_{Q_{t}(x)}f(y)\,dy,
\end{equation*}%
where $Q_{t}(x)$ is the unique dyadic cube containing $x$ such that $t\leq
\ell (Q_{t}(x))<2t$. As before, let $\tilde{\chi}\left( x\right) \in 
\mathcal{C}_{0}^{\infty }\left( \mathbb{R}^{n}\right) $ be such that ${%
\mathrm{supp}}(\tilde{\chi}) \subset 4Q$, $\left. \tilde{\chi}\right\vert
_{3Q}\equiv 1$ and $\left\vert \nabla \tilde{\chi}\right\vert \leq C\ell
\left( Q\right) ^{-1}$. Then $P_{t}f\left( x\right) =P_{t}\tilde{\chi}%
f\left( x\right) $, and $A_{t}f\left( x\right) =A_{t}\tilde{\chi}f\left(
x\right) $ for all $x\in Q$ and $t$, $0\leq t\leq \ell \left( Q\right) $.
Since $P_{t}$ commutes with the gradient, by Lemma \ref{lemma:dyadic-conv}
we have that 
\begin{align*}
&\int_{Q}\int_{0}^{\ell \left( Q\right) }\left\vert \gamma _{t}\left(
x\right) \nabla P_{t}F_{Q,\nu }\left( x\right) \right\vert ^{2}\frac{dt}{t}%
w\left( x\right) dx \\
& \qquad \leq \int_{Q}\int_{0}^{\ell \left( Q\right) }\left\vert \gamma
_{t}\left( x\right) \left( P_{t}-A_{t}\right) \tilde{\chi}\nabla F_{Q,\nu
}\left( x\right) \right\vert ^{2}\frac{dt}{t}w\left( x\right) dx \\
&\qquad \qquad +\int_{Q}\int_{0}^{\ell \left( Q\right) }\left\vert \gamma
_{t}\left( x\right) A_{t}\nabla F_{Q,\nu }\left( x\right) \right\vert ^{2}%
\frac{dt}{t}w\left( x\right) dx \\
&\qquad \leq \sup_{t>0}\left\Vert \gamma _{t}\right\Vert _{\infty
}^{2}\int_{Q}\int_{0}^{\ell \left( Q\right) }\left\vert \left(
P_{t}-A_{t}\right) \tilde{\chi}\nabla F_{Q,\nu }\left( x\right) \right\vert
^{2}\frac{dt}{t}w\left( x\right) dx \\
& \qquad \qquad +\int_{Q}\int_{0}^{\ell \left( Q\right) }\left\vert \gamma
_{t}\left( x\right) A_{t}\nabla F_{Q,\nu }\left( x\right) \right\vert ^{2}%
\frac{dt}{t}w\left( x\right) dx \\
&\qquad \leq C\sup_{t>0}\left\Vert \gamma _{t}\right\Vert _{\infty
}^{2}\int_{5Q}\left\vert \nabla F_{Q,\nu }\left( x\right) \right\vert
^{2}w\left( x\right) dx \\
& \qquad \qquad +\int_{Q}\int_{0}^{\ell \left( Q\right) }\left\vert \gamma
_{t}\left( x\right) A_{t}\nabla F_{Q,\nu }\left( x\right) \right\vert ^{2}%
\frac{dt}{t}w\left( x\right) dx.
\end{align*}
Therefore, by (\ref{2.23i}) we have 
\begin{multline*}
\frac{1}{w\left( Q\right) }\int_{Q}\int_{0}^{\ell \left( Q\right)
}\left\vert \gamma _{t}\left( x\right) \nabla P_{t}F_{Q,\nu }\left( x\right)
\right\vert ^{2}\frac{dt}{t}w\left( x\right) dx \\
\leq C\sup_{t>0}\left\Vert \gamma _{t}\right\Vert _{\infty }^{2}+\frac{1}{%
w\left( Q\right) }\int_{Q}\int_{0}^{\ell \left( Q\right) }\left\vert \gamma
_{t}\left( x\right) A_{t}\nabla F_{Q,\nu }\left( x\right) \right\vert ^{2}%
\frac{dt}{t}w\left( x\right) dx,
\end{multline*}
and so to prove (\ref{2.23iii}) it is enough to establish this estimate with 
$P_{t}$ replaced by $A_{t}$.

\smallskip

For our second reduction, we need a lemma that was proved in \cite{hofmann01}
in the unweighted case; essentially the same argument works in the weighted
case.

\begin{lemma}
\label{lemma-1.14} Suppose that there exist $0<\eta \leq 1$ such that for
all cubes $Q$, there exists a subset $E_{Q}\subset Q$ with $w\left(
E_{Q}\right) >\eta ~w\left( Q\right) $, and $Q\backslash E_{Q}=\bigcup Q_{j}$%
, where $\left\{ Q_{j}\right\}$ is a set of non-overlapping dyadic sub-cubes
of $Q$. Let $E_{Q}^{\ast}= R_{Q}\backslash \bigcup R_{Q_{j}}$, where $R_{Q}$
denotes the Carleson rectangle $Q\times \left( 0,\ell \left( Q\right)
\right) $ above $Q$. If for every cube $Q$, 
\begin{equation*}
\mu \left( E_{Q}^{\ast }\right) \leq C_{1}~w\left( Q\right),
\end{equation*}%
then $\mu $ is a Carleson measure with respect to $w$, and%
\begin{equation*}
\mu \left( Q^{\ast }\right) \leq \dfrac{C_{1}}{\eta }w\left( Q\right) .
\end{equation*}
\end{lemma}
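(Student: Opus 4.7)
The plan is to iterate the hypothesis $\mu(E_Q^*) \leq C_1 w(Q)$ along a stopping-time tree of dyadic descendants of $Q$. Directly from the definition $E_Q^* = R_Q \setminus \bigcup_j R_{Q_j}$, the Carleson rectangle decomposes as a disjoint union $R_Q = E_Q^* \sqcup \bigsqcup_j R_{Q_j}$. Combining this with the hypothesis gives the base recursion
\begin{equation*}
\mu(R_Q) \leq C_1 w(Q) + \sum_j \mu(R_{Q_j}),
\end{equation*}
while the geometric contraction $\sum_j w(Q_j) = w(Q \setminus E_Q) \leq (1-\eta)\, w(Q)$ follows from $w(E_Q) > \eta w(Q)$.

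Next, I would apply the same decomposition to each $Q_j$, then to their children, and so on. At depth $N$ this produces a collection of disjoint dyadic sub-cubes $\{Q_\alpha^{(N)}\}$ of $Q$ with total $w$-mass at most $(1-\eta)^N w(Q)$, yielding
\begin{equation*}
\mu(R_Q) \leq C_1\, w(Q) \sum_{k=0}^{N-1} (1-\eta)^k + \sum_\alpha \mu\bigl(R_{Q_\alpha^{(N)}}\bigr).
\end{equation*}
The geometric series is bounded by $C_1/\eta$, which is exactly the target constant.

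The main obstacle is controlling the tail $\sum_\alpha \mu(R_{Q_\alpha^{(N)}})$ as $N \to \infty$, since no a priori quantitative bound on $\mu(R_{Q'})/w(Q')$ is in hand (the pointwise estimate $\sup_t \|\gamma_t\|_\infty < \infty$ is insufficient because of the logarithmically divergent factor $\int_0^{\ell(Q')} dt/t$). I would handle this by a standard truncation: replace $\mu$ by its restriction $\mu_\varepsilon$ to the strip $\{t > \varepsilon\}$. Since each $Q_\alpha^{(N)}$ is a strict iterated dyadic sub-cube and so has side length at most $2^{-N}\ell(Q)$, the rectangle $R_{Q_\alpha^{(N)}}$ no longer meets $\{t > \varepsilon\}$ once $N$ is large enough, and the tail vanishes exactly. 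The iteration then gives $\mu_\varepsilon(R_Q) \leq (C_1/\eta) w(Q)$ uniformly in $\varepsilon$, and monotone convergence as $\varepsilon \downarrow 0$ transfers the bound to $\mu$. Uniformity in $Q$ is precisely the Carleson measure property with norm at most $C_1/\eta$.
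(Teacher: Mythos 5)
Your proof is correct, and it is essentially the standard John--Nirenberg/stopping-time iteration that the paper defers to (it simply cites Hofmann's lecture notes without reproducing the argument). The key ingredients are all in place: the exact disjoint decomposition $R_Q = E_Q^* \sqcup \bigsqcup_j R_{Q_j}$ (valid because each $Q_j$ is a proper dyadic sub-cube of $Q$, hence $R_{Q_j}\subset R_Q$), the geometric contraction $\sum_j w(Q_j)\le (1-\eta)w(Q)$, and the truncation to $\{t>\varepsilon\}$ to kill the tail after finitely many generations, followed by monotone convergence. One small point worth making explicit: the strict inequality $w(E_Q)>\eta\, w(Q)>0$ forces $E_Q\neq\emptyset$, so no $Q_j$ can equal $Q$; this is what guarantees $\ell(Q_\alpha^{(N)})\le 2^{-N}\ell(Q)$ and hence that the tail in the truncated measure vanishes exactly once $2^{-N}\ell(Q)\le\varepsilon$. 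The remainder of the bookkeeping (inequalities in $[0,\infty]$, no a priori finiteness of $\mu$ required) is handled cleanly; in particular, the iteration with the truncated measure actually \emph{proves} finiteness of $\mu(R_Q)$ rather than presupposing it.
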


Therefore, to complete our proof that \eqref{2.23iii} holds, we need to find
our finite index set $\{\nu\}$ and construct sets $E_Q$ as in Lemma~\ref%
{lemma-1.14} such that 
\begin{multline}  \label{eqn:finalstep}
\frac{1}{w(Q)}\iint_{E_Q^*} |\gamma_t(x)|^2\frac{dt}{t}\,w(x)\,dx \\
\leq C\sup_{t>0} \| \gamma _{t}\| _{\infty}^{2} + C\sum_{\nu }\frac{1}{w( Q) 
}\int_{Q}\int_{0}^{\ell (Q) }| \gamma _{t}( x) \nabla A_{t}F_{Q,\nu }(x)
|^{2}\,\frac{dt}{t}\,w( x)\, dx.
\end{multline}
We have now come to "the heart of the matter," as was said in \cite%
{hofmann01}. For every $\nu \in \mathbb{C}^{n} $ with $\left\vert \nu
\right\vert =1$ define the cone 
\begin{equation*}
\Gamma _{\nu }=\left\{ z\in \mathbb{C}^{n}:\left\vert z-\nu \left( z\cdot 
\bar{\nu}\right) \right\vert <\varepsilon \left\vert z\cdot \bar{\nu}%
\right\vert \right\} .
\end{equation*}
Clearly, for each $\varepsilon >0$ there exists a positive integer $%
N=N\left( \varepsilon ,n\right) $ and unit vectors $\nu _{1},\dots ,\nu
_{N}\in \mathbb{C}^{n}$ such that $\mathbb{C}^{n}\subset
\bigcup_{j=1}^{N}\Gamma _{\nu _{j}}$. Below we will fix our $\varepsilon$
and will then let $\{\nu\}=\{\nu_j\}_{j=1}^N$ be our index set.

We will first construct the sets $E_Q$. To do so, we will construct sets $%
E_{Q,\nu}$ that depend on $\nu$; the desired set $E_Q$ will be the union of
these sets over our index set $\{\nu\}$. We need two more lemmas. The first
is from \cite{hofmann01}; since its proof depends only on the Gaussian
bounds for the kernel of $e^{-t\mathcal{L}_{w}}$, it holds in the weighted
case without change.

\begin{lemma}
\label{lemma-2.1} There exists a constant $C>0$ depending only on the
constants $C_1,\,C_2$ in the Gaussian bounds, such that for any cube $%
Q\subset \mathbb{R}^{n}$, 
\begin{equation*}
\left\vert 
\mathchoice {{\setbox0=\hbox{$\displaystyle{\textstyle
-}{\int}$} \vcenter{\hbox{$\textstyle -$}}\kern-.5\wd0}} {{\setbox0=\hbox{$\textstyle{\scriptstyle -}{\int}$} \vcenter{\hbox{$\scriptstyle -$}}\kern-.5\wd0}} {{\setbox0=\hbox{$\scriptstyle{\scriptscriptstyle -}{\int}$} \vcenter{\hbox{$\scriptscriptstyle -$}}\kern-.5\wd0}} {{\setbox0=\hbox{$\scriptscriptstyle{\scriptscriptstyle -}{\int}$} \vcenter{\hbox{$\scriptscriptstyle -$}}\kern-.5\wd0}}%
\!\int_{Q}(\nabla F_{Q}(x)-\mathbf{1})\,dx\right\vert \leq C\varepsilon.
\end{equation*}
\end{lemma}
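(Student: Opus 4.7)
The plan is to reduce the claim to a boundary integral on $\partial Q$ via the divergence theorem and then control that boundary term using the pointwise bound (\ref{Gt}) on $G_t$. Recall that earlier, using the conservation property $e^{-t^{2}\mathcal{L}_{w}}\mathbf{1}=\mathbf{1}$, one has
\begin{equation*}
\nabla F_{Q}(x) = \mathbf{1} + \nabla G_{\varepsilon \ell(Q)}(x),
\qquad G_{t}(x) = (e^{-t^{2}\mathcal{L}_{w}}-I)\varphi(x),
\end{equation*}
so the claim is equivalent to showing
\begin{equation*}
\left|\,
\mathchoice {{\setbox0=\hbox{$\displaystyle{\textstyle -}{\int}$} \vcenter{\hbox{$\textstyle -$}}\kern-.5\wd0}} {{\setbox0=\hbox{$\textstyle{\scriptstyle -}{\int}$} \vcenter{\hbox{$\scriptstyle -$}}\kern-.5\wd0}} {{\setbox0=\hbox{$\scriptstyle{\scriptscriptstyle -}{\int}$} \vcenter{\hbox{$\scriptscriptstyle -$}}\kern-.5\wd0}} {{\setbox0=\hbox{$\scriptscriptstyle{\scriptscriptstyle -}{\int}$} \vcenter{\hbox{$\scriptscriptstyle -$}}\kern-.5\wd0}}
\!\int_{Q}\nabla G_{\varepsilon \ell(Q)}(x)\,dx\,\right| \leq C\varepsilon.
\end{equation*}

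Since $F_{Q}$, and hence $G_{t}$, is vector-valued with components $G_{t}^{j}(x)=\int W_{t^{2}}(x,y)(y_{j}-x_{j})\,dy$, the matrix entries $(\nabla G_{t})_{ji} = \partial_{i} G_{t}^{j}$ can be integrated separately. The smoothing properties of the semigroup $e^{-t^{2}\mathcal{L}_{w}}$ (see Section~\ref{section:preliminaries}) guarantee that each $G_{t}^{j}$ is smooth for $t>0$, so the classical divergence theorem applies on the cube $Q$: for each pair $i,j$,
\begin{equation*}
\int_{Q}\partial_{i} G_{\varepsilon\ell(Q)}^{j}(x)\,dx
= \int_{\partial Q} G_{\varepsilon\ell(Q)}^{j}(x)\,\mathbf{n}_{i}(x)\,d\sigma(x),
\end{equation*}
where $\mathbf{n}=(\mathbf{n}_{1},\dots,\mathbf{n}_{n})$ is the outward unit normal on $\partial Q$.

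Next I invoke the pointwise bound \eqref{Gt}, namely $|G_{t}(x)|\leq Ct$, which follows directly from the Gaussian bound \eqref{ineq:gauss2} on $W_{t^{2}}$ after integrating $|x-y|\exp(-C_{2}|x-y|^{2}/t^{2})$. Applying this with $t=\varepsilon\ell(Q)$ and using $|\partial Q|=2n\,\ell(Q)^{n-1}$, $|Q|=\ell(Q)^{n}$, we obtain
\begin{equation*}
\left|\,
\mathchoice {{\setbox0=\hbox{$\displaystyle{\textstyle -}{\int}$} \vcenter{\hbox{$\textstyle -$}}\kern-.5\wd0}} {{\setbox0=\hbox{$\textstyle{\scriptstyle -}{\int}$} \vcenter{\hbox{$\scriptstyle -$}}\kern-.5\wd0}} {{\setbox0=\hbox{$\scriptstyle{\scriptscriptstyle -}{\int}$} \vcenter{\hbox{$\scriptscriptstyle -$}}\kern-.5\wd0}} {{\setbox0=\hbox{$\scriptscriptstyle{\scriptscriptstyle -}{\int}$} \vcenter{\hbox{$\scriptscriptstyle -$}}\kern-.5\wd0}}
\!\int_{Q}\partial_{i}G_{\varepsilon\ell(Q)}^{j}(x)\,dx\,\right|
\leq \frac{C\,\varepsilon\ell(Q)\cdot 2n\,\ell(Q)^{n-1}}{\ell(Q)^{n}}
= C'(n)\,\varepsilon,
\end{equation*}
and summing the $n^{2}$ entries gives the stated inequality with a constant depending only on $n$ and on $C_{1},C_{2}$.

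The only genuine subtlety is justifying the divergence theorem on $\partial Q$; since $\partial Q$ is a null set in $\mathbb{R}^{n}$ one needs some regularity of $G_{t}$ up to the boundary. This is not an obstacle here because $t=\varepsilon\ell(Q)>0$ is strictly positive: parabolic regularity for $e^{-t^{2}\mathcal{L}_{w}}$ (equivalently the Hölder estimate \eqref{ineq:gauss3} combined with standard interior regularity of the semigroup) provides smoothness away from $t=0$. If one wanted to avoid any appeal to smoothness, the same bound can be obtained by approximating $\chi_{Q}$ by a smooth cutoff $\eta_{\delta}$ supported in a $\delta$-neighborhood of $Q$, integrating by parts against $\nabla\eta_{\delta}$, applying $|G_{\varepsilon\ell(Q)}|\leq C\varepsilon\ell(Q)$, and letting $\delta\to 0$; this produces exactly the same surface-measure factor $|\partial Q|/|Q|\approx\ell(Q)^{-1}$.
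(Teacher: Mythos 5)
Your argument is correct and is essentially the standard proof of this lemma; the paper itself gives no proof, instead citing Hofmann's notes~\cite{hofmann01} and remarking that the argument transfers verbatim because it uses only the Gaussian bounds, and the argument there is exactly the one you give: write $\nabla F_Q - \mathbf{1} = \nabla G_{\varepsilon\ell(Q)}$ via the conservation law, integrate by parts to move the derivative onto $\partial Q$ (or onto a smooth cutoff), and then use the pointwise size estimate $|G_t|\le Ct$ coming from \eqref{ineq:gauss2} to bound the boundary term by $C\varepsilon\ell(Q)\,|\partial Q|/|Q| = C(n)\varepsilon$. Your caveat about justifying the divergence theorem is handled cleanly by the smooth-cutoff variant you sketch, since $\nabla F_Q \in L^2(5Q,w)$ together with $w^{-1}\in A_2$ gives $\nabla G_{\varepsilon\ell(Q)} \in L^1_{\mathrm{loc}}$, which is all the integration-by-parts identity against $\nabla\eta_\delta$ requires.
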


The second lemma gives two properties of $A_2$ weights. The first is the
well-known $A_\infty$ condition, and the second is closely related. For a
proof see~\cite{garciacuerva-rubiodefrancia85,grafakos04}.

\begin{lemma}
\label{lemma:Ainfty} If $w\in A_2$, there exists constants $\alpha,\,\delta>0
$ and constants $\beta,\,\epsilon>0$ such that given any cube $Q$ and
measurable set $E\subset Q$, 
\begin{equation*}
\frac{w(E)}{w(Q)} \leq \alpha\left(\frac{|E|}{|Q|}\right)^\delta \qquad 
\text{and} \qquad  \frac{|E|}{|Q|} \leq \beta\left(\frac{w(E)}{w(Q)}%
\right)^\epsilon. 
\end{equation*}
\end{lemma}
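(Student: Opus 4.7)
The plan is to derive both inequalities from the reverse Hölder self-improvement property of $A_2$ weights. The crux is to show that every $w\in A_2$ lies in $RH_s$ for some $s=s([w]_{A_2})>1$: that is, there exists $C=C([w]_{A_2})$ such that
\begin{equation*}
\left(\frac{1}{|Q|}\int_Q w^s\,dx\right)^{1/s} \leq \frac{C}{|Q|}\int_Q w\,dx
\end{equation*}
for every cube $Q$. This is the classical Gehring lemma; the standard proof performs a Calderón--Zygmund stopping-time decomposition of $w$ at heights $\lambda$ above its average over $Q$, uses the $A_\infty$ property implied by $A_2$ to obtain a bound of the form $w(\{w>\lambda\}\cap Q) \leq \eta\, w(\{w>c\lambda\}\cap Q) + \text{error}$ with $\eta<1$, and then integrates in $\lambda$ to bootstrap integrability.

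Granting the reverse Hölder inequality, the first claim follows from Hölder's inequality with conjugate exponents $s$ and $s'$, together with $E\subset Q$:
\begin{equation*}
w(E) \leq |E|^{1/s'}\Bigl(\int_Q w^s\,dx\Bigr)^{1/s} \leq C|E|^{1/s'}|Q|^{-1/s'}w(Q) = Cw(Q)\Bigl(\frac{|E|}{|Q|}\Bigr)^{\delta},
\end{equation*}
with $\delta = 1-1/s > 0$.

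For the second inequality, use the self-duality of $A_2$: $w\in A_2$ if and only if $w^{-1}\in A_2$, with comparable constant. Applying the first inequality to $w^{-1}$ yields $w^{-1}(E)\leq C'w^{-1}(Q)(|E|/|Q|)^{\delta'}$ for some $\delta'>0$. Combining this with the Cauchy--Schwarz bound $|E|^2\leq w(E)w^{-1}(E)$ and the $A_2$ estimate $w(Q)w^{-1}(Q)\leq [w]_{A_2}|Q|^2$, a short rearrangement gives
\begin{equation*}
\Bigl(\frac{|E|}{|Q|}\Bigr)^{2-\delta'} \leq C''\,\frac{w(E)}{w(Q)},
\end{equation*}
which is the second inequality with $\epsilon = 1/(2-\delta')$.

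The main obstacle is the reverse Hölder step, which is the only place where genuine analysis enters; everything else is Hölder's inequality, Cauchy--Schwarz, and the $A_2$ definition. All constants $s$, $C$, $\delta$, $\beta$, $\epsilon$ produced by this argument depend only on $[w]_{A_2}$ and $n$, as required.
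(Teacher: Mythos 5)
The paper offers no proof of this lemma; it simply cites Garc\'ia-Cuerva--Rubio de Francia and Grafakos, so there is no internal argument to compare against. Your proposal is a correct and complete derivation, and it follows the same route those references take for the first inequality: the reverse H\"older inequality for $A_2$ weights (Gehring's lemma), followed by H\"older applied to $w(E)=\int_E w$. That step is standard and your exponent $\delta = 1 - 1/s = 1/s'$ is right.

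For the second inequality your route is a clean and slightly non-standard shortcut: rather than re-running the $A_\infty$ argument ``in reverse,'' you apply the already-proved first inequality to $w^{-1}\in A_2$, combine it with the Cauchy--Schwarz bound $|E|^2 \leq w(E)\,w^{-1}(E)$ and the $A_2$ bound $w(Q)\,w^{-1}(Q)\leq [w]_{A_2}^2|Q|^2$, and solve for $|E|/|Q|$. The algebra checks out: one gets $(|E|/|Q|)^{2-\delta'} \leq C\,w(E)/w(Q)$ with $0<\delta'<1$, so $2-\delta'>0$ and the $(2-\delta')$-th root is legitimate, giving $\epsilon=1/(2-\delta')>0$. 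This buys you the second inequality essentially for free once the first is in hand, whereas the textbook treatments usually present it as a separate equivalent form of the $A_\infty$ condition with its own proof. Note that this shortcut genuinely exploits $p=2$ (via Cauchy--Schwarz and the $w\leftrightarrow w^{-1}$ symmetry); for general $A_p$ one would fall back on the standard argument, but for this lemma $w\in A_2$ is exactly the hypothesis, so the simplification is fully justified.
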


To prove \eqref{2.23iii} we first construct the sets $\{E_{Q,\nu }\}$ via
the stopping time argument used in \cite{hofmann01}. We include the details
to show how the proof adapts to the weighted case. Let $\mathcal{S}_{1}$ be
the collection of all maximal dyadic cubes $Q^{\prime }\subset Q$ such that 
\begin{equation}
\func{Re}%
\mathchoice {{\setbox0=\hbox{$\displaystyle{\textstyle
-}{\int}$} \vcenter{\hbox{$\textstyle -$}}\kern-.5\wd0}} {{\setbox0=\hbox{$\textstyle{\scriptstyle -}{\int}$} \vcenter{\hbox{$\scriptstyle -$}}\kern-.5\wd0}} {{\setbox0=\hbox{$\scriptstyle{\scriptscriptstyle -}{\int}$} \vcenter{\hbox{$\scriptscriptstyle -$}}\kern-.5\wd0}} {{\setbox0=\hbox{$\scriptscriptstyle{\scriptscriptstyle -}{\int}$} \vcenter{\hbox{$\scriptscriptstyle -$}}\kern-.5\wd0}}%
\!\int_{Q^{\prime }}\nu \cdot \nabla F_{Q,\nu }(y)\,dy\leq \frac{3}{4},
\label{eqn:2.7}
\end{equation}%
and let $\mathcal{B}^{1}=\mathcal{B}_{Q,\nu }^{1}=\bigcup_{\mathcal{S}%
_{1}}Q^{\prime }$. Similarly, let $\mathcal{S}_{2}$ be the collection of
maximal sub-cubes of $Q$ such that 
\begin{equation}
\mathchoice {{\setbox0=\hbox{$\displaystyle{\textstyle
-}{\int}$} \vcenter{\hbox{$\textstyle -$}}\kern-.5\wd0}} {{\setbox0=\hbox{$\textstyle{\scriptstyle -}{\int}$} \vcenter{\hbox{$\scriptstyle -$}}\kern-.5\wd0}} {{\setbox0=\hbox{$\scriptstyle{\scriptscriptstyle -}{\int}$} \vcenter{\hbox{$\scriptscriptstyle -$}}\kern-.5\wd0}} {{\setbox0=\hbox{$\scriptscriptstyle{\scriptscriptstyle -}{\int}$} \vcenter{\hbox{$\scriptscriptstyle -$}}\kern-.5\wd0}}%
\!\int_{Q^{\prime }}\left\vert \nabla F_{Q,\nu }(y)\right\vert \,dy>\frac{1}{%
8\varepsilon },  \label{eqn:2.8}
\end{equation}%
and let $\mathcal{B}^{2}=\mathcal{B}_{Q,\nu }^{2}=\bigcup_{\mathcal{S}%
_{2}}Q^{\prime }$. Set $\mathcal{B}=\mathcal{B}_{Q,\nu }=\mathcal{B}%
^{1}\bigcup \mathcal{B}^{2}$ and $E_{Q,\nu }=Q\backslash \mathcal{B}$. By
Lemma~\ref{lemma-2.1}, 
\begin{equation*}
\left\vert 
\mathchoice {{\setbox0=\hbox{$\displaystyle{\textstyle
-}{\int}$} \vcenter{\hbox{$\textstyle -$}}\kern-.5\wd0}} {{\setbox0=\hbox{$\textstyle{\scriptstyle -}{\int}$} \vcenter{\hbox{$\scriptstyle -$}}\kern-.5\wd0}} {{\setbox0=\hbox{$\scriptstyle{\scriptscriptstyle -}{\int}$} \vcenter{\hbox{$\scriptscriptstyle -$}}\kern-.5\wd0}} {{\setbox0=\hbox{$\scriptscriptstyle{\scriptscriptstyle -}{\int}$} \vcenter{\hbox{$\scriptscriptstyle -$}}\kern-.5\wd0}}%
\!\int_{Q}\left( \nu \cdot \nabla F_{Q,\nu }(y)\,dy-1\right) \right\vert
=\left\vert \nu \left( 
\mathchoice {{\setbox0=\hbox{$\displaystyle{\textstyle
-}{\int}$} \vcenter{\hbox{$\textstyle -$}}\kern-.5\wd0}} {{\setbox0=\hbox{$\textstyle{\scriptstyle -}{\int}$} \vcenter{\hbox{$\scriptstyle -$}}\kern-.5\wd0}} {{\setbox0=\hbox{$\scriptstyle{\scriptscriptstyle -}{\int}$} \vcenter{\hbox{$\scriptscriptstyle -$}}\kern-.5\wd0}} {{\setbox0=\hbox{$\scriptscriptstyle{\scriptscriptstyle -}{\int}$} \vcenter{\hbox{$\scriptscriptstyle -$}}\kern-.5\wd0}}%
\!\int_{Q}(\nabla F_{Q}(y)-\mathbf{1})\,dy\right) \cdot \bar{\nu}\right\vert
\leq C\varepsilon ,
\end{equation*}%
and thus 
\begin{multline}
\left( 1-C\varepsilon \right) \left\vert Q\right\vert \leq \func{Re}%
\int_{Q}\nu \cdot \nabla F_{Q,\nu }(y)\,dy  \label{eqn:gradF1} \\
=\func{Re}\left( \int_{E_{Q,\nu }}\nu \cdot \nabla F_{Q,\nu }(y)\,dy+\int_{%
\mathcal{B}^{1}}\nu \cdot \nabla F_{Q,\nu }(y)\,dy+\int_{\mathcal{B}%
\backslash \mathcal{B}^{1}}\nu \cdot \nabla F_{Q,\nu }(y)\,dy\right) .
\end{multline}%
By (\ref{eqn:2.7}) and the definition of $\mathcal{B}^{1},$%
\begin{equation}
\func{Re}\int_{\mathcal{B}^{1}}\nu \cdot \nabla F_{Q,\nu }(y)\,dy=\func{Re}%
\sum_{Q^{\prime }\in \mathcal{S}_{1}}\int_{Q^{\prime }}\nu \cdot \nabla
F_{Q,\nu }(y)\,dy\leq \frac{3}{4}\sum_{Q^{\prime }\in \mathcal{S}%
_{1}}\left\vert Q^{\prime }\right\vert \leq \frac{3}{4}\left\vert
Q\right\vert .  \label{eqn:B1}
\end{equation}%
By H\"{o}lder's inequality, \eqref{2.23i} and the definition of $A_{2}$
weights, 
\begin{multline}
\func{Re}\int_{\mathcal{B}\backslash \mathcal{B}^{1}}\nu \cdot \nabla
F_{Q,\nu }(y)\,dy \\
\leq \left( \int_{\mathcal{B}^{2}}\left\vert \nabla F_{Q,\nu }(y)\right\vert
^{2}w(y)\,dy\right) ^{1/2}w^{-1}\left( \mathcal{B}^{2}\right) ^{1/2}\leq
Cw\left( Q\right) ^{1/2}w^{-1}\left( \mathcal{B}^{2}\right) ^{1/2} \\
\leq C\left( w^{-1}\left( Q\right) w\left( Q\right) \right) ^{1/2}\left( 
\frac{w^{-1}\left( \mathcal{B}^{2}\right) }{w^{-1}\left( Q\right) }\right)
^{1/2}\leq C\left[ w\right] _{A_{2}}\left\vert Q\right\vert \left( \frac{%
w^{-1}\left( \mathcal{B}^{2}\right) }{w^{-1}\left( Q\right) }\right) ^{1/2}.
\label{eqn:B2}
\end{multline}

Similarly, by the definition of $\mathcal{B}^{2}$ and (\ref{2.23i}), 
\begin{multline}
\left\vert \mathcal{B}^{2}\right\vert \leq \sum_{\mathcal{S}_{2}}\left\vert
Q^{\prime }\right\vert <8\varepsilon \sum_{\mathcal{S}_{2}}\int_{Q^{\prime
}}\left\vert \nabla F_{Q,\nu }(y)\right\vert \,dy=8\varepsilon \int_{%
\mathcal{B}^{2}}\left\vert \nabla F_{Q,\nu }(y)\right\vert \,dy
\label{unwtd-bound} \\
\leq 8\varepsilon \left( \int_{\mathcal{B}^{2}}\left\vert \nabla F_{Q,\nu
}(y)\right\vert ^{2}w(y)\,dy\right) ^{1/2}w^{-1}\left( \mathcal{B}%
^{2}\right) ^{1/2} \\
\leq 8C\varepsilon w\left( Q\right) ^{1/2}w^{-1}\left( Q\right) ^{1/2}\leq
8C\varepsilon \left[ w\right] _{A_{2}}\left\vert Q\right\vert =C\varepsilon
\left\vert Q\right\vert .
\end{multline}%
By Lemma~\ref{lemma:Ainfty} we may assume that $\varepsilon $ is so small
that \eqref{unwtd-bound} implies 
\begin{equation*}
\frac{w^{-1}\left( \mathcal{B}^{2}\right) }{w^{-1}\left( Q\right) }\leq
\left( \frac{1/8}{1+C\left[ w\right] _{A_{2}}}\right) ^{2}.
\end{equation*}%
Combining this estimate with (\ref{eqn:B2}) we obtain 
\begin{equation*}
\func{Re}\int_{\mathcal{B}\backslash \mathcal{B}^{1}}\nu \cdot \nabla
F_{Q,\nu }(y)\,dy\leq \frac{1}{8}\left\vert Q\right\vert ,
\end{equation*}%
and putting this together with inequalities (\ref{eqn:gradF1}) and (\ref%
{eqn:B1}), for $\varepsilon $ small enough we get 
\begin{equation*}
\frac{1}{16}\left\vert Q\right\vert \leq \func{Re}\left( \int_{E_{Q,\nu
}}\nu \cdot \nabla F_{Q,\nu }(y)\,dy\right) .
\end{equation*}%
But then, since $w\in A_{2},$ 
\begin{multline*}
\frac{1}{16}\left\vert Q\right\vert \leq \func{Re}\left( \int_{E_{Q,\nu
}}\nu \cdot \nabla F_{Q,\nu }(y)\,dy\right)  \\
\leq C\left( w\left( Q\right) w^{-1}\left( E_{Q,\nu }\right) \right)
^{1/2}\leq C\left\vert Q\right\vert \left( \frac{w^{-1}\left( E_{Q,\nu
}\right) }{w^{-1}\left( Q\right) }\right) ^{1/2}.
\end{multline*}%
Since $w^{-1}$ is also in $A_{2}$, by Lemma~\ref{lemma:Ainfty} (applied
twice) there exists $\eta >0$ such that 
\begin{equation}
w\left( E_{Q,\nu }\right) \geq \eta ~w\left( Q\right) .  \label{eqn:etaw}
\end{equation}

\bigskip

Now write $\mathcal{B}_{Q,\nu }=Q\backslash E_{Q_{,\nu }}=\bigcup Q_{\nu ,j}$%
, where the $Q_{\nu j}$ are disjoint maximal dyadic cubes. Let $E_{Q,\nu
}^{\ast }=R_{Q}\setminus \bigcup R_{Q_{\nu ,j}}$ be the sawtooth region
above $E_{Q,\nu }$. If $\left( x,t\right) \in E_{Q,\nu }^{\ast }$, and $%
Q_{t}\left( x\right) $ is the biggest dyadic sub-cube of $Q$ containing $x$
with $t\leq \ell \left( Q_{t}\left( x\right) \right) <2t$, then by the
maximality of the cubes in $\mathcal{S}_{1}$ and $\mathcal{S}_{2}$ we have
that $Q_{t}\left( x\right) \not\in \mathcal{B}$; hence 
\begin{equation*}
\frac{3}{4}<\func{Re}\mathchoice{{\setbox0=\hbox{$\displaystyle{\textstyle
-}{\int}$}\vcenter{\hbox{$\textstyle -$}}\kern-.5\wd0}}{{\setbox0=%
\hbox{$\textstyle{\scriptstyle -}{\int}$}\vcenter{\hbox{$\scriptstyle -$}}%
\kern-.5\wd0}}{{\setbox0=\hbox{$\scriptstyle{\scriptscriptstyle -}{\int}$}%
\vcenter{\hbox{$\scriptscriptstyle -$}}\kern-.5\wd0}}{{\setbox0=%
\hbox{$\scriptscriptstyle{\scriptscriptstyle -}{\int}$}\vcenter{\hbox{$%
\scriptscriptstyle -$}}\kern-.5\wd0}}\!\int_{Q_{t}\left( x\right) }\nu \cdot
\nabla F_{Q,\nu }(y)\,dy\leq \mathchoice{{\setbox0=\hbox{$\displaystyle{%
\textstyle -}{\int}$}\vcenter{\hbox{$\textstyle -$}}\kern-.5\wd0}}{{\setbox0=%
\hbox{$\textstyle{\scriptstyle -}{\int}$}\vcenter{\hbox{$\scriptstyle -$}}%
\kern-.5\wd0}}{{\setbox0=\hbox{$\scriptstyle{\scriptscriptstyle -}{\int}$}%
\vcenter{\hbox{$\scriptscriptstyle -$}}\kern-.5\wd0}}{{\setbox0=%
\hbox{$\scriptscriptstyle{\scriptscriptstyle -}{\int}$}\vcenter{\hbox{$%
\scriptscriptstyle -$}}\kern-.5\wd0}}\!\int_{Q_{t}\left( x\right)
}\left\vert \nabla F_{Q,\nu }(y)\right\vert \,dy\leq \frac{1}{8\varepsilon }.
\end{equation*}%
By the definition of $A_{t}$, this implies that 
\begin{equation*}
\frac{3}{4}<\func{Re}\nu \cdot A_{t}\nabla F_{Q,\nu }\left( x\right) \leq
\left\vert A_{t}\nabla F_{Q,\nu }\left( x\right) \right\vert \leq \frac{1}{%
8\varepsilon }.
\end{equation*}%
By the definition of $\Gamma _{\nu }$, if $z\in \Gamma _{\nu }$, then $%
|z|<(1+\varepsilon )|z\cdot \bar{\nu}|$. Hence, for $\varepsilon >0$
sufficiently small, we have that 
\begin{multline*}
|z\cdot A_{t}F_{Q,\nu }(x)|\geq |(z\cdot \bar{\nu})(\nu \cdot A_{t}F_{Q,\nu
}(x))|-|(z-\nu (z\cdot \bar{\nu})\cdot A_{t}F_{Q,\nu }(x)| \\
\geq \frac{3}{4}|z\cdot \bar{\nu}|-\varepsilon |z\cdot \bar{\nu}%
||A_{t}F_{Q,\nu }(x)|\geq \frac{5}{8}|z\cdot \bar{\nu}|\geq \frac{1}{2}|z|.
\end{multline*}

Now fix $\varepsilon $ small; form our index set $\{\nu \}=\{\nu
_{j}\}_{j=1}^{N}$ as described above, and let $E_{Q}=\bigcup_{j}E_{Q,\nu
_{j}}$. Therefore, if we let $z=\gamma _{t}(x)$, then $\gamma _{t}\left( x\right)
\in \Gamma _{\nu _{j}}$ for some $1\leq j\leq N$, and we have 
\begin{equation*}
\frac{1}{4}\left\vert \gamma _{t}(x)\right\vert ^{2}\leq
\sum_{j=1}^{N}\left\vert \gamma _{t}(x)\cdot A_{t}\nabla F_{Q,\nu
_{j}}\left( x\right) \right\vert ^{2},
\end{equation*}%
for all $(x,t)\in E_{Q}^{\ast }$. It follows that 
\begin{align*}
& \dfrac{1}{w(Q)}\iint_{E_{Q}^{\ast }}|\gamma _{t}(x)|^{2}\,\dfrac{dt}{t}%
\,w(x)\,dx \\
& \qquad \leq \dfrac{4}{w(Q)}\sum_{j=1}^{N}\iint_{E_{Q}^{\ast }}|\gamma
_{t}(x)\cdot A_{t}\nabla F_{Q,\nu }(x)|^{2}\,\dfrac{dt}{t}\,w(x)\,dx \\
& \qquad \leq \dfrac{4}{w(Q)}\sum_{j=1}^{N}\int_{Q}\int_{0}^{\ell
(Q)}|\gamma _{t}(x)\cdot A_{t}\nabla F_{Q,\nu }(x)|^{2}\,\dfrac{dt}{t}%
\,w(x)\,dx,
\end{align*}%
where, by \eqref{eqn:etaw}, $w(E_{Q})\geq \eta \,w(Q)$. 
This proves \eqref{eqn:finalstep}; thus we have shown that (\ref{2.23iii})
in Lemma~\ref{lemma-TbRoot-mod} holds, and so have completed the proof of
Theorem \ref{theorem:WKato}.

\bibliographystyle{plain}
\bibliography{kato}

\end{document}